\documentclass[11pt,letterpaper]{amsart}

\usepackage[T1]{fontenc}
\usepackage{lmodern}
\usepackage{amsmath,amssymb,amsthm,mathtools}
\usepackage{aliascnt}
\usepackage{microtype}
\usepackage{enumitem}
\usepackage{booktabs,tabularx,array,float}
\usepackage[hidelinks]{hyperref}

\newcolumntype{Y}{>{\raggedright\arraybackslash}X}
\newcolumntype{P}[1]{>{\raggedright\arraybackslash}p{#1}}

\allowdisplaybreaks

\newtheorem{theorem}{Theorem}[section]
\newaliascnt{proposition}{theorem}
\newtheorem{proposition}[proposition]{Proposition}
\aliascntresetthe{proposition}
\newaliascnt{lemma}{theorem}
\newtheorem{lemma}[lemma]{Lemma}
\aliascntresetthe{lemma}
\newaliascnt{corollary}{theorem}
\newtheorem{corollary}[corollary]{Corollary}
\aliascntresetthe{corollary}
\theoremstyle{definition}
\newaliascnt{definition}{theorem}
\newtheorem{definition}[definition]{Definition}
\aliascntresetthe{definition}
\newaliascnt{example}{theorem}
\newtheorem{example}[example]{Example}
\aliascntresetthe{example}
\theoremstyle{remark}
\newaliascnt{remark}{theorem}
\newtheorem{remark}[remark]{Remark}
\aliascntresetthe{remark}

\usepackage[nameinlink,noabbrev]{cleveref}

\crefname{theorem}{theorem}{theorems}
\crefname{proposition}{proposition}{propositions}
\crefname{lemma}{lemma}{lemmas}
\crefname{corollary}{corollary}{corollaries}
\crefname{definition}{definition}{definitions}
\crefname{example}{example}{examples}

\newcommand{\1}{\mathbf 1}
\newcommand{\eps}{\varepsilon}

\newcommand{\ip}[2]{\langle #1,#2\rangle_Q}

\title{Inverse Theorems of Point--Sphere Incidences over Finite Fields}
\author{Shalender Singh}
\author{Vishnu Priya Singh Parmar}
\date{}
\subjclass[2020]{05B25, 11T23, 05D99, 11B30}
\keywords{finite-field incidence geometry, point--sphere incidences, dense-endpoint classification, lifted-linear sphere systems, coincidence energy, rational normal curves, centered block decompositions, pinned distances}

\hypersetup{
  pdftitle={Inverse Theorems of Point--Sphere Incidences over Finite Fields},
  pdfauthor={Shalender Singh and Vishnu Priya Singh Parmar},
  pdfsubject={Finite-field incidence geometry},
  pdfkeywords={point-sphere incidences, finite fields, Freiman-type inverse theorem, lifted-linear sphere systems, hypergraph universality, metric entropy, rational normal curves, twisted cubic, incidence designs, fixed-radius spheres, radical hyperplanes, block decompositions, pinned distances, pinned dot products}
}

\begin{document}

\begin{abstract}
Let $Q$ be a nondegenerate quadratic form on $\mathbb F_q^d$, with $q$ odd and $d\ge2$.  We prove a two-scale inverse theory for point--sphere incidences.

At the maximal-incidence endpoint, put $\mu=1-I(P,\mathcal S)/(|P||\mathcal S|)$.  If $\mu<(d+1)/(d+2)^2$, then deleting at most a proportion $\sqrt{(d+1)\mu}$ from each side places the retained configuration in one explicit lifted-linear model: the sphere parameters
\[
 \Phi(S_Q(c,r))=(2c,r-Q(c))
\]
lie in an affine space $L$ of dimension at most $d$, and the retained points lie in its common affine-quadric base.  The natural deletion distance satisfies
\[
 \mu\le \operatorname{dist}_{\mathrm{lin}}(P,\mathcal S)
 \le2\sqrt{(d+1)\mu},
\]
and the square-root exponent is optimal.  Under codimension-two nonconcentration, and provided at least two spheres survive, the model reduces to one coaxal pencil.

This classification has a sharp lower-scale boundary.  A polynomial-root chart on one lifted rational normal curve realizes every admissible $d$-uniform hypergraph; consequently any list of individual point-set templates approximating all resulting constant-$K$ systems within $o(q^d)$ edits has size $\exp(\Omega_d(q^d))$.  Exact fixed-nonzero-radius centered direct sums occur even when $|\mathcal S|\ge4q/K^2$, and a prescribed-radius twisted-cubic system in dimension three has a complete rich-pair graph but neither a heavy pair section nor a quantitatively nontrivial centered block decomposition.

At the fixed-radius deviation scale, what remains universally true is sharp adaptive extraction.  If
\[
 \sigma(P,\mathcal S)\ge Kq^{(d-1)/2}\sqrt{|P||\mathcal S|},
 \qquad K^2q^{d-1}|\mathcal S|\ge4|P|,
\]
then at least $K^2|\mathcal S|/4$ ordered pairs have a common $P$-section of size at least $K^2q^{d-1}/(4|\mathcal S|)$; for $d\ge3$ the witness count improves to $K^2q|\mathcal S|/8$.  We also prove mixed-radius profile stability for both signs, a log-free refinement, a cap-free positive-surplus decomposition, and applications to affine hyperplanes, pinned distances, and pinned dot products.
\end{abstract}

\maketitle

\section{Introduction}

Point--sphere incidences over finite fields connect finite geometry, pseudorandom incidence graphs, additive combinatorics, and distance problems.  Let $Q$ be a nondegenerate quadratic form on $\mathbb F_q^d$, where $q$ is odd, let $P\subseteq\mathbb F_q^d$, and let $\mathcal S$ be a family of distinct $Q$-spheres.  We write
\[
 \sigma(P,\mathcal S)=I(P,\mathcal S)-\frac{|P||\mathcal S|}{q}
\]
for the centered incidence count.  The central inverse question is not only whether large incidence forces a structured subset, but whether every near-extremizer is close to an explicit family of models.

The results have three layers, ordered by structural strength.  At the maximal-incidence endpoint we prove a global deletion-stability classification by explicit lifted-linear models.  We next establish a sharp boundary for that classification: at the smaller fixed-radius deviation scale, universal and exact obstruction families preclude a subexponential list of individual edit templates.  Within that broader lower-scale class we prove the strongest universal conclusion available from coincidence energy, namely scale-adaptive extraction and positive-surplus decomposition.

\subsection{Dense-endpoint classification}

The largest possible incidence count is $I(P,\mathcal S)=|P||\mathcal S|$.  Put
\[
 \mu(P,\mathcal S)=1-\frac{I(P,\mathcal S)}{|P||\mathcal S|}.
\]
Thus $\mu$ is the density of missing incidences.  Our first main theorem classifies every system with sufficiently small $\mu$.

Use the lift
\[
 \Phi(S_Q(c,r))=(2c,r-Q(c))\in\mathbb F_q^{d+1}.
\]
For an affine subspace $L\subseteq\mathbb F_q^{d+1}$, let $\mathcal S(L)=\Phi^{-1}(L)$ and let $B(L)$ be the set of points lying on every sphere of $\mathcal S(L)$.  The exact models are completely explicit.  If $L$ has rank $r$, then for an $r$-dimensional subspace $W\le\mathbb F_q^d$, a linear functional $\beta$ on $W$, and suitable $c_0,z,r_0$, one has
\begin{align*}
 \mathcal S(L)
 &=\left\{S_Q\left(c_0+\frac w2,
 r_0+\beta(w)+\langle c_0,w\rangle_Q+\frac14Q(w)\right):w\in W\right\},\\
 B(L)&=S_Q(c_0,r_0)\cap(z+W^{\perp_Q}).
\end{align*}
Rank one is a coaxal pencil; higher ranks are explicit higher-dimensional linear sphere systems with a common affine-quadric base.

\Cref{thm:dense-freiman} proves that if
\[
 0\le\mu<\frac{d+1}{(d+2)^2},
 \qquad
 \delta=\sqrt{(d+1)\mu},
\]
then there is such an $L$, with $\dim L\le d$, for which at least $(1-\delta)|P|$ points lie in $B(L)$ and at least $(1-\delta)|\mathcal S|$ spheres lie in $\mathcal S(L)$.  Hence, after controlled deletion from both sides, the configuration lies in one exact lifted-linear ambient model; for fixed $d$ there are only $q^{O_d(1)}$ possible ambient spaces.

The classification is two-sided in a natural edit metric.  If $\operatorname{dist}_{\mathrm{lin}}(P,\mathcal S)$ denotes the least total deleted proportion needed to place both sides in one lifted-linear model, then
\[
 \mu(P,\mathcal S)
 \le \operatorname{dist}_{\mathrm{lin}}(P,\mathcal S)
 \le 2\sqrt{(d+1)\mu(P,\mathcal S)}.
\]
A split four-dimensional construction shows that the exponent $1/2$ is optimal.  Moreover, if no affine codimension-two flat contains almost all of $P$, the classifying rank is at most one; provided two spheres remain, the model is one coaxal pencil.  These statements form the paper's Freiman-type inverse theorem, in the ambient-model and edit-distance sense familiar from classical inverse theory \cite{GR,GT}.

\subsection{The classification boundary at the fixed scale}

A classification comparable to the dense-endpoint theorem is impossible in the narrower sense of approximation by a subexponential list of individual point-set templates.  We construct $q$ nonzero-radius spheres whose lifted parameters form a rational normal curve and a polynomial-root chart with the following universality property: for every admissible $d$-uniform hypergraph $\mathcal G$ on $\mathbb F_q$, a point set $P_{\mathcal G}$ has incidence graph
\[
 p_A\in S_t\quad\Longleftrightarrow\quad t\in A.
\]
Dense choices have constant fixed-scale parameter and lie in the moderate range.  A Hamming-ball argument shows that any list of individual templates approximating every such $P_{\mathcal G}$ within $o(q^d)$ point edits must contain $\exp(\Omega_d(q^d))$ members.  This is an edit-metric obstruction to an individual-template classification; it does not rule out container statements whose model classes themselves permit arbitrary dense subhypergraphs.

The full root design exhibits a particularly rigid diffuse regime: every sphere pair is rich, no pair section carries more than $O_d(q^{-2})$ of the surplus, no three lifted parameters are coaxal, and no quantitatively nontrivial centered block decomposition exists.  A second obstruction is an exact centered direct sum.  For every nondegenerate $Q$ in $d\ge3$, at one fixed nonzero radius, we construct matched blocks satisfying
\[
 \sigma(A_b,\mathcal S_z)=0\quad(b\ne z),
 \qquad
 \sigma(P,\mathcal S)=\sum_z\sigma(A_z,\mathcal S_z).
\]
In $d\ge4$ these examples occur in the family-only range $|\mathcal S|\ge4q/K^2$ while no hyperplane or pair section is macroscopic.  In dimension three a prescribed-radius twisted-cubic chart realizes the diffuse root design at the sharp point-sensitive boundary.  Thus the lower-scale theory has at least three regimes: coaxal concentration, centered direct sums, and diffuse algebraic evaluation designs.

\subsection{Scale-adaptive inverse theory at the fixed-radius benchmark}

For one fixed nonzero radius, the natural Fourier deviation scale is
\[
 q^{(d-1)/2}\sqrt{|P||\mathcal S|},
\]
whereas arbitrary mixed radii admit the larger scale $q^{d/2}\sqrt{|P||\mathcal S|}$; see \cite{IR,CILLR,NPV,KP}.  Totally singular pencils show that the factor $q^{1/2}$ between these scales is genuine.

At the fixed-radius benchmark our inverse theorem is scale-adaptive.  Write $n=|P|$ and $s=|\mathcal S|$, and suppose
\begin{equation}\label{eq:intro-fixed}
 \sigma(P,\mathcal S)\ge Kq^{(d-1)/2}\sqrt{ns},
 \qquad
 K^2q^{d-1}s\ge4n.
\end{equation}
Then at least $K^2s/4$ ordered pairs satisfy
\begin{equation}\label{eq:intro-adaptive}
 |P\cap S\cap S'|\ge \frac{K^2q^{d-1}}{4s}.
\end{equation}
No upper bound on $s$ is needed.  Every witnessing pair determines a canonical $q$-member coaxal pencil with the same common section, and the section lies in its radical hyperplane.  If $d\ge3$, the pair-codegree bound $|S\cap S'|\le2q^{d-2}$ improves the witness count to $K^2qs/8$.  Under the moderate cap $s\le C_1Kq^{(d-1)/2}$, \eqref{eq:intro-adaptive} becomes
\[
 |P\cap S\cap S'|\ge \frac{Kq^{(d-1)/2}}{4C_1}.
\]
A single high-degree truncation gives a log-free incidence-rich refinement.  Iteration in the positive-excess core gives a cap-free decomposition in which all but a prescribed fraction of the net surplus is carried by disjoint positive-surplus pieces supported on sphere-pair sections.  For completeness, exact sphere-complement duality also gives an omitted-family corollary.  Its section-size threshold is below $1$ throughout the balanced regime $|\mathcal S|\le|\Omega|/2$, and it becomes structurally nontrivial only when the omitted family is an $O(q^{-1})$ fraction of the full sphere system; we therefore do not count it among the main lower-scale inverse results.

\subsection{Relation to previous work and applications}

Koh, Lund, Xu, and Yoo use pairwise sphere intersections in the forward direction, deriving incidence estimates from codegree control \cite{KLXY}.  Senger and Tran obtain forward estimates under higher-order Salem hypotheses \cite{ST}.  Our use of pair intersections is inverse: observed discrepancy is converted into actual rich sections and then into a decomposition or an obstruction to classification.  The following table compares direction, hypotheses, and structural output; it does not assert numerical domination between estimates.

\begin{table}[ht]
\centering
\small
\caption{Comparison with the closest recent forward results.  The exponent $s_0$ in \cite{ST} is unrelated to $|\mathcal S|$.}
\label{tab:comparison}
\begin{tabularx}{\textwidth}{@{}P{0.17\textwidth}Y Y@{}}
\toprule
Work & Hypotheses and quantitative output & Inverse or structural content \\
\midrule
Koh--Lund--Xu--Yoo \cite{KLXY}
& General point sets in sphere regimes where sums of pair intersections can be controlled; forward incidence estimates follow from those codegree bounds.
& Pair intersections are input to an upper bound for $I(P,\mathcal S)$; no inverse classification is asserted. \\
\addlinespace
Senger--Tran \cite{ST}
& $P$ is a $(4,s_0)$-Salem set with $s_0\in(1/4,1/2]$ and $|P|\ll q^{d/(4s_0)}$; then $|\sigma(P,\mathcal S)|\ll q^{d/4}|P|^{1-s_0}|\mathcal S|^{3/4}$.
& Pseudorandomness and the stated size range for $P$ are assumed; no inverse conclusion is sought. \\
\addlinespace
Present work
& Arbitrary $P$ and nondegenerate $Q$; dense-endpoint near-maximal incidence or fixed-scale discrepancy with diagonal dominance.
& Global deletion classification at the dense endpoint; adaptive section extraction, surplus decomposition, and sharp obstruction families at the lower scale. \\
\bottomrule
\end{tabularx}
\end{table}

The exact centered design of all mixed-radius spheres gives a complement-sensitive mixing inequality and a quantitative two-level classification of equality and near-equality profiles for both signs.  The same coincidence principle applies to affine hyperplanes, yielding rich codimension-two intersections.  Exact regularity in pin-generated systems gives deficiency-sensitive pin-count bounds and inverse theorems for deficient pinned distances and pinned dot products.

\subsection{Scope and organization}

The dense-endpoint theorem is a global classification after deletion.  The fixed-scale results are sharp extraction and decomposition theorems, not a classification of every constant-$K$ system.  The rational-normal-curve theorem rules out only subexponential lists of individual edit templates, while the exact block and diffuse constructions identify distinct obstruction mechanisms.  The auxiliary complement-duality statement is structurally informative only in its narrow omitted-family range.  An intrinsic deficit theory inside one prescribed fixed-radius class remains tied to near-extremal Kloosterman and Sali\'e directions.

\Cref{sec:geometry} develops exact designs, pair geometry, the dense-endpoint classification, and complete-pencil examples.  \Cref{sec:inverse} proves the adaptive extraction theorem and its refinements.  \Cref{sec:atomicity} constructs exact fixed-radius blocks and diffuse rational-normal-curve systems.  \Cref{sec:applications} treats affine hyperplanes and pinned configurations.  \Cref{sec:discussion} summarizes the classification boundary, and \cref{app:verification} records the exact-arithmetic verification suite.

\section{Sphere geometry and exact design identities}\label{sec:geometry}

\subsection{Quadratic spheres}

Throughout, $q$ is an odd prime power, $d\ge2$, and $Q:\mathbb F_q^d\to\mathbb F_q$ is a nondegenerate quadratic form.  Its associated symmetric bilinear form is
\[
 \ip{x}{y}=\frac{Q(x+y)-Q(x)-Q(y)}2.
\]
For $c\in\mathbb F_q^d$ and $r\in\mathbb F_q$, define the $Q$-sphere
\[
 S_Q(c,r)=\{x\in\mathbb F_q^d:Q(x-c)=r\}.
\]
Spheres are parametrized by $(c,r)$, and distinct spheres means distinct parameter pairs.  We suppress the subscript $Q$ when no confusion can arise.

For $P\subseteq\mathbb F_q^d$ and a family $\mathcal S$ of distinct spheres, let
\[
 I(P,\mathcal S)=|\{(p,S)\in P\times\mathcal S:p\in S\}|,
 \qquad
 \sigma(P,\mathcal S)=I(P,\mathcal S)-\frac{|P||\mathcal S|}{q}.
\]
For $p\in P$, write $d_{\mathcal S}(p)=|\{S\in\mathcal S:p\in S\}|$.

\subsection{The full sphere design}

Let $\Omega$ be the family of all $q^{d+1}$ parametrized $Q$-spheres, including radius zero.

\begin{proposition}[Centered sphere design]\label{prop:sphere-design}
Let $A$ be the $q^d\times q^{d+1}$ point--sphere incidence matrix, let $J$ be the all-ones $q^d\times q^d$ matrix, and let $J_0$ be the all-ones matrix of the same size as $A$.
\begin{enumerate}[label=\textup{(\arabic*)}]
\item Every point lies on exactly $q^d$ spheres and every pair of distinct points lies on exactly $q^{d-1}$ common spheres.  Hence
\[
 AA^{\mathsf T}=q^{d-1}J+(q^d-q^{d-1})I.
\]
\item If $B=A-q^{-1}J_0$, then
\[
 BB^{\mathsf T}=(q^d-q^{d-1})I,
 \qquad B\1_{\Omega}=0.
\]
Equivalently, for every $P\subseteq\mathbb F_q^d$,
\[
 \sum_{T\in\Omega}
 \left(|P\cap T|-\frac{|P|}{q}\right)^2
 =(q^d-q^{d-1})|P|.
\]
\item For every $P\subseteq\mathbb F_q^d$ and $\mathcal S\subseteq\Omega$,
\begin{equation}\label{eq:complement-sphere}
 |\sigma(P,\mathcal S)|
 \le \sqrt{q^d-q^{d-1}}
 \sqrt{|P||\mathcal S|
 \left(1-\frac{|\mathcal S|}{q^{d+1}}\right)}.
\end{equation}
Equality holds when $P$ is a singleton and $\mathcal S$ is the family of all spheres containing its point.
\end{enumerate}
\end{proposition}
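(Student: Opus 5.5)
The plan is to compute $AA^{\mathsf T}$ entrywise and derive (2) and (3) from it by matrix algebra and Cauchy--Schwarz.

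For (1), fix a point $x$. For each center $c$ there is exactly one radius $r=Q(x-c)$ with $x\in S_Q(c,r)$, so $x$ lies on exactly $q^d$ spheres. For distinct $x,y$, the number of common spheres equals the number of centers $c$ with $Q(x-c)=Q(y-c)$, since the radius is then forced. Expanding with the bilinear form, this condition reads
\[
 Q(x)-Q(y)-2\ip{x-y}{c}=0 .
\]
This is an affine-linear equation in $c$. Its linear part $c\mapsto 2\ip{x-y}{c}$ is a nonzero functional because $x\ne y$, $Q$ is nondegenerate, and $q$ is odd. Hence the solution set is an affine hyperplane with exactly $q^{d-1}$ points. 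Reading these counts as the diagonal and off-diagonal entries of $AA^{\mathsf T}$ gives $AA^{\mathsf T}=q^{d-1}J+(q^d-q^{d-1})I$.

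For (2), first note that every sphere center $c$ contributes exactly $q$ spheres, one for each radius, and these partition $\mathbb F_q^d$. So for each point $x$ the row sum of $A$ restricted to the spheres with center $c$ equals $1$. Summing over all $q^d$ centers, each row of $A$ has sum $q^d$, which is $q^{-1}$ times the $q^{d+1}$ columns; this gives $B\1_\Omega=0$. Next, expand
\[
 BB^{\mathsf T}=AA^{\mathsf T}-q^{-1}AJ_0^{\mathsf T}-q^{-1}J_0A^{\mathsf T}+q^{-2}J_0J_0^{\mathsf T}.
\]
Each row sum of $A$ is $q^d$, so $AJ_0^{\mathsf T}=q^dJ$, and likewise $J_0A^{\mathsf T}=q^dJ$. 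Also $J_0J_0^{\mathsf T}=q^{d+1}J$. The three correction terms therefore combine to $-q^{d-1}J-q^{d-1}J+q^{d-1}J=-q^{d-1}J$. This cancels the $J$ part of $AA^{\mathsf T}$ and leaves $BB^{\mathsf T}=(q^d-q^{d-1})I$. The displayed identity is then $\|B^{\mathsf T}\1_P\|^2=\1_P^{\mathsf T}BB^{\mathsf T}\1_P$, because the $T$-coordinate of $B^{\mathsf T}\1_P$ is $|P\cap T|-|P|/q$.

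For (3), write $\sigma(P,\mathcal S)=\langle B^{\mathsf T}\1_P,\1_{\mathcal S}\rangle$. Since $B\1_\Omega=0$, the vector $v=B^{\mathsf T}\1_P$ is orthogonal to $\1_\Omega$. So $\1_{\mathcal S}$ may be replaced by its centered version $\1_{\mathcal S}-(|\mathcal S|/q^{d+1})\1_\Omega$, whose squared norm is $|\mathcal S|(1-|\mathcal S|/q^{d+1})$. Cauchy--Schwarz together with $\|v\|^2=(q^d-q^{d-1})|P|$ from (2) gives \eqref{eq:complement-sphere}.

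For the equality case, take $P=\{x\}$ and $\mathcal S$ the $q^d$ spheres through $x$. Then $\sigma=q^d-q^{d-1}$. The right-hand side is
\[
 \sqrt{(q^d-q^{d-1})\,q^d(1-q^{-1})}=q^d-q^{d-1},
\]
so equality holds. Equivalently, $v$ takes the value $1-1/q$ on $\mathcal S$ and $-1/q$ off it, so $v$ is a multiple of the centered indicator of $\mathcal S$.

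There is no serious obstacle. The only point needing care is that the common-sphere condition is a genuine hyperplane, which uses nondegeneracy of $Q$ and $q$ odd, together with the bookkeeping of the $J$-terms in (2).
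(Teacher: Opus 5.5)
Your proof is correct and follows essentially the same route as the paper. The shared steps are the center-by-center count and the affine-hyperplane condition $2\ip{x-y}{c}=Q(x)-Q(y)$ for (1), the expansion of $BB^{\mathsf T}$ using the constant row sum $q^d$ for (2), and Cauchy--Schwarz against the centered indicator of $\mathcal S$ for (3), where in the point-star case the two centered vectors coincide. You only write out the $J$-term bookkeeping and the equality computation more explicitly than the paper does.
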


\begin{proof}
Fix $p\in\mathbb F_q^d$.  Each center $c$ determines the unique radius $Q(p-c)$, so $p$ lies on $q^d$ spheres.  If $p\ne p'$, then both points lie on $S(c,r)$ precisely when
\[
 Q(p-c)=Q(p'-c),
\]
equivalently
\[
 2\ip{p'-p}{c}=Q(p')-Q(p).
\]
Nondegeneracy makes this a nontrivial affine-linear equation in $c$, with exactly $q^{d-1}$ solutions.  This proves the first assertion.

Each row of $A$ has sum $q^d$ and $|\Omega|=q^{d+1}$.  Expanding $BB^{\mathsf T}$ cancels the $q^{d-1}J$ term and gives the second assertion, including $B\1_\Omega=0$.

Put $v=B^{\mathsf T}\1_P$.  Then $v\perp\1_\Omega$ and
\[
 \|v\|_2^2=(q^d-q^{d-1})|P|.
\]
Consequently
\[
 \sigma(P,\mathcal S)
 =\left\langle v,\1_{\mathcal S}
 -\frac{|\mathcal S|}{|\Omega|}\1_\Omega\right\rangle,
\]
and Cauchy--Schwarz gives \eqref{eq:complement-sphere}.  In the point-star example the two centered vectors coincide, proving equality.
\end{proof}

\begin{theorem}[Sharp mixed-radius profile stability and equality classification]\label{thm:mixed-profile}
Let $P\subseteq\mathbb F_q^d$ be nonempty, put $n=|P|$, let $\varnothing\ne\mathcal S\subsetneq\Omega$, put $s=|\mathcal S|$, and write
\[
 M=|\Omega|=q^{d+1},
 \qquad
 \alpha=q^d-q^{d-1},
 \qquad
 \Delta=\sqrt{\alpha n s(1-s/M)}.
\]
Suppose $0\le\delta<1$ and
\begin{equation}\label{eq:mixed-near-equality}
 |\sigma(P,\mathcal S)|\ge(1-\delta)\Delta.
\end{equation}
Let $\epsilon=\operatorname{sgn}\sigma(P,\mathcal S)$.  Then the sphere-intersection profile satisfies
\begin{equation}\label{eq:mixed-sphere-profile}
 \sum_{T\in\Omega}
 \left(
 \frac{|P\cap T|-n/q}{\sqrt{\alpha n}}
 -\epsilon\,
 \frac{\1_{\mathcal S}(T)-s/M}{\sqrt{s(1-s/M)}}
 \right)^2
 \le2\delta,
\end{equation}
and the point-degree profile satisfies
\begin{equation}\label{eq:mixed-point-profile}
 \sum_{x\in\mathbb F_q^d}
 \left(
 \frac{\1_P(x)}{\sqrt n}
 -\epsilon\,
 \frac{d_{\mathcal S}(x)-s/q}
 {\sqrt{\alpha s(1-s/M)}}
 \right)^2
 \le2\delta.
\end{equation}
In particular, equality in \eqref{eq:complement-sphere} holds if and only if the following equivalent two-level identities hold:
\begin{align}
 |P\cap T|-\frac nq
 &=\epsilon\sqrt{\frac{\alpha n}{s(1-s/M)}}
 \left(\1_{\mathcal S}(T)-\frac sM\right)
 &&(T\in\Omega),\label{eq:mixed-equality-spheres}\\
 d_{\mathcal S}(x)-\frac sq
 &=\epsilon\sqrt{\frac{\alpha s(1-s/M)}n}\,\1_P(x)
 &&(x\in\mathbb F_q^d).\label{eq:mixed-equality-points}
\end{align}
Thus both positive and negative extremizers, and all near-extremizers at the sharp mixed-radius bound, are classified at the level of their complete sphere-intersection and point-degree profiles.
\end{theorem}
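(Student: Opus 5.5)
The argument is a stability version of the Cauchy--Schwarz step in the proof of \cref{prop:sphere-design}, carried out once in sphere space and once in point space.

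\textbf{Sphere profile.} Put $v=B^{\mathsf T}\1_P$ and $w=\1_{\mathcal S}-(s/M)\1_\Omega$. By \cref{prop:sphere-design}, $\|v\|_2^2=\alpha n$ and $\sigma(P,\mathcal S)=\langle v,w\rangle$. Directly, $\|w\|_2^2=s-s^2/M=s(1-s/M)$, so $\|v\|\,\|w\|=\Delta$, and this is positive because $\varnothing\ne\mathcal S\subsetneq\Omega$. Set $\hat v=v/\|v\|$ and $\hat w=w/\|w\|$. The identity
\[
\|\hat v-\epsilon\hat w\|_2^2=2-2\epsilon\langle\hat v,\hat w\rangle=2-2|\sigma|/\Delta
\]
together with \eqref{eq:mixed-near-equality} gives the bound $\le2\delta$. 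Since the $T$-coordinate of $v$ is $|P\cap T|-n/q$, this is exactly \eqref{eq:mixed-sphere-profile}.

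\textbf{Point profile.} Dualize through $B$. Let $u=\1_P/\sqrt n$ and $z=Bw/\sqrt{\alpha s(1-s/M)}$. We have $Bw=B\1_{\mathcal S}$ because $B\1_\Omega=0$, and the $x$-coordinate of $B\1_{\mathcal S}$ is $d_{\mathcal S}(x)-s/q$, so $z$ is the vector appearing in \eqref{eq:mixed-point-profile}. The relation $BB^{\mathsf T}=\alpha I$ shows that $B^{\mathsf T}/\sqrt\alpha$ is an isometry onto its image, and that $B/\sqrt\alpha$ is the orthogonal projection onto that image followed by the inverse isometry. Hence $\|z\|\le1$, and
\[
\langle u,z\rangle=\frac{\langle \1_P,B w\rangle}{\sqrt{\alpha n s(1-s/M)}}=\frac{\sigma}{\Delta}.
\]
Therefore
\[
\|u-\epsilon z\|^2=1+\|z\|^2-2|\sigma|/\Delta\le2-2(1-\delta)=2\delta.
\]
This step, where $z$ is not a unit vector and one must use $\|z\|\le 1$ (Bessel) rather than equality, is the only point needing care.

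\textbf{Equality.} Equality in \eqref{eq:complement-sphere} means $\delta=0$. The bounds just proved then force \eqref{eq:mixed-equality-spheres}, and they force $u=\epsilon z$, which after clearing normalizations is \eqref{eq:mixed-equality-points}.

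For the converse, each identity forces equality in Cauchy--Schwarz. From \eqref{eq:mixed-equality-spheres}, $v$ is a positive multiple of $\epsilon w$, so $|\langle v,w\rangle|=\|v\|\,\|w\|$. From \eqref{eq:mixed-equality-points}, pairing with $\1_P$ gives $\sigma=\langle \1_P,B\1_{\mathcal S}\rangle=\epsilon\sqrt{\alpha s(1-s/M)/n}\cdot n=\epsilon\Delta$.

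Equivalence of the two identities then follows, since each is equivalent to equality in \eqref{eq:complement-sphere}. Alternatively, apply $B$ or $B^{\mathsf T}$ to pass between them, using $BB^{\mathsf T}=\alpha I$.
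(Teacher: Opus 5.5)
Your proof is correct and is essentially the paper's argument. The sphere profile is the same normalized Cauchy--Schwarz identity, and your exact expansion $\|u-\epsilon z\|^2=1+\|z\|^2-2|\sigma|/\Delta\le2\delta$ is just the expanded form of the paper's step of applying the contraction $B/\sqrt\alpha$ to $\hat v-\epsilon\hat w$. The one divergence is the converse from \eqref{eq:mixed-equality-points}: you sum the identity over $P$ to get $\sigma=\epsilon\Delta$ directly, which is simpler than the paper's argument that $\|Bw\|_2^2=\alpha\|w\|_2^2$ forces $w\in\operatorname{im}B^{\mathsf T}$. That extra fact is, however, exactly what your closing ``apply $B^{\mathsf T}$'' alternative would need, since $B^{\mathsf T}B\ne\alpha I$.
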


\begin{proof}
Let $B$ be the centered incidence matrix from \cref{prop:sphere-design}, put
\[
 u=B^{\mathsf T}\1_P,
 \qquad
 w=\1_{\mathcal S}-\frac sM\1_\Omega.
\]
Then $u\perp\1_\Omega$, $\|u\|_2^2=\alpha n$, $\|w\|_2^2=s(1-s/M)$, and
\[
 \sigma(P,\mathcal S)=\langle u,w\rangle.
\]
After normalizing, \eqref{eq:mixed-near-equality} gives
\[
 \left\|
 \frac u{\sqrt{\alpha n}}
 -\epsilon\frac w{\sqrt{s(1-s/M)}}
 \right\|_2^2
 =2-2\frac{|\sigma(P,\mathcal S)|}{\Delta}
 \le2\delta,
\]
which is \eqref{eq:mixed-sphere-profile}.

Since $BB^{\mathsf T}=\alpha I$, every singular value of $B$ is at most $\sqrt\alpha$.  Apply $B$ to the preceding normalized-vector difference and divide by $\sqrt\alpha$.  The identities
\[
 Bu=BB^{\mathsf T}\1_P=\alpha\1_P,
 \qquad
 Bw=B\1_{\mathcal S}
 =\bigl(d_{\mathcal S}(x)-s/q\bigr)_{x\in\mathbb F_q^d}
\]
give \eqref{eq:mixed-point-profile}.  When $\delta=0$, the normalized vectors are equal up to the sign $\epsilon$, yielding \eqref{eq:mixed-equality-spheres} and \eqref{eq:mixed-equality-points}.  Conversely, \eqref{eq:mixed-equality-spheres} directly gives equality in Cauchy--Schwarz.  If \eqref{eq:mixed-equality-points} holds, then $\|Bw\|_2^2=\alpha\|w\|_2^2$.  Since $BB^{\mathsf T}=\alpha I$, the operator $\alpha^{-1}B^{\mathsf T}B$ is the orthogonal projector onto $\operatorname{im}B^{\mathsf T}$; hence $w\in\operatorname{im}B^{\mathsf T}$ and applying $\alpha^{-1}B^{\mathsf T}$ to \eqref{eq:mixed-equality-points} recovers \eqref{eq:mixed-equality-spheres}.  Thus either identity is equivalent to equality in \eqref{eq:complement-sphere}.
\end{proof}

\begin{remark}\label{rem:classification-scope}
\Cref{thm:mixed-profile} is a genuine stability classification at the exact mixed-radius design norm.  It is intentionally a profile theorem: it does not assert that every two-level profile comes from a unique geometric model.  A geometric Freiman-type classification is proved at the maximal-incidence endpoint in \cref{thm:dense-freiman,cor:model-distance}; at the smaller fixed-radius benchmark, \cref{thm:hypergraph-universality,cor:no-freiman-list} show that no subexponential list of individual edit templates is possible.
\end{remark}

Dropping the final factor in \eqref{eq:complement-sphere} gives the standard mixed-radius bound
\begin{equation}\label{eq:mixed-bound}
 |\sigma(P,\mathcal S)|
 \le \sqrt{1-q^{-1}}\,q^{d/2}\sqrt{|P||\mathcal S|}.
\end{equation}

\begin{proposition}[Fixed nonzero radius]\label{prop:fixed-radius}
Let $r\in\mathbb F_q^\times$, and let $\mathcal S_r$ be a family of distinct spheres of radius $r$.  Then
\begin{equation}\label{eq:fixed-radius-bound}
 |\sigma(P,\mathcal S_r)|
 \le 4q^{(d-1)/2}\sqrt{|P||\mathcal S_r|}.
\end{equation}
The constant $4$ is inessential; in particular the deviation scale is $q^{(d-1)/2}$ for every nondegenerate $Q$.
\end{proposition}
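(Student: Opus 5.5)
The plan is to run the same centered-matrix argument as in \cref{prop:sphere-design}, but restricted to the spheres of the fixed radius $r$, replacing the exact design identity by a Fourier bound on sphere indicators. Let $\Omega_r=\{S(c,r):c\in\mathbb F_q^d\}$; these $q^d$ spheres are indexed by their centers. Since $p\in S(c,r)$ if and only if $p-c\in S(0,r)$, the incidence matrix $A_r$ (points $\times$ centers) is the convolution operator $f\mapsto f*\1_{S_r}$ with $S_r=S(0,r)$. Writing $\mathcal S_r=\{S(c,r):c\in C\}$, we get
\[
 I(P,\mathcal S_r)=\sum_{c\in C}(\1_P*\1_{-S_r})(c).
\]
Here $-S_r=S_r$.

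First I subtract the mean. Since $|S_r|=q^{d-1}+O(q^{(d-1)/2})$ is not exactly $q^{d-1}$, I will center against $|S_r|/q^d$ rather than $1/q$. This gives
\[
 \sigma(P,\mathcal S_r)=\bigl\langle \1_P*(\1_{S_r}-|S_r|q^{-d}),\1_C\bigr\rangle+|P||C|\Bigl(\frac{|S_r|}{q^d}-\frac1q\Bigr).
\]
For the first term I use Plancherel together with the fact that the centered kernel kills the zero frequency. The operator norm of convolution by $\1_{S_r}-|S_r|q^{-d}$ is $\max_{\xi\ne0}|\widehat{\1_{S_r}}(\xi)|$ in the unnormalized convention $\sum_x \1_{S_r}(x)\psi(-\xi\cdot x)$. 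Cauchy--Schwarz then bounds the first term by $\max_{\xi\ne0}|\widehat{\1_{S_r}}(\xi)|\sqrt{|P||C|}$.

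The main step is the sphere Fourier bound $|\widehat{\1_{S_r}}(\xi)|\le 2q^{(d-1)/2}$ for $\xi\ne0$ and $r\ne0$. To prove it, write
\[
 \1_{S_r}(x)=q^{-1}\sum_{t}\psi(t(Q(x)-r)).
\]
The $t=0$ term vanishes at $\xi\ne0$. For $t\neq0$, complete the square after diagonalizing $Q$ and evaluate the resulting Gauss sums, each of modulus $q^{1/2}$. This leaves $q^{-1}q^{d/2}$ times a sum over $t\in\mathbb F_q^\times$ of $\eta(t)^d\psi(-tr-Q^*(\xi)/(4t))$, where $Q^*$ is the dual form. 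For $d$ even this is a Kloosterman sum, and for $d$ odd it is a Sali\'e sum. Both are bounded by $2\sqrt q$ by Weil when $Q^*(\xi)\neq0$, using $r\ne0$. When $Q^*(\xi)=0$ the sum collapses to a Ramanujan or Gauss sum of size at most $\sqrt q$. In every case the bound is $2q^{(d-1)/2}$. This Kloosterman/Sali\'e step, with Weil as black box, is the only non-elementary input and is where I expect the real work to lie.

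Finally, the second term. The same Gauss-sum computation at $\xi=0$ gives $\bigl||S_r|-q^{d-1}\bigr|\le q^{(d-1)/2}$, so the second term is at most $|P||C|q^{(d-1)/2}/q^d$. Since $|P||C|\le q^{2d}$, we have $|P||C|/q^d\le\sqrt{|P||C|}$, so this is at most $q^{(d-1)/2}\sqrt{|P||C|}$. Adding the two contributions gives the constant $3\le4$, which proves \eqref{eq:fixed-radius-bound}.
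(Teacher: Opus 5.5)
Your proposal is correct and follows essentially the paper's argument. Both use the Fourier transform of the level set via Kloosterman/Sali\'e sums and the Weil bound for $\xi\ne0$, Parseval for the centered convolution term, and the $\xi=0$ evaluation together with $|P|,|C|\le q^d$ to replace $|P||C||S_r|/q^d$ by $|P||C|/q$. The only difference is bookkeeping: your exact evaluation $\bigl||S_r|-q^{d-1}\bigr|\le q^{(d-1)/2}$ (where the paper uses $2q^{(d-1)/2}$) gives the constant $3$ instead of $4$, and you also treat the degenerate case $Q^*(\xi)=0$ explicitly.
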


\begin{proof}
Fix a nontrivial additive character $\chi$ of $\mathbb F_q$ and identify the dual space with $\mathbb F_q^d$ through $\langle\cdot,\cdot\rangle_Q$.  For $h:\mathbb F_q^d\to\mathbb C$, use the unnormalized transform
\[
 \widehat h(\xi)=\sum_x h(x)\chi(-\ip{\xi}{x}).
\]
Let $V_r=\{x:Q(x)=r\}$.  Orthogonality and completion of the square give, for $\xi\ne0$,
\begin{align*}
 \widehat{\1_{V_r}}(\xi)
 &=q^{-1}\sum_{a\in\mathbb F_q^\times}\chi(-ar)
   \sum_x\chi\bigl(aQ(x)-\ip{\xi}{x}\bigr)\\
 &=q^{-1}\sum_{a\in\mathbb F_q^\times}\chi\left(-ar-\frac{Q(\xi)}{4a}\right)G_Q(a),
\end{align*}
where $G_Q(a)=\sum_x\chi(aQ(x))$ has modulus $q^{d/2}$ and equals a fixed unit times $\eta(a)^d q^{d/2}$, with $\eta$ the quadratic character.  The remaining sum is a Kloosterman sum when $d$ is even and a Sali\'e sum when $d$ is odd.  Since $r\ne0$, the Weil bound yields
\begin{equation}\label{eq:fixed-fourier}
 \max_{\xi\ne0}|\widehat{\1_{V_r}}(\xi)|
 \le2q^{(d-1)/2}.
\end{equation}
The same calculation at $\xi=0$ gives
\begin{equation}\label{eq:level-size}
 \bigl||V_r|-q^{d-1}\bigr|\le2q^{(d-1)/2}.
\end{equation}
See, for example, \cite[Chapters~5--6]{LN} for the Gauss-sum evaluation and the Weil bounds.

Let $C$ be the set of centers of the spheres in $\mathcal S_r$.  Fourier inversion and Parseval give
\[
 \left|I(P,\mathcal S_r)-\frac{|P||C||V_r|}{q^d}\right|
 \le2q^{(d-1)/2}\sqrt{|P||C|}.
\]
By \eqref{eq:level-size}, the difference between the displayed main term and $|P||C|/q$ is at most
\[
 2q^{-(d+1)/2}|P||C|
 \le2q^{(d-1)/2}\sqrt{|P||C|},
\]
because $|P|,|C|\le q^d$.  Since $|C|=|\mathcal S_r|$, the result follows.
\end{proof}

Thus $q^{(d-1)/2}$ is the natural deviation scale per nonzero radius, whereas \eqref{eq:mixed-bound} shows that unrestricted radii have scale $q^{d/2}$.  The normalization in \eqref{eq:intro-fixed} applies the fixed-radius benchmark as a structural threshold to arbitrary sphere families.

\subsection{Radical hyperplanes and pair codegrees}

\begin{lemma}[Radical hyperplane and pair bound]\label{lem:radical}
Let $S=S(c,r)$ and $S'=S(c',r')$ be distinct spheres.  If $c=c'$, then $S\cap S'=\varnothing$.  If $c\ne c'$, then
\[
 S\cap S'\subseteq H(S,S'),
\]
where
\begin{equation}\label{eq:radical}
 H(S,S')=
 \left\{x\in\mathbb F_q^d:
 2\ip{c'-c}{x}=(r-r')-Q(c)+Q(c')\right\}.
\end{equation}
Thus $|S\cap S'|\le q^{d-1}$.  If $d\ge3$, then
\begin{equation}\label{eq:pair-bound}
 |S\cap S'|\le2q^{d-2}.
\end{equation}
\end{lemma}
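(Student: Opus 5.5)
The plan is to subtract the two defining equations and then count points on the resulting quadric section inside a hyperplane.

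\emph{Concentric and radical-hyperplane cases.} If $c=c'$, distinctness forces $r\ne r'$, and $Q(x-c)$ cannot equal both, so $S\cap S'=\varnothing$. If $c\ne c'$ and $x\in S\cap S'$, expand
\[
 Q(x-c)=Q(x)-2\ip{x}{c}+Q(c)=r,
 \qquad
 Q(x-c')=Q(x)-2\ip{x}{c'}+Q(c')=r'.
\]
Subtracting gives $2\ip{c'-c}{x}=(r-r')-Q(c)+Q(c')$, which is \eqref{eq:radical}. Because $Q$ is nondegenerate and $c'-c\ne0$, the functional $x\mapsto\ip{c'-c}{x}$ is nonzero. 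Hence $H(S,S')$ is an affine hyperplane with $q^{d-1}$ points, and this gives $|S\cap S'|\le q^{d-1}$.

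\emph{Bound for $d\ge3$.} Here $S\cap S'=S\cap H$. Parametrize $H$ as $x=x_0+y$ with $y\in W=(c'-c)^{\perp_Q}$, a $(d-1)$-dimensional subspace. Then $Q(x-c)=r$ becomes
\[
 Q|_W(y)+2\ip{x_0-c}{y}+Q(x_0-c)=r,
\]
a quadratic equation in $d-1\ge2$ variables. The quadratic part $Q|_W$ is nonzero: a nonzero quadratic form in odd characteristic vanishes on a subspace only if that subspace is totally singular, and totally singular subspaces have dimension at most $d/2<d-1$ when $d\ge3$. The standard fact I will use is that a polynomial $f$ of degree $2$ in $m$ variables over $\mathbb F_q$ has at most $2q^{m-1}$ zeros. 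This follows from the Schwartz--Zippel / Ore bound: choose coordinates so that some variable appears with nonzero degree, then fiber over the remaining variables. Applying it with $m=d-1$ gives $|S\cap S'|\le 2q^{d-2}$.

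\emph{Main obstacle.} The delicate step is justifying that the restriction is a genuine, not identically zero, degree-$2$ polynomial. If $Q|_W$ vanished, the equation would be affine-linear in $y$ and could give up to $q^{d-2}$ solutions or, if also constant and satisfied, all of $H$. Even in that degenerate case the bound still holds: a nonconstant affine equation has $q^{d-2}\le2q^{d-2}$ solutions. A constant and satisfied equation would require $W$ to be totally singular, which is excluded by the Witt-index bound $\lfloor d/2\rfloor<d-1$. So the cases split cleanly: a nonzero quadratic part gives at most $2q^{d-2}$ by Ore, and a zero quadratic part is impossible for $d\ge3$.
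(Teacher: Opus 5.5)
Your proposal is correct and follows the paper's proof essentially step for step. You subtract the two sphere equations to get the radical hyperplane, restrict $Q(x-c)-r$ to its $(d-1)$-dimensional direction space $(c'-c)^{\perp_Q}$, use the Witt-index bound $\lfloor d/2\rfloor<d-1$ to show the quadratic part is not identically zero, and finish with Schwartz--Zippel. Your discussion of a vanishing quadratic part in the last paragraph is harmless but unnecessary, since you had already shown that case cannot occur.
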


\begin{proof}
Subtracting $Q(x-c')=r'$ from $Q(x-c)=r$ gives \eqref{eq:radical}.  If $c=c'$ and $r\ne r'$, the two level sets are disjoint.  If $c\ne c'$, nondegeneracy makes \eqref{eq:radical} a genuine affine hyperplane.

For $d\ge3$, restrict $Q(x-c)-r$ to $H(S,S')$, regarded as an affine space of dimension $d-1$.  Its homogeneous quadratic part is the restriction of $Q$ to the direction subspace of $H(S,S')$.  This restriction is not identically zero: otherwise that $(d-1)$-dimensional subspace would be totally singular, exceeding the Witt-index bound $\lfloor d/2\rfloor$.  The restricted polynomial is therefore nonzero of degree at most two, and the Schwartz--Zippel bound gives \eqref{eq:pair-bound}.
\end{proof}

The $d=2$ exception is genuine: in a split plane, two radius-zero spheres can have an entire isotropic line in common.

\begin{lemma}[Canonical coaxal completion]\label{lem:coaxal}
For a sphere $S=S_Q(c,r)$ put
\[
 \Phi(S)=(2c,r-Q(c))\in\mathbb F_q^{d+1}.
\]
If $S_0,S_1$ are distinct spheres, then the $q$ points
\[
 \Phi(S_t)=(1-t)\Phi(S_0)+t\Phi(S_1),
 \qquad t\in\mathbb F_q,
\]
parametrize $q$ distinct spheres.  For every $t\ne u$,
\[
 S_t\cap S_u=S_0\cap S_1.
\]
Thus a sphere pair determines a canonical $q$-member coaxal pencil whose base locus is exactly its common section.
\end{lemma}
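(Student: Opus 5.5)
The plan is to linearize sphere membership through the lift $\Phi$.  For $x\in\mathbb F_q^d$, expanding $Q(x-c)=Q(x)-2\ip{c}{x}+Q(c)$ shows that $x\in S_Q(c,r)$ if and only if
\[
 Q(x)-\ip{2c}{x}-(r-Q(c))=0 .
\]
Write $\Phi(S)=(a,b)$ with $a=2c$ and $b=r-Q(c)$.  Then membership reads $\ip{a}{x}+b=Q(x)$.  For fixed $x$, this condition is affine-linear in the lifted parameter $(a,b)$.

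\textbf{Injectivity.}  The map $\Phi$ is injective, because $q$ is odd: from $(a,b)$ we recover $c=a/2$ and $r=b+Q(a/2)$.  Moreover every point of $\mathbb F_q^{d+1}$ is attained, so $\Phi(S_t)$ defines a genuine sphere $S_t$ for each $t$.  Since $\Phi(S_0)\ne\Phi(S_1)$, the points $(1-t)\Phi(S_0)+t\Phi(S_1)$ are pairwise distinct as $t$ varies.  Hence the $S_t$ are $q$ distinct spheres.

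\textbf{Common sections.}  Fix $x$ and consider the affine function
\[
 f_x(t)=\ip{a_t}{x}+b_t-Q(x)=(1-t)f_x(0)+t f_x(1).
\]
Here $f_x$ is a polynomial of degree at most one in $t$, and $x\in S_t$ exactly when $f_x(t)=0$.  Suppose $x\in S_0\cap S_1$.  Then $f_x(0)=f_x(1)=0$, so $f_x\equiv0$ and $x$ lies on every $S_t$.  Conversely, suppose $x\in S_t\cap S_u$ with $t\ne u$.  Then the affine function $f_x$ vanishes at two distinct points, so $f_x\equiv0$, and in particular $x\in S_0\cap S_1$.  Thus $S_t\cap S_u=S_0\cap S_1$ for every $t\ne u$.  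This identifies the base locus of the pencil as exactly the common section, and by \cref{lem:radical} it lies in the radical hyperplane.

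\textbf{Expected difficulty.}  No step is a serious obstacle.  The only points needing care are the use of $2$ being invertible for injectivity and surjectivity of $\Phi$, and the observation that membership is affine in the lifted parameters.  The rest is the elementary fact that a degree-$\le1$ polynomial vanishing at two points is identically zero.
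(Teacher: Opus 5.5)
Your proof is correct and takes the same approach as the paper. The paper also writes the sphere equation as $Q(x)-2\ip{c}{x}=r-Q(c)$, so that membership is affine-linear in the lift; affine combinations of the two endpoint equations then give the pencil, and the equations of any two distinct members recover both endpoint equations. You are more explicit about why $\Phi$ is a bijection onto $\mathbb F_q^{d+1}$ (this uses that $2$ is invertible), which is the point the paper leaves implicit when it asserts that each $\Phi(S_t)$ is the lift of a genuine sphere.
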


\begin{proof}
The equation of $S_Q(c,r)$ can be written
\[
 Q(x)-2\ip{c}{x}=r-Q(c).
\]
Taking affine combinations of the two endpoint equations gives the sphere with parameter $\Phi(S_t)$.  Since $\Phi(S_0)\ne\Phi(S_1)$, the $q$ parameters are distinct.  A point satisfying the two equations indexed by $t\ne u$ satisfies both endpoint equations, and the converse is immediate.
\end{proof}

\subsection{Freiman-type classification at the dense endpoint}\label{subsec:dense-freiman}

The fixed-scale results of \cref{sec:inverse} begin near the random baseline and extract local common sections.  At the opposite, maximal-incidence endpoint one can classify the entire configuration, up to deletion, by a finite-rank linear system in lifted parameter space.  This is the paper's Freiman-type inverse theorem.

For $y=(u,a)\in\mathbb F_q^d\times\mathbb F_q$ and $x\in\mathbb F_q^d$, put
\begin{equation}\label{eq:lambda-lift}
 \Lambda_x(y)=Q(x)-\ip{x}{u}-a.
\end{equation}
Thus, for every sphere $S$,
\begin{equation}\label{eq:lift-incidence-functional}
 x\in S\quad\Longleftrightarrow\quad \Lambda_x(\Phi(S))=0.
\end{equation}
If $L\subseteq\mathbb F_q^{d+1}$ is an affine subspace, define its sphere system and base locus by
\begin{equation}\label{eq:lift-linear-model}
 \mathcal S(L)=\Phi^{-1}(L),
 \qquad
 B(L)=\{x\in\mathbb F_q^d:\Lambda_x(y)=0\text{ for every }y\in L\}.
\end{equation}
Every point of $B(L)$ lies on every sphere of $\mathcal S(L)$.

\begin{proposition}[Exact lifted-linear models and normal form]\label{prop:linear-model-normal-form}
Let $P\ne\varnothing$ and let $\mathcal S$ be a nonempty family of distinct $Q$-spheres.  Then
\[
 I(P,\mathcal S)=|P||\mathcal S|
\]
if and only if, with $L=\operatorname{aff}\Phi(\mathcal S)$,
\[
 P\subseteq B(L),\qquad \mathcal S\subseteq\mathcal S(L).
\]
Whenever $B(L)\ne\varnothing$, one has $\dim L\le d$, and the model admits the following explicit normal form.  Write $L=y_0+U$, where
\[
 y_0=\Phi(S_Q(c_0,r_0)).
\]
Then there are an $r$-dimensional subspace $W\le\mathbb F_q^d$, a linear functional $\beta:W\to\mathbb F_q$, and $z\in\mathbb F_q^d$, where $r=\dim L$, such that
\begin{align}
 \mathcal S(L)
 &=\left\{
 S_Q\left(c_0+\frac w2,
 r_0+\beta(w)+\ip{c_0}{w}+\frac14Q(w)\right):w\in W
 \right\},\label{eq:linear-system-normal-form}\\
 B(L)
 &=S_Q(c_0,r_0)\cap\bigl(z+W^{\perp_Q}\bigr),
 \qquad
 \ip{z}{w}=-\beta(w)\quad(w\in W).
 \label{eq:base-normal-form}
\end{align}
Conversely, every such choice has the displayed base locus; when that base is nonempty, any nonempty subsets of it and of the sphere system form a complete point--sphere model.  Rank $r=1$ is precisely a coaxal pencil; higher ranks are explicit higher-dimensional linear sphere systems with a common affine-quadric base.
\end{proposition}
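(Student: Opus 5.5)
The whole argument rests on the linearity of the incidence functional in the lift. By \eqref{eq:lambda-lift}, for fixed $x$ the map $y\mapsto\Lambda_x(y)$ is affine-linear on $\mathbb F_q^{d+1}$, with linear part $(u,a)\mapsto-\ip{x}{u}-a$. An affine functional vanishing on a set therefore vanishes on its affine hull.

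\textbf{The equivalence.} Suppose $I(P,\mathcal S)=|P||\mathcal S|$. By \eqref{eq:lift-incidence-functional}, $\Lambda_x$ vanishes on $\Phi(\mathcal S)$ for every $x\in P$, hence on $L=\operatorname{aff}\Phi(\mathcal S)$. This gives $P\subseteq B(L)$. The inclusion $\mathcal S\subseteq\mathcal S(L)$ is tautological. The converse is the remark after \eqref{eq:lift-linear-model}.

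\textbf{The dimension bound.} Write $L=y_0+U$. If $x\in B(L)$, the linear part of $\Lambda_x$ kills $U$. That linear part is the functional $(u,a)\mapsto-\ip{x}{u}-a$, which is nonzero because its $a$-coefficient is $-1$. So $U$ lies in a hyperplane, and $\dim L\le d$.

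Moreover, every $(u,a)\in U$ satisfies $a=-\ip{x}{u}$. Hence the projection $U\to\mathbb F_q^d$, $(u,a)\mapsto u$, is injective. Its image $W'$ has dimension $r$, and $U$ is the graph of a linear map $\gamma:W'\to\mathbb F_q$.

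\textbf{The normal form.} Set $W=W'$ and parametrize $U$ by $w\mapsto(w,\gamma(w))$. Define $\beta$ so that the sphere with parameter $y_0+(w,\gamma(w))$ matches \eqref{eq:linear-system-normal-form}. The center is $c_0+w/2$. Solving $r-Q(c_0+w/2)=r_0-Q(c_0)+\gamma(w)$ gives
\[
r=r_0+\gamma(w)+\ip{c_0}{w}+\tfrac14Q(w).
\]
So $\beta=\gamma$ works, after checking sign conventions. Since $\Phi$ is a bijection from parameters onto $\mathbb F_q^{d+1}$ (because $c\mapsto 2c$ is invertible for odd $q$), this describes all of $\mathcal S(L)$.

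For the base, $x\in B(L)$ iff $\Lambda_x(y_0)=0$ and the linear part of $\Lambda_x$ vanishes on $U$. The first condition is $x\in S_Q(c_0,r_0)$. The second is $\ip{x}{w}=-\beta(w)$ for all $w\in W$. By nondegeneracy of $\ip{\cdot}{\cdot}$, this second condition is an affine subspace $z+W^{\perp_Q}$ for any $z$ with $\ip{z}{\cdot}|_W=-\beta$; such a $z$ exists because the restriction map $\mathbb F_q^d\to W^*$ is surjective. Signs need care here: the linear part is $-\ip{x}{w}-\gamma(w)$, so the condition reads $\ip{x}{w}=-\gamma(w)=-\beta(w)$, consistent with the choice $\beta=\gamma$.

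\textbf{The converse and the rank-one case.} Conversely, given $c_0,r_0,W,\beta$, the parameter set of \eqref{eq:linear-system-normal-form} is the affine space $y_0+\{(w,\beta(w))\}$, and the same computation yields its base. Any nonempty subsets of the base and of the system then give a complete model, because every point of $B(L)$ lies on every sphere of $\mathcal S(L)$. For rank $1$, $L$ is a line through two distinct lifted parameters, and \cref{lem:coaxal} identifies $\mathcal S(L)$ with the canonical coaxal pencil.

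\textbf{Main obstacle.} The only delicate point is bookkeeping of signs and factors of $2$ in \eqref{eq:linear-system-normal-form}. I will verify these by expanding $Q(c_0+w/2)=Q(c_0)+\ip{c_0}{w}+\tfrac14 Q(w)$. Everything else is linear algebra.
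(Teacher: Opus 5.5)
Your proof is correct and follows essentially the same route as the paper. Affine-linearity of $\Lambda_x$ gives the equivalence. The nonvanishing of its linear part (equivalently, injectivity of the projection $U\to\mathbb F_q^d$) gives $\dim L\le d$ and the graph description $U=\{(w,\beta(w))\}$. Expanding $Q(c_0+w/2)$ then yields the normal form and the base locus $S_Q(c_0,r_0)\cap(z+W^{\perp_Q})$. Your remark that $\Phi$ is a bijection for odd $q$ makes explicit a point the paper leaves implicit, namely that this parametrization exhausts $\mathcal S(L)$.
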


\begin{proof}
If every pair in $P\times\mathcal S$ is incident, then for each $x\in P$ the affine-linear functional $\Lambda_x$ vanishes on $\Phi(\mathcal S)$ and hence on its affine span $L$.  Thus $P\subseteq B(L)$.  The converse follows immediately from \eqref{eq:lift-linear-model}.

Suppose $B(L)\ne\varnothing$ and write a direction vector in $U$ as $(w,b)\in\mathbb F_q^d\times\mathbb F_q$.  The projection $U\to\mathbb F_q^d$ is injective: if $(0,b)\in U$ and $x\in B(L)$, then vanishing of $\Lambda_x$ on $y_0+t(0,b)$ for all $t$ forces $b=0$.  Hence $r=\dim U\le d$, the projected space $W$ has dimension $r$, and
\[
 U=\{(w,\beta(w)):w\in W\}
\]
for a unique linear functional $\beta$.  Nondegeneracy of $Q$ gives $z$ with $\ip{z}{w}=-\beta(w)$ for all $w\in W$.

A point $x$ belongs to $B(L)$ exactly when it satisfies the equation corresponding to $y_0$ and
\[
 \ip{x}{w}=-\beta(w)=\ip{z}{w}
 \qquad(w\in W),
\]
which is \eqref{eq:base-normal-form}.  Finally, the lift $y_0+(w,\beta(w))$ has center $c_0+w/2$ and radius
\[
 r_0-Q(c_0)+\beta(w)+Q\left(c_0+\frac w2\right)
 =r_0+\beta(w)+\ip{c_0}{w}+\frac14Q(w),
\]
proving \eqref{eq:linear-system-normal-form}.  The converse is the same calculation in reverse.
\end{proof}

For nonempty $P$ and $\mathcal S$, define the missing-incidence density
\begin{equation}\label{eq:missing-density}
 \mu(P,\mathcal S)=1-\frac{I(P,\mathcal S)}{|P||\mathcal S|}.
\end{equation}
Thus $\mu=0$ is the exact endpoint, while
\[
 \sigma(P,\mathcal S)=(1-\mu-q^{-1})|P||\mathcal S|.
\]

\begin{theorem}[Dense-endpoint classification]\label{thm:dense-freiman}
Let $d\ge2$, let $P\subseteq\mathbb F_q^d$ be nonempty, let $\mathcal S$ be a nonempty family of distinct $Q$-spheres, and put $n=|P|$, $s=|\mathcal S|$, and $\mu=\mu(P,\mathcal S)$.  Let
\[
 0<\theta<\frac1{d+2},\qquad \mu<\theta.
\]
Then there are an affine subspace $L\subseteq\mathbb F_q^{d+1}$ of dimension at most $d$ and subsets $P_0\subseteq P$, $\mathcal S_0\subseteq\mathcal S$ such that
\begin{align}
 P_0&\subseteq B(L),&
 |P_0|&\ge\bigl(1-(d+1)\theta\bigr)n,\label{eq:dense-P0}\\
 \mathcal S_0&\subseteq\mathcal S(L),&
 |\mathcal S_0|&\ge\left(1-\frac\mu\theta\right)s.
 \label{eq:dense-S0}
\end{align}
In particular, every point of $P_0$ lies on every sphere of $\mathcal S_0$, and the retained configuration is one of the explicit rank-$r$ models in \cref{prop:linear-model-normal-form}.

If
\begin{equation}\label{eq:dense-small-mu}
 0<\mu<\frac{d+1}{(d+2)^2}
\end{equation}
and
\[
 \delta=\sqrt{(d+1)\mu},
\]
then one may choose
\begin{equation}\label{eq:dense-symmetric}
 |P_0|\ge(1-\delta)n,
 \qquad
 |\mathcal S_0|\ge(1-\delta)s.
\end{equation}
For $\mu=0$, the same conclusion holds with $P_0=P$, $\mathcal S_0=\mathcal S$, and $\delta=0$.
\end{theorem}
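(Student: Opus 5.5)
The plan is a Markov-type selection on the sphere side, followed by a spanning-set argument in lifted parameter space. The identity \eqref{eq:lift-incidence-functional} turns ``lying on a sphere'' into the vanishing of the affine functional $\Lambda_x$. Vanishing on an affine basis of $L$ then propagates to all of $L$.

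\emph{Step 1 (sphere side).} For $S\in\mathcal S$ let $m(S)=|P\setminus S|$, so that $\sum_S m(S)=\mu ns$. Put
\[
 \mathcal S_0=\{S\in\mathcal S: m(S)\le\theta n\}.
\]
By Markov's inequality at most $\mu s/\theta$ spheres are discarded, which gives $|\mathcal S_0|\ge(1-\mu/\theta)s$. Since $\mu<\theta$, the family $\mathcal S_0$ is nonempty. Set $L=\operatorname{aff}\Phi(\mathcal S_0)$; then $\mathcal S_0\subseteq\mathcal S(L)$ holds automatically.

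\emph{Step 2 (point side and the dimension bound).} Let $k=\dim L\le d+1$, and choose spheres $T_0,\dots,T_k\in\mathcal S_0$ whose lifts form an affine basis of $L$. Put
\[
 P_0=P\cap T_0\cap\dots\cap T_k .
\]
Each $T_i$ misses at most $\theta n$ points of $P$, so
\[
 |P_0|\ge(1-(k+1)\theta)n\ge(1-(d+2)\theta)n>0,
\]
where the last inequality uses $\theta<1/(d+2)$. For $x\in P_0$, the affine-linear functional $\Lambda_x$ vanishes at the affinely spanning points $\Phi(T_i)$, hence on all of $L$, so $x\in B(L)$.

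Thus $B(L)\ne\varnothing$, and \cref{prop:linear-model-normal-form} gives $\dim L\le d$. Now rerun the count with $k\le d$ to obtain $|P_0|\ge(1-(d+1)\theta)n$. The retained configuration $(P_0,\mathcal S_0)$ is complete and lies in the normal form of \cref{prop:linear-model-normal-form}.

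This bootstrap is the one delicate point. The a priori bound $k\le d+1$ costs a factor $d+2$, and that cost is exactly what the hypothesis $\theta<1/(d+2)$ is designed to absorb; the proposition then upgrades the count to the factor $d+1$. I expect this to be the main obstacle.

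\emph{Step 3 (symmetric choice).} For $0<\mu<(d+1)/(d+2)^2$, take
\[
 \theta=\sqrt{\mu/(d+1)} .
\]
This gives $(d+1)\theta=\delta$ and $\mu/\theta=\delta$, which yields \eqref{eq:dense-symmetric}. Two conditions must be checked:
\begin{itemize}[label=\textbullet]
 \item $\theta<1/(d+2)$ is equivalent to $\mu<(d+1)/(d+2)^2$, which is \eqref{eq:dense-small-mu};
 \item $\mu<\theta$ is equivalent to $\sqrt{\mu(d+1)}<1$, which follows from \eqref{eq:dense-small-mu}.
\end{itemize}

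\emph{Case $\mu=0$.} Here $I(P,\mathcal S)=|P||\mathcal S|$, so \cref{prop:linear-model-normal-form} applies directly with $L=\operatorname{aff}\Phi(\mathcal S)$, $P_0=P$ and $\mathcal S_0=\mathcal S$.
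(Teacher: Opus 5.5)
Your proposal is correct and follows the paper's proof essentially step for step: the same Markov selection of good spheres, the same affine-basis intersection with the a priori $(d+2)\theta$ union bound followed by the rerun with $(d+1)\theta$, and the same choice $\theta=\sqrt{\mu/(d+1)}$. The only cosmetic difference is that you get $\dim L\le d$ by citing \cref{prop:linear-model-normal-form}, whereas the paper observes directly that $L$ lies in the affine hyperplane $\{y:\Lambda_x(y)=0\}$ for any $x\in P_0$; these are the same fact.
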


\begin{proof}
Assume first that $\mu>0$.  Call a sphere $S\in\mathcal S$ good when
\[
 |P\setminus S|\le\theta n
\]
and let $\mathcal S_0$ be the good subfamily.  Counting missing incidences by spheres gives
\[
 \theta n\,|\mathcal S\setminus\mathcal S_0|
 \le \mu ns,
\]
which proves \eqref{eq:dense-S0}; in particular $\mathcal S_0\ne\varnothing$ because $\mu<\theta$.

Let $L=\operatorname{aff}\Phi(\mathcal S_0)$ and put $r=\dim L$.  Initially $r\le d+1$.  Choose $r+1$ members $S_0,\ldots,S_r$ whose lifts form an affine basis of $L$, and set
\[
 P_0=P\cap S_0\cap\cdots\cap S_r.
\]
The union bound and goodness give
\[
 |P_0|\ge\bigl(1-(r+1)\theta\bigr)n
 \ge\bigl(1-(d+2)\theta\bigr)n>0.
\]
Fix $x\in P_0$.  The affine-linear functional $\Lambda_x$ vanishes on an affine basis of $L$, hence on all of $L$.  Thus $L$ is contained in the affine hyperplane $\{y:\Lambda_x(y)=0\}$, so $r\le d$.  Repeating the union-bound estimate with this improved dimension gives \eqref{eq:dense-P0}.  The same affine-linearity argument shows $P_0\subseteq B(L)$ and $\mathcal S_0\subseteq\mathcal S(L)$.

Under \eqref{eq:dense-small-mu}, take
\[
 \theta=\sqrt{\frac\mu{d+1}}.
\]
Then $\theta<1/(d+2)$ and $\mu<\theta$, while
\[
 (d+1)\theta=\frac\mu\theta=\sqrt{(d+1)\mu}=\delta.
\]
This proves \eqref{eq:dense-symmetric}.  If $\mu=0$, every incidence is present, and \cref{prop:linear-model-normal-form} applies directly.
\end{proof}

\begin{definition}[Distance from a lifted-linear model]\label{def:linear-distance}
For nonempty $P$ and $\mathcal S$, put
\[
 \operatorname{dist}_{\mathrm{lin}}(P,\mathcal S)
 =\inf_{L:B(L)\ne\varnothing}
 \left(
 \frac{|P\setminus B(L)|}{|P|}
 +\frac{|\mathcal S\setminus\mathcal S(L)|}{|\mathcal S|}
 \right).
\]
\end{definition}

\begin{corollary}[Quantitative model-distance equivalence]\label{cor:model-distance}
Let $\mu=\mu(P,\mathcal S)$.  Then
\begin{equation}\label{eq:model-distance-lower}
 \mu\le\operatorname{dist}_{\mathrm{lin}}(P,\mathcal S).
\end{equation}
If $\mu<(d+1)/(d+2)^2$, then
\begin{equation}\label{eq:model-distance-upper}
 \operatorname{dist}_{\mathrm{lin}}(P,\mathcal S)
 \le2\sqrt{(d+1)\mu}.
\end{equation}
Equivalently, near-maximizers of the sharp endpoint bound
\[
 \sigma(P,\mathcal S)\le(1-q^{-1})|P||\mathcal S|
\]
are, up to an explicit square-root stability loss, exactly the lifted-linear sphere systems of \cref{prop:linear-model-normal-form}.  More explicitly, if
\[
 \sigma(P,\mathcal S)\ge(1-\eta)(1-q^{-1})|P||\mathcal S|,
\]
then $\mu\le\eta(1-q^{-1})$, and whenever this quantity satisfies \eqref{eq:dense-small-mu},
\[
 \operatorname{dist}_{\mathrm{lin}}(P,\mathcal S)
 \le2\sqrt{(d+1)\eta(1-q^{-1})}.
\]
\end{corollary}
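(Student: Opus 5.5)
The plan is to prove the two inequalities separately and then translate the discrepancy hypothesis into a bound on $\mu$.

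\emph{Lower bound \eqref{eq:model-distance-lower}.} Fix any affine $L$ with $B(L)\ne\varnothing$. Put $P_1=P\cap B(L)$ and $\mathcal S_1=\mathcal S\cap\mathcal S(L)$. Every pair in $P_1\times\mathcal S_1$ is incident, because every point of $B(L)$ lies on every sphere of $\mathcal S(L)$. Hence every missing incidence involves either a point of $P\setminus B(L)$ or a sphere of $\mathcal S\setminus\mathcal S(L)$, which gives
\[
 \mu ns\le |P\setminus B(L)|\,s+n\,|\mathcal S\setminus\mathcal S(L)|.
\]
Dividing by $ns$ and taking the infimum over $L$ yields $\mu\le\operatorname{dist}_{\mathrm{lin}}$. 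The degenerate case where $P_1$ or $\mathcal S_1$ is empty needs no separate treatment, since the inequality above is still valid.

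\emph{Upper bound \eqref{eq:model-distance-upper}.} For $\mu=0$ I apply \cref{prop:linear-model-normal-form} with $L=\operatorname{aff}\Phi(\mathcal S)$, which gives distance $0$. For $0<\mu<(d+1)/(d+2)^2$ I invoke \cref{thm:dense-freiman}. It produces $L$ with $P_0\subseteq B(L)$ and $\mathcal S_0\subseteq\mathcal S(L)$, and the retained sets satisfy \eqref{eq:dense-symmetric}. Since $P_0$ is nonempty (as $\delta<1$), we have $B(L)\ne\varnothing$, so $L$ is admissible in \cref{def:linear-distance}. Then
\[
 |P\setminus B(L)|\le|P\setminus P_0|\le\delta n,
\]
and the analogous bound holds for the spheres. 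Summing the two proportions gives $2\delta=2\sqrt{(d+1)\mu}$.

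\emph{Discrepancy form.} From $\sigma=(1-\mu-q^{-1})ns\ge(1-\eta)(1-q^{-1})ns$ I get $\mu\le\eta(1-q^{-1})$ by rearranging. Monotonicity of $\mu\mapsto2\sqrt{(d+1)\mu}$ then gives the final bound whenever $\eta(1-q^{-1})$ lies below the threshold. The "equivalently" phrasing follows by combining the two-sided estimate with the identity between $\sigma$ and $\mu$.

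There is no real obstacle here. The only point needing care is checking that the $L$ supplied by the theorem has a nonempty base, so that it is admissible in the infimum, and this follows from $P_0\ne\varnothing$.
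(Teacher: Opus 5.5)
Your proposal is correct and takes essentially the paper's route. The lower bound comes from the complete subconfiguration $(P\cap B(L))\times(\mathcal S\cap\mathcal S(L))$: the paper writes this as $(1-a)(1-b)\ge 1-a-b$, which is the same as your union bound on missing incidences. The upper bound is read off from \cref{thm:dense-freiman}; your separate $\mu=0$ case and your check that $B(L)\ne\varnothing$ via $P_0\ne\varnothing$ (using $\delta<(d+1)/(d+2)<1$) supply details the paper leaves implicit.
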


\begin{proof}
For an affine $L$ in \cref{def:linear-distance}, set
\[
 a=\frac{|P\setminus B(L)|}{|P|},
 \qquad
 b=\frac{|\mathcal S\setminus\mathcal S(L)|}{|\mathcal S|}.
\]
The complete subconfiguration $(P\cap B(L))\times(\mathcal S\cap\mathcal S(L))$ gives
\[
 I(P,\mathcal S)\ge(1-a)(1-b)|P||\mathcal S|
 \ge(1-a-b)|P||\mathcal S|.
\]
Hence $\mu\le a+b$, and taking the infimum proves \eqref{eq:model-distance-lower}.  The upper bound follows from \cref{thm:dense-freiman}.
\end{proof}

\begin{proposition}[The square-root deletion loss is sharp]\label{prop:dense-sqrt-sharp}
There are dense-endpoint systems in split dimension four for which
\[
 \mu(P,\mathcal S)\asymp q^{-1}
 \qquad\text{but}\qquad
 \operatorname{dist}_{\mathrm{lin}}(P,\mathcal S)\gg q^{-1/2}.
\]
Consequently the exponent $1/2$ in \eqref{eq:model-distance-upper} cannot be improved in general.
\end{proposition}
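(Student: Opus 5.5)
The plan is to build an explicit two-model configuration. It consists of a large complete block together with a small ``competing'' block of points and a small competing block of spheres. The only missing incidences lie between the two small blocks. Their relative sizes will both be about $q^{-1/2}$, so $\mu\asymp q^{-1/2}\cdot q^{-1/2}=q^{-1}$. Any single lifted-linear model, however, will have to discard essentially all of one of the two small blocks, so $\operatorname{dist}_{\mathrm{lin}}\gg q^{-1/2}$. Split dimension four is used because there a rank-one base locus $B(L)=S_Q(c_0,r_0)\cap H$ can degenerate into a union of two totally singular affine planes. Each of those planes has $q^2$ points and is itself the base of a rank-two system with totally singular $W$, in the normal form of \cref{prop:linear-model-normal-form}.

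\emph{Construction.} Take $Q=x_1x_2+x_3x_4$. Let $\Pi_1,\Pi_2$ be two affine totally singular planes meeting in a line $\ell$, both contained in one sphere $V$ and one hyperplane $H$, so that $V\cap H=\Pi_1\cup\Pi_2$. This is achieved by arranging $Q$ restricted to $H-c_0$ to be a hyperbolic-plane form of rank two. Let $\mathcal S_1$ be the $q$-member coaxal pencil with base $V\cap H$; this is the rank-one model of \cref{lem:coaxal} and \cref{prop:linear-model-normal-form}. Let $\mathcal S_2$ be a set of $\lfloor q^{1/2}\rfloor$ spheres, each containing $\Pi_1$ but not $\Pi_2$. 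Such spheres come from the rank-two system with $W=\Pi_1-\Pi_1$ totally singular, whose base is $\Pi_1$, after discarding members that contain $\Pi_2$. Put $P_1=\Pi_1$ and let $P_2\subseteq\Pi_2\setminus\ell$ have size $\lfloor q^{3/2}\rfloor$. Finally take $P=P_1\cup P_2$ and $\mathcal S=\mathcal S_1\cup\mathcal S_2$.

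\emph{Computing $\mu$.} Every point of $P$ lies on every sphere of $\mathcal S_1$, and every point of $P_1$ lies on every sphere of $\mathcal S_2$. On a totally singular affine plane, $Q(x-c)-r$ is affine-linear, so a sphere of $\mathcal S_2$ meets $\Pi_2$ in a line, in the empty set, or in all of $\Pi_2$; the last case is excluded by construction. The missing incidences therefore number between $|P_2||\mathcal S_2|-q|\mathcal S_2|$ and $|P_2||\mathcal S_2|$. Dividing by $|P||\mathcal S|\asymp q^2\cdot q$ gives $\mu\asymp q^{3/2}q^{1/2}/q^3=q^{-1}$.

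\emph{Lower bound on $\operatorname{dist}_{\mathrm{lin}}$.} Fix any $L$ with $B(L)\ne\varnothing$. There are two cases.
\begin{enumerate}[label=\textup{(\roman*)}]
\item Suppose $\mathcal S(L)$ contains some $S\in\mathcal S_2$. Then $B(L)\subseteq S$, so $|P_2\cap B(L)|\le|S\cap\Pi_2|\le q$. The point deletion cost is then at least $(|P_2|-q)/|P|\gg q^{3/2}/q^2=q^{-1/2}$.
\item Otherwise all of $\mathcal S_2$ is deleted, at cost $|\mathcal S_2|/|\mathcal S|\asymp q^{1/2}/q=q^{-1/2}$.
\end{enumerate}
In either case the deletion cost is $\gg q^{-1/2}$, which gives the proposition.

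\emph{Main obstacle.} I expect the delicate step to be the explicit algebra of the construction. One must exhibit $c_0,r_0,H$ with $V\cap H$ exactly a union of two totally singular planes, and check that the rank-two system attached to $\Pi_1$ contains at least $q^{1/2}$ spheres not containing $\Pi_2$. I would first try centers on a line transverse to $H$ and verify that the relevant quadratic restriction splits as $x_1x_2$. The remaining inequalities are routine counting, mirroring the proof of \cref{cor:model-distance}.
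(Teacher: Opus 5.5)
Your configuration and your two-case deletion argument are the paper's proof, written coordinate-free. In the paper, with $Q=x_1x_2+x_3x_4$:
\begin{itemize}
\item the two planes are $\Pi_1=\{x_2=x_4=0\}$ and $\Pi_2=\{x_2=x_3=0\}$;
\item your pencil is $\mathcal S_1=\mathcal S(L_0)$ with $L_0=\{(te_1,0)\}$, whose base is $\{x_2=0,\ x_3x_4=0\}$;
\item your rank-two system is $\mathcal S(L_1)$ with $L_1=\{(te_1+ue_3,0)\}$, whose base is $\Pi_1$;
\item the lower bound on $\operatorname{dist}_{\mathrm{lin}}$ is your dichotomy: a complete product $B(L)\times\mathcal S(L)$ cannot contain both a point of $P_2$ and a sphere of $\mathcal S_2$.
\end{itemize}

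\textbf{The gap is the step you defer.} The only unfinished piece is the existence of $V$ and $H$, and the route you propose for it cannot work. A pencil with center-difference direction $w$ has radical hyperplane $H$ with direction space $w^{\perp_Q}$. Suppose the center line were transverse to $H$, i.e.\ $Q(w)\ne0$. Then $w^{\perp_Q}$ is a nondegenerate ternary space, of Witt index one. Any affine plane $a+U\subseteq V\cap H$ would force $Q|_U\equiv0$ (look at the $s^2$-coefficient of $Q(a-c_0+su)$), so $U$ would be a two-dimensional totally singular subspace of $w^{\perp_Q}$, which is impossible. Hence you need $w$ isotropic. Then $w\in w^{\perp_Q}$, so the center line is parallel to $H$, not transverse to it. Concretely, take $V=S_Q(0,0)$ and $H=\{x_2=0\}$; the pencil is $\{S_Q(te_1/2,0)\}$, with centers on $\mathbb F_q e_1\subseteq H$.

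\textbf{Your second check needs no computation.} The lifts in $L_1$ of spheres containing $\Pi_2$ are cut out by the affine-linear conditions $\Lambda_x=0$ for $x\in\Pi_2$. This affine subspace is proper because $B(L_1)=\Pi_1\not\supseteq\Pi_2$. So at most $q$ of the $q^2$ members contain $\Pi_2$. Explicitly, $\Lambda_x(te_1+ue_3,0)=-ux_4/2$ on $\Pi_2$, so these are exactly the members with $u=0$.

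\textbf{A small sharpening.} Each $S\in\mathcal S_2$ contains $\ell$ and meets $\Pi_2$ in a proper flat, so $S\cap\Pi_2=\ell$. Thus the missing incidences are exactly $P_2\times\mathcal S_2$, as in the paper, not merely up to the $q|\mathcal S_2|$ error you allow.
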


\begin{proof}
Use coordinates in which
\[
 Q(x)=x_1x_2+x_3x_4
\]
on $\mathbb F_q^4$, and let $e_1,e_3$ be the first and third coordinate vectors.  In lifted parameter space put
\[
 L_0=\{(te_1,0):t\in\mathbb F_q\},
 \qquad
 L_1=\{(te_1+ue_3,0):t,u\in\mathbb F_q\}.
\]
Then $L_0\subset L_1$ and a direct use of \eqref{eq:lambda-lift} gives
\begin{align*}
 B(L_0)&=\{x_2=0,\ x_3x_4=0\},\\
 B(L_1)&=\{x_2=x_4=0\}.
\end{align*}
Let $P_0=B(L_1)$, so $|P_0|=q^2$, and choose
\[
 P_1\subseteq\{x_2=x_3=0,\ x_4\ne0\}
\]
of cardinality $a=\lfloor q^{3/2}\rfloor$.  Let $\mathcal S_0=\mathcal S(L_0)$, so $|\mathcal S_0|=q$, and choose
\[
 \mathcal S_1\subseteq\mathcal S(L_1)\setminus\mathcal S(L_0)
\]
of cardinality $b=\lfloor q^{1/2}\rfloor$.  Put
\[
 P=P_0\sqcup P_1,
 \qquad
 \mathcal S=\mathcal S_0\sqcup\mathcal S_1.
\]
Every point of $P_0$ lies on every sphere of $\mathcal S$.  Every point of $P_1$ lies on every sphere of $\mathcal S_0$ and on no sphere of $\mathcal S_1$: indeed, if the lift is $(te_1+ue_3,0)$ with $u\ne0$, then for $x\in P_1$,
\[
 \Lambda_x(te_1+ue_3,0)=-\frac{u x_4}{2}\ne0.
\]
Thus the missing incidence graph is exactly the rectangle $P_1\times\mathcal S_1$, and
\[
 \mu(P,\mathcal S)
 =\frac{ab}{(q^2+a)(q+b)}\asymp q^{-1}.
\]

For any affine $L$ with $B(L)\ne\varnothing$, the complete product $B(L)\times\mathcal S(L)$ cannot contain both a point of $P_1$ and a sphere of $\mathcal S_1$.  Hence either $P_1\cap B(L)=\varnothing$ or $\mathcal S_1\cap\mathcal S(L)=\varnothing$.  Therefore
\[
 \operatorname{dist}_{\mathrm{lin}}(P,\mathcal S)
 \ge\min\left\{\frac a{q^2+a},\frac b{q+b}\right\}
 \asymp q^{-1/2},
\]
which proves the claim.
\end{proof}

\begin{corollary}[Coaxal stability under codimension-two nonconcentration]\label{cor:dense-coaxal}
Assume the hypotheses of \cref{thm:dense-freiman}, use the symmetric conclusion \eqref{eq:dense-symmetric}, and suppose that for some $\gamma>\delta$,
\begin{equation}\label{eq:codim-two-nonconcentration}
 |P\cap V|\le(1-\gamma)|P|
\end{equation}
for every affine flat $V\subseteq\mathbb F_q^d$ of codimension two.  Then the classifying space $L$ has dimension at most one.  If $(1-\delta)|\mathcal S|\ge2$, then $\dim L=1$: after deleting at most a $\delta$-fraction from each side, the remaining spheres lie in one coaxal pencil and the remaining points lie in its common sphere-pair section.
\end{corollary}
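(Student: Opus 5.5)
Our plan is to read off the dimension of $L$ from the structure of its base locus, using the normal form of \cref{prop:linear-model-normal-form}.

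By \cref{thm:dense-freiman} with the symmetric choice \eqref{eq:dense-symmetric}, we obtain an affine $L$ with $P_0\subseteq B(L)$ and $|P_0|\ge(1-\delta)|P|$. Since $P_0\ne\varnothing$, the base $B(L)$ is nonempty. Hence \cref{prop:linear-model-normal-form} applies: with $r=\dim L$,
\[
 B(L)=S_Q(c_0,r_0)\cap\bigl(z+W^{\perp_Q}\bigr),\qquad \dim W=r .
\]
In particular $B(L)$ is contained in the affine flat $z+W^{\perp_Q}$, which has codimension $r$ by nondegeneracy.

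Now suppose $r\ge2$. Then $z+W^{\perp_Q}$ has codimension at least two, so it is contained in some affine flat $V$ of codimension exactly two. For that $V$ we get
\[
 |P\cap V|\ge|P_0|\ge(1-\delta)|P|>(1-\gamma)|P|,
\]
because $\gamma>\delta$. This contradicts \eqref{eq:codim-two-nonconcentration}, so $\dim L\le1$.

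It remains to show $\dim L=1$ when $(1-\delta)|\mathcal S|\ge2$. In that case $\mathcal S_0$ contains two distinct spheres. Distinct spheres have distinct lifts, because $\Phi$ is injective: $c$ is recovered from $2c$, and then $r$ from $r-Q(c)$. Recall that $L$ was constructed as $\operatorname{aff}\Phi(\mathcal S_0)$ in the proof of \cref{thm:dense-freiman}, and it contains two distinct points, so $\dim L\ge1$ and hence $\dim L=1$. Writing $L$ as the line through $\Phi(S_0)$ and $\Phi(S_1)$ for two retained spheres, \cref{lem:coaxal} identifies $\mathcal S(L)$ with the canonical coaxal pencil of that pair. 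That lemma also gives $B(L)=S_0\cap S_1$, the common sphere-pair section. So after deletion the retained spheres lie in one coaxal pencil and the retained points lie in its base section.

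The only points needing care are the following.
\begin{itemize}
 \item The affine $L$ must be the one from the construction, so that it is spanned by the lifts of $\mathcal S_0$.
 \item The codimension count for $W^{\perp_Q}$ uses nondegeneracy of $\langle\cdot,\cdot\rangle_Q$.
 \item Enlarging a flat of codimension $\ge2$ to one of codimension exactly two is harmless. If $r>d-2$ would make this awkward, note that $r\le d$ always holds, and any flat of codimension $r\ge2$ sits inside a codimension-two flat as long as $d\ge2$; for $d=2$ the flat has codimension two already, or is a point inside one.
\end{itemize}
This enlargement is the main (mild) obstacle.
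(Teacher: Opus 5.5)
Your proposal is correct and follows essentially the same argument as the paper. The normal form places $B(L)$ in the codimension-$r$ flat $z+W^{\perp_Q}$, which for $r\ge2$ lies in a codimension-two flat and so contradicts the nonconcentration hypothesis. Two retained spheres with distinct lifts then force $\dim L=1$, and \cref{lem:coaxal} identifies the model as a coaxal pencil. Your extra checks (injectivity of $\Phi$ and $B(L)=S_0\cap S_1$) only make explicit what the paper's proof leaves implicit.
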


\begin{proof}
If $r=\dim L\ge2$, \eqref{eq:base-normal-form} places $B(L)$ inside an affine flat of codimension $r$, hence inside a codimension-two affine flat.  But $P_0\subseteq B(L)$ and $|P_0|\ge(1-\delta)|P|>(1-\gamma)|P|$, contradicting \eqref{eq:codim-two-nonconcentration}.  Thus $r\le1$.  If $\mathcal S_0$ contains at least two distinct spheres, then its affine span has dimension one, and \cref{lem:coaxal} identifies the model as a coaxal pencil.
\end{proof}

\begin{remark}[Why this is a classification theorem]\label{rem:dense-level3}
The conclusion is global rather than extractive.  It gives a complete list of exact ambient models, parametrized by affine subspaces $L\subseteq\mathbb F_q^{d+1}$ with nonempty base, proves that every dense-endpoint near-extremizer becomes a member of this list after deleting an explicit fraction from each side, and proves a converse estimate in the same edit metric.  For fixed $d$ there are only $q^{O_d(1)}$ possible ambient spaces $L$.  Arbitrary subsets of $B(L)$ and $\mathcal S(L)$ are allowed, just as a Freiman theorem places a set inside a structured ambient progression rather than prescribing every element of the set.
\end{remark}

\subsection{Affine singular pencils}

An affine flat $A=a+W$ has totally singular direction if $Q(w)=0$ for every $w\in W$.  Polarization gives $W\subseteq W^{\perp_Q}$.  Define the affine singular pencil through $A$ by
\begin{equation}\label{eq:affine-pencil}
 \mathcal P(A)=
 \{S_Q(c,Q(a-c)):c-a\in W^{\perp_Q}\}.
\end{equation}
If $x=a+w\in A$ and $c-a\in W^{\perp_Q}$, then
\[
 Q(x-c)=Q(w-(c-a))=Q(c-a)=Q(a-c),
\]
so every member of $\mathcal P(A)$ contains all of $A$.

\begin{example}[One pencil and the adaptive scale]\label{ex:one-pencil}
Let $P\subseteq A$ have $n$ points and let $\mathcal S\subseteq\mathcal P(A)$ have $s$ members.  Then
\[
 I(P,\mathcal S)=ns,
 \qquad
 \sigma(P,\mathcal S)=(1-q^{-1})ns.
\]
If $K$ is the exact parameter in \eqref{eq:intro-fixed}, then
\begin{equation}\label{eq:pencil-sharp}
 \frac{K^2q^{d-1}}{s}=(1-q^{-1})^2n.
\end{equation}
Thus the scale $K^2q^{d-1}/s$ in \cref{cor:fixed-adaptive} is optimal up to an absolute constant.

When $d\ge3$, nondegenerate quadratic spaces over $\mathbb F_q$ are isotropic, so one may take $W$ to be a singular line.  The nonzero-radius part of such a pencil has $\gg q^{d-1}$ members, and every nonempty fiber at an attained nonzero radius has $\asymp_d q^{d-2}$ members.  Thus the mixed-radius scale $q^{d/2}$ is necessary, while fixed-radius slices attain the $q^{(d-1)/2}$ scale up to constants.

If $d=2m+1$ and $W$ is maximal totally singular, then $W^{\perp_Q}/W$ is a nondegenerate one-dimensional quadratic space.  Hence $(q-1)q^m$ members of the pencil have nonzero radius.  Choosing admissible integers $n\le q^m$ and $s\le(q-1)q^m$ with $s\asymp C_1^2n$ gives
\[
 \frac{Kq^{(d-1)/2}}{C_1}\asymp n,
\]
so the $K/C_1$ dependence in the moderate theorem is optimal throughout the range realized by the pencil.

If $d=2m$ and $Q$ is split, a maximal totally singular $W$ satisfies $W^{\perp_Q}=W$.  All members of the corresponding maximal pencil then have radius zero.  Taking $P=W$ and $s\asymp q^m$ gives $K\asymp q^{1/2}$ and $Kq^{(d-1)/2}\asymp|P|$, including the split planar case.  The fixed-nonzero-radius block examples of \cref{sec:atomicity} show that radius zero is not responsible for the failure of proportional concentration in dimensions at least four.
\end{example}

\section{Scale-adaptive inverse theorems}\label{sec:inverse}

\subsection{Exact energy extraction}

For $P$ and $\mathcal S$, define the point-degree energy
\[
 E(P,\mathcal S)=\sum_{p\in P}d_{\mathcal S}(p)^2.
\]
Expanding the square gives
\begin{equation}\label{eq:energy-decomp}
 E(P,\mathcal S)=I(P,\mathcal S)+E_{\mathrm{off}}(P,\mathcal S),
\end{equation}
where
\[
 E_{\mathrm{off}}(P,\mathcal S)=
 \sum_{\substack{S,S'\in\mathcal S\\S\ne S'}}
 |P\cap S\cap S'|.
\]

\begin{theorem}[Exact scale-adaptive extraction]\label{thm:exact-adaptive}
Let $P\subseteq\mathbb F_q^d$ be nonempty and let $\mathcal S$ be a nonempty family of distinct spheres.  Put
\[
 n=|P|,\qquad s=|\mathcal S|,\qquad I=I(P,\mathcal S),
 \qquad \mathcal E_0=\frac{I^2}{n}-I.
\]
Assume $\mathcal E_0>0$, and let $0<\eta<1$.  Define
\[
 \lambda_\eta=\eta\frac{\mathcal E_0}{s^2}
\]
and let $\mathcal R_\eta$ be the set of ordered pairs $(S,S')\in\mathcal S^2$, $S\ne S'$, with
\[
 |P\cap S\cap S'|\ge\lambda_\eta.
\]
Then
\begin{equation}\label{eq:exact-rich-energy}
 \sum_{(S,S')\in\mathcal R_\eta}|P\cap S\cap S'|
 \ge(1-\eta)\mathcal E_0.
\end{equation}
Every pair in $\mathcal R_\eta$ is nonconcentric, generates a canonical coaxal pencil whose base locus contains at least $\lambda_\eta$ points of $P$, and has a radical hyperplane containing at least the same number.  Moreover,
\begin{equation}\label{eq:exact-rich-count}
 |\mathcal R_\eta|\ge
 \frac{(1-\eta)\mathcal E_0}{q^{d-1}},
\end{equation}
and, if $d\ge3$,
\begin{equation}\label{eq:exact-rich-count-d3}
 |\mathcal R_\eta|\ge
 \frac{(1-\eta)\mathcal E_0}{2q^{d-2}}.
\end{equation}
\end{theorem}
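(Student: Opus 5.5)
The plan is to combine Cauchy--Schwarz on point degrees with the energy decomposition \eqref{eq:energy-decomp}, and then discard the poor pairs.

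\emph{Step 1: lower bound for the off-diagonal energy.} Since $\sum_{p\in P}d_{\mathcal S}(p)=I$, Cauchy--Schwarz gives
\[
 E(P,\mathcal S)\ge \frac{I^2}{n}.
\]
Combined with \eqref{eq:energy-decomp} this yields
\[
 E_{\mathrm{off}}(P,\mathcal S)=E(P,\mathcal S)-I\ge \mathcal E_0.
\]

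\emph{Step 2: remove the poor pairs.} There are at most $s(s-1)<s^2$ ordered pairs $(S,S')$ with $S\ne S'$. The pairs outside $\mathcal R_\eta$ each contribute less than $\lambda_\eta$, so together they contribute less than
\[
 s^2\lambda_\eta=\eta\,\mathcal E_0.
\]
Subtracting this from $E_{\mathrm{off}}\ge\mathcal E_0$ gives \eqref{eq:exact-rich-energy}.

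\emph{Step 3: geometric properties of rich pairs.} Each $(S,S')\in\mathcal R_\eta$ satisfies $|P\cap S\cap S'|\ge\lambda_\eta>0$, since $\mathcal E_0>0$. So $S\cap S'\neq\varnothing$. By \cref{lem:radical}, concentric distinct spheres are disjoint, so the pair is nonconcentric. The same lemma places $S\cap S'$ inside the radical hyperplane $H(S,S')$. By \cref{lem:coaxal}, the canonical coaxal pencil generated by $S,S'$ has base locus exactly $S\cap S'$. Therefore both the base locus and the radical hyperplane contain at least $\lambda_\eta$ points of $P$.

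\emph{Step 4: counting rich pairs.} Every summand in \eqref{eq:exact-rich-energy} is at most $|S\cap S'|$, which is at most $q^{d-1}$ by \cref{lem:radical}. Dividing \eqref{eq:exact-rich-energy} by this bound gives \eqref{eq:exact-rich-count}. For $d\ge3$ the sharper bound \eqref{eq:pair-bound}, $|S\cap S'|\le2q^{d-2}$, gives \eqref{eq:exact-rich-count-d3} in the same way.

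None of these steps is hard. The only points that need care are that the number of ordered off-diagonal pairs is at most $s^2$, matching the normalization of $\lambda_\eta$, and that $\lambda_\eta>0$, which is what guarantees the pairs in $\mathcal R_\eta$ are nonconcentric.
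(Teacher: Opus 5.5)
Your proposal is correct and follows the paper's proof essentially step for step. You use Cauchy--Schwarz with the energy decomposition to get $E_{\mathrm{off}}\ge\mathcal E_0$, discard sub-threshold pairs at a total cost below $\lambda_\eta s^2=\eta\mathcal E_0$, obtain the geometric conclusions from \cref{lem:radical,lem:coaxal} via $\lambda_\eta>0$, and get the counts by dividing by the pair-codegree bounds.
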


\begin{proof}
Cauchy--Schwarz gives $E(P,\mathcal S)\ge I^2/n$, hence by \eqref{eq:energy-decomp},
\begin{equation}\label{eq:Eoff-E0}
 E_{\mathrm{off}}(P,\mathcal S)\ge\mathcal E_0.
\end{equation}
Pairs outside $\mathcal R_\eta$ contribute less than
\[
 \lambda_\eta s^2=\eta\mathcal E_0.
\]
Subtracting from \eqref{eq:Eoff-E0} proves \eqref{eq:exact-rich-energy}.  A contributing pair has nonempty intersection and therefore is nonconcentric.  The radical-hyperplane conclusion follows from \cref{lem:radical}.  Dividing \eqref{eq:exact-rich-energy} by $q^{d-1}$ gives \eqref{eq:exact-rich-count}; for $d\ge3$, division by $2q^{d-2}$ gives \eqref{eq:exact-rich-count-d3}.
\end{proof}

\subsection{Fixed-scale and moderate consequences}

\begin{corollary}[Fixed-scale adaptive inverse theorem]\label{cor:fixed-adaptive}
Let $P\subseteq\mathbb F_q^d$ be nonempty, let $\mathcal S$ be a nonempty family of distinct spheres, and let $K>0$.  Suppose
\begin{equation}\label{eq:fixed-surplus}
 \sigma(P,\mathcal S)\ge Kq^{(d-1)/2}\sqrt{|P||\mathcal S|}
\end{equation}
and
\begin{equation}\label{eq:point-sensitive-lower}
 K^2q^{d-1}|\mathcal S|\ge4|P|.
\end{equation}
Then at least $K^2|\mathcal S|/4$ ordered pairs satisfy
\begin{equation}\label{eq:adaptive-section}
 |P\cap S\cap S'|\ge
 \frac{K^2q^{d-1}}{4|\mathcal S|}.
\end{equation}
Every such pair supplies a radical hyperplane containing at least the same number of points.  At least $K^2/8$ spheres each have at least $K^2/8$ witnessing partners.  If $d\ge3$, at least $K^2q|\mathcal S|/8$ ordered pairs satisfy \eqref{eq:adaptive-section}, and at least $K^2q/16$ spheres each have at least $K^2q/16$ witnessing partners.
\end{corollary}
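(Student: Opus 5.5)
The plan is to derive everything from \cref{thm:exact-adaptive} with $\eta=1/2$. The work is to bound $\mathcal E_0=I^2/n-I$ from below in terms of $K$, and then to translate the resulting threshold $\lambda_{1/2}$ and the pair counts.

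\textbf{Lower bound for $\mathcal E_0$.} Write $n=|P|$ and $s=|\mathcal S|$, and set $I=ns/q+\sigma$ with $\sigma\ge Kq^{(d-1)/2}\sqrt{ns}$. We have
\[
 \frac{I^2}{n}-I=I\left(\frac In-1\right),\qquad \frac In-1=\frac sq-1+\frac\sigma n .
\]
The awkward term is the $-1$. The idea is to compare against the baseline: $\frac{I^2}{n}\ge \frac{(ns/q)^2}{n}+\frac{2s\sigma}{q}+\frac{\sigma^2}{n}$, and the baseline part $ns^2/q^2$ minus $I$ is handled by splitting into cases according to whether $\sigma/n$ dominates. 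A cleaner route avoids the cases by using $I\ge\sigma$ and $I/n-1\ge \sigma/n-1$. Hypothesis \eqref{eq:point-sensitive-lower} gives $\sigma^2/n\ge K^2q^{d-1}s\ge4n$, so $\sigma\ge2n$ and hence $\sigma/n-1\ge\sigma/(2n)$. Therefore
\[
 \mathcal E_0\ge \frac{\sigma^2}{2n}\ge \frac{K^2q^{d-1}s}{2},
\]
and in particular $\mathcal E_0>0$. (If $s/q\ge1$ the bound only improves, since $I/n-1\ge\sigma/n+s/q-1$.) This is precisely where the hypothesis \eqref{eq:point-sensitive-lower} is used, and getting it to produce exactly the constant $4$ is the main point to check.

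\textbf{Threshold and pair counts.} With $\eta=1/2$ we get $\lambda=\mathcal E_0/(2s^2)\ge K^2q^{d-1}/(4s)$, so every pair in $\mathcal R_{1/2}$ satisfies \eqref{eq:adaptive-section}. By \eqref{eq:exact-rich-count}, $|\mathcal R|\ge \mathcal E_0/(2q^{d-1})\ge K^2s/4$. For $d\ge3$, \eqref{eq:exact-rich-count-d3} gives $|\mathcal R|\ge\mathcal E_0/(4q^{d-2})\ge K^2qs/8$. The radical hyperplane statement is inherited directly from \cref{thm:exact-adaptive} via \cref{lem:radical}.

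\textbf{Degree statement.} This is a standard dichotomy. Let $D=K^2/8$, and count witnessing pairs by their first coordinate. Spheres with fewer than $D$ partners contribute fewer than $Ds=K^2s/8$ pairs, so spheres with at least $D$ partners carry at least $K^2s/8$ pairs. Each such sphere has at most $s-1<s$ partners, so there are more than $K^2/8$ of them. (If $K^2/8>s$ this would contradict $|\mathcal R|\le s^2$, so this case is vacuous.) The $d\ge3$ version is identical with $D=K^2q/16$, total $K^2qs/8$, giving at least $K^2q/16$ spheres.
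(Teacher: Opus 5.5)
Your proof is correct and follows essentially the same route as the paper: you use $I\ge\sigma\ge 2n$ to get $\mathcal E_0\ge\sigma^2/(2n)\ge K^2q^{d-1}s/2$, apply \cref{thm:exact-adaptive} with $\eta=1/2$, and divide by the pair bounds of \cref{lem:radical}. Your partner-counting argument is also the paper's, with your $D$ playing the role of its $a/2$; the opening remarks about case-splitting against the baseline are unnecessary and can be dropped.
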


\begin{proof}
Put $n=|P|$, $s=|\mathcal S|$, and $\Sigma=\sigma(P,\mathcal S)$.  By \eqref{eq:fixed-surplus} and \eqref{eq:point-sensitive-lower}, $\Sigma\ge2n$.  Since $I\ge\Sigma$ and $x\mapsto x^2/n-x$ is increasing for $x\ge n/2$,
\[
 \mathcal E_0\ge\frac{\Sigma^2}{n}-\Sigma
 \ge\frac{\Sigma^2}{2n}
 \ge\frac12K^2q^{d-1}s.
\]
Apply \cref{thm:exact-adaptive} with $\eta=1/2$.  Its threshold is at least $K^2q^{d-1}/(4s)$, and its rich energy is at least $K^2q^{d-1}s/4$.  Division by the pair bounds in \cref{lem:radical} gives the asserted ordered-pair counts.

For the partner statements, let $f(S)$ be the number of witnessing partners of $S$.  If $\sum_S f(S)\ge as$, then vertices with $f(S)<a/2$ contribute less than $as/2$, while each remaining vertex contributes at most $s$; hence at least $a/2$ vertices have at least $a/2$ partners.  Use $a=K^2/4$, and, when $d\ge3$, $a=K^2q/8$.
\end{proof}

Since $|P|\le q^d$, the simpler condition $|\mathcal S|\ge4q/K^2$ implies \eqref{eq:point-sensitive-lower}.

\begin{corollary}[Moderate-family inverse theorem]\label{cor:moderate}
Let $P\subseteq\mathbb F_q^d$ be nonempty, let $\mathcal S$ be a nonempty family of distinct spheres, let $K>0$, and let $C_1\ge1$.  Suppose \eqref{eq:fixed-surplus} holds and
\begin{equation}\label{eq:moderate-window}
 \frac{4|P|}{K^2q^{d-1}}
 \le|\mathcal S|
 \le C_1Kq^{(d-1)/2}.
\end{equation}
Then at least $K^2|\mathcal S|/4$ ordered pairs of spheres satisfy
\begin{equation}\label{eq:moderate-section}
 |P\cap S\cap S'|\ge
 \frac{Kq^{(d-1)/2}}{4C_1}.
\end{equation}
Their radical hyperplanes contain at least the same number of points.  If $d\ge3$, the number of witnessing pairs is at least $K^2q|\mathcal S|/8$.  The partner-abundance conclusions of \cref{cor:fixed-adaptive} hold as well.
\end{corollary}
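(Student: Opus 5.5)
This is a direct specialization of \cref{cor:fixed-adaptive}, so the plan is to verify its hypotheses and then rescale its section threshold. The lower inequality in \eqref{eq:moderate-window} is exactly \eqref{eq:point-sensitive-lower}. Together with \eqref{eq:fixed-surplus}, this means \cref{cor:fixed-adaptive} applies verbatim. It yields at least $K^2|\mathcal S|/4$ ordered pairs satisfying \eqref{eq:adaptive-section}, and at least $K^2q|\mathcal S|/8$ such pairs when $d\ge3$. It also supplies the radical-hyperplane statement and the partner-abundance conclusions.

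The only new ingredient is to compare thresholds using the upper cap in \eqref{eq:moderate-window}. Since $|\mathcal S|\le C_1Kq^{(d-1)/2}$,
\[
 \frac{K^2q^{d-1}}{4|\mathcal S|}\ge\frac{K^2q^{d-1}}{4C_1Kq^{(d-1)/2}}=\frac{Kq^{(d-1)/2}}{4C_1}.
\]
Hence every pair witnessing \eqref{eq:adaptive-section} also witnesses \eqref{eq:moderate-section}, and all the counts carry over unchanged. The radical-hyperplane claim transfers in the same way: \cref{lem:radical} places $P\cap S\cap S'$ inside $H(S,S')$.

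There is no genuine obstacle here. The only point to check is that the window in \eqref{eq:moderate-window} is nonempty when used, but this is irrelevant to the logic: if the window is empty, the hypotheses cannot hold and the statement is vacuous. I would also note that $C_1\ge1$ is not needed for the argument. It only makes the threshold meaningful relative to the pencil examples of \cref{ex:one-pencil}.
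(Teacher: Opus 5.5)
Your proposal is correct and matches the paper's proof. Both note that the lower end of the window is exactly the point-sensitive hypothesis of \cref{cor:fixed-adaptive}, then use the cap $|\mathcal S|\le C_1Kq^{(d-1)/2}$ to turn the adaptive threshold $K^2q^{d-1}/(4|\mathcal S|)$ into $Kq^{(d-1)/2}/(4C_1)$, with all counts, radical-hyperplane and partner-abundance conclusions carried over unchanged.
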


\begin{proof}
Apply \cref{cor:fixed-adaptive}; the upper bound in \eqref{eq:moderate-window} turns \eqref{eq:adaptive-section} into \eqref{eq:moderate-section}.
\end{proof}

\begin{corollary}[Stability]\label{cor:stability}
Let $P\subseteq\mathbb F_q^d$ be nonempty, let $\mathcal S$ be a nonempty family of distinct spheres, let $K>0$, and let $C_1\ge1$.  Assume \eqref{eq:moderate-window}.  If every pair of distinct spheres in $\mathcal S$ satisfies
\[
 |P\cap S\cap S'|<\frac{Kq^{(d-1)/2}}{4C_1},
\]
then \eqref{eq:fixed-surplus} fails.  It is enough to impose the same strict upper bound on $|P\cap H|$ for every affine hyperplane $H$.
\end{corollary}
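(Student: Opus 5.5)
The plan is to argue by contraposition from \cref{cor:moderate}. Suppose \eqref{eq:moderate-window} holds and \eqref{eq:fixed-surplus} also holds. Then \cref{cor:moderate} applies and produces at least $K^2|\mathcal S|/4$ ordered pairs $(S,S')$ of distinct spheres satisfying \eqref{eq:moderate-section}. Since $K>0$ and $|\mathcal S|\ge1$, this count is positive, so at least one such pair exists. For that pair
\[
 |P\cap S\cap S'|\ge\frac{Kq^{(d-1)/2}}{4C_1}.
\]
This contradicts the assumed strict upper bound for every pair of distinct spheres. Hence \eqref{eq:fixed-surplus} fails.

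One small point needs care: the count $K^2|\mathcal S|/4$ might be less than $1$. The conclusion of \cref{cor:fixed-adaptive} passes through \cref{thm:exact-adaptive}, where the rich energy is at least $(1-\eta)\mathcal E_0>0$. Thus $\mathcal R_\eta$ is nonempty whenever $\mathcal E_0>0$, and this holds because $\Sigma\ge2n$ gives $\mathcal E_0\ge\Sigma^2/(2n)>0$. So at least one witnessing pair exists regardless of the size of $K^2|\mathcal S|/4$. I will cite this nonemptiness explicitly rather than rely on the numerical count.

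For the hyperplane version, take the witnessing pair. It has nonempty intersection, so by \cref{lem:radical} it is nonconcentric and $S\cap S'\subseteq H(S,S')$ for a genuine affine hyperplane. Therefore $|P\cap H(S,S')|\ge|P\cap S\cap S'|\ge Kq^{(d-1)/2}/(4C_1)$, which contradicts the hyperplane bound. Equivalently, the hyperplane hypothesis implies the pair hypothesis, and the first part then applies.

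There is no real obstacle. The only delicate step is the nonemptiness of the witness set when $K^2|\mathcal S|/4<1$, and the energy argument above handles it.
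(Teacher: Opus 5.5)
Your proof is correct and follows the paper's argument: the contrapositive of \cref{cor:moderate}, with the hyperplane version coming from the containment $S\cap S'\subseteq H(S,S')$ of \cref{lem:radical} for nonconcentric pairs. The detour through $\mathcal R_\eta$ is harmless but unnecessary, because a strictly positive lower bound $K^2|\mathcal S|/4>0$ on an integer count already guarantees at least one witnessing pair.
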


\begin{proof}
This is the contrapositive of \cref{cor:moderate}, together with the containment of every nonconcentric sphere-pair section in its radical hyperplane.
\end{proof}

\subsection{Exact sphere-complement duality and its limited omitted-family consequence}

The complete mixed-radius design has an exact complement symmetry.  The following auxiliary proposition records its consequence for deficits.  As quantified below, the pair-section threshold is structurally nontrivial only when the omitted family is very small.

\begin{proposition}[Complement duality and an omitted-family corollary]\label{prop:deficit-complement}
Let $P\subseteq\mathbb F_q^d$ be nonempty, put $n=|P|$, and let $\mathcal S\subsetneq\Omega$.  Write
\[
 D=\frac{n|\mathcal S|}{q}-I(P,\mathcal S)=-\sigma(P,\mathcal S)>0,
 \qquad
 \overline{\mathcal S}=\Omega\setminus\mathcal S,
 \qquad
 \bar s=|\overline{\mathcal S}|.
\]
Then
\begin{equation}\label{eq:deficit-duality}
 \sigma(P,\overline{\mathcal S})=D.
\end{equation}
If $D\ge2n$, then at least
\begin{equation}\label{eq:deficit-count}
 \frac{D^2}{4nq^{d-1}}
\end{equation}
ordered pairs of distinct omitted spheres $T,T'\in\overline{\mathcal S}$ satisfy
\begin{equation}\label{eq:deficit-section}
 |P\cap T\cap T'|
 \ge\frac{D^2}{4n\bar s^2}.
\end{equation}
If $d\ge3$, the number of such ordered pairs is at least
\begin{equation}\label{eq:deficit-count-d3}
 \frac{D^2}{8nq^{d-2}}.
\end{equation}
Equivalently, if for some $K>0$
\begin{equation}\label{eq:deficit-fixed-scale}
 \sigma(P,\mathcal S)
 \le-Kq^{(d-1)/2}\sqrt{n|\mathcal S|}
 \quad\text{and}\quad
 K^2q^{d-1}|\mathcal S|\ge4n,
\end{equation}
then the omitted family contains at least $K^2|\mathcal S|/4$ ordered pairs satisfying
\begin{equation}\label{eq:deficit-fixed-section}
 |P\cap T\cap T'|
 \ge\frac{K^2q^{d-1}|\mathcal S|}{4\bar s^2},
\end{equation}
and at least $K^2q|\mathcal S|/8$ such pairs when $d\ge3$.
\end{proposition}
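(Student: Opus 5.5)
The plan has three steps: prove the duality identity \eqref{eq:deficit-duality} from the design regularity, run the energy-extraction argument of \cref{thm:exact-adaptive} on the omitted family, and then specialize to the fixed-scale hypothesis \eqref{eq:deficit-fixed-scale}.

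\emph{Duality.} By \cref{prop:sphere-design}, every point lies on exactly $q^d$ spheres of $\Omega$. Hence
\[
 I(P,\Omega)=nq^d=\frac{n|\Omega|}{q},
 \qquad\text{that is,}\qquad
 \sigma(P,\Omega)=0 .
\]
Since $I$ is additive over the disjoint union $\Omega=\mathcal S\sqcup\overline{\mathcal S}$, so is $\sigma$. Therefore $\sigma(P,\overline{\mathcal S})=-\sigma(P,\mathcal S)=D$.

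\emph{Extraction on the omitted family.} Set $I'=I(P,\overline{\mathcal S})$. Then $I'\ge\sigma(P,\overline{\mathcal S})=D\ge2n$. As in the proof of \cref{cor:fixed-adaptive}, the map $x\mapsto x^2/n-x$ is increasing for $x\ge n/2$, so
\[
 \mathcal E_0'=\frac{I'^2}{n}-I'\ge\frac{D^2}{n}-D\ge\frac{D^2}{2n}>0 .
\]
Apply \cref{thm:exact-adaptive} to $(P,\overline{\mathcal S})$ with $\eta=1/2$. The threshold becomes $\lambda=\mathcal E_0'/(2\bar s^2)\ge D^2/(4n\bar s^2)$, which is \eqref{eq:deficit-section}. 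The rich pairs carry energy at least $\mathcal E_0'/2\ge D^2/(4n)$. Dividing this energy by the pair bound $q^{d-1}$ of \cref{lem:radical} gives the count \eqref{eq:deficit-count}. For $d\ge3$, dividing by $2q^{d-2}$ instead gives \eqref{eq:deficit-count-d3}.

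\emph{Fixed-scale form.} Under \eqref{eq:deficit-fixed-scale} we have $D\ge Kq^{(d-1)/2}\sqrt{n|\mathcal S|}$. The second hypothesis, $K^2q^{d-1}|\mathcal S|\ge4n$, gives $D\ge\sqrt{4n\cdot n}=2n$, so the extraction step applies. Moreover $D^2\ge K^2q^{d-1}n|\mathcal S|$. Substituting this into \eqref{eq:deficit-count}, \eqref{eq:deficit-section} and \eqref{eq:deficit-count-d3} yields exactly the counts $K^2|\mathcal S|/4$ and $K^2q|\mathcal S|/8$ and the threshold \eqref{eq:deficit-fixed-section}.

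The only point needing care is $D>0$ together with $\mathcal S\subsetneq\Omega$. These guarantee that $\overline{\mathcal S}$ is nonempty, so $\bar s\ge1$ and \cref{thm:exact-adaptive} applies to a nonempty family. No step is a real obstacle: the duality is one line, and everything else is bookkeeping with the earlier extraction theorem.
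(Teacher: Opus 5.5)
Your proposal is correct and matches the paper's proof step for step. Both derive duality from $\sigma(P,\Omega)=0$ and additivity, apply \cref{thm:exact-adaptive} with $\eta=1/2$ to $(P,\overline{\mathcal S})$ using the monotonicity of $x\mapsto x^2/n-x$, divide the rich energy by the pair bounds of \cref{lem:radical}, and obtain the fixed-scale form by substituting $D^2\ge K^2q^{d-1}n|\mathcal S|\ge4n^2$.
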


\begin{proof}
Every point lies on $q^d$ members of $\Omega$, so $\sigma(P,\Omega)=0$ and therefore
\[
 \sigma(P,\overline{\mathcal S})=D.
\]
Put $\bar I=I(P,\overline{\mathcal S})$.  Since $\bar I\ge D\ge2n$ and $x\mapsto x^2/n-x$ is increasing for $x\ge n/2$,
\[
 \frac{\bar I^2}{n}-\bar I
 \ge\frac{D^2}{n}-D
 \ge\frac{D^2}{2n}.
\]
Apply \cref{thm:exact-adaptive} to $(P,\overline{\mathcal S})$ with $\eta=1/2$.  The threshold is at least $D^2/(4n\bar s^2)$, and the rich energy is at least $D^2/(4n)$.  Division by the pair bounds $q^{d-1}$ and, for $d\ge3$, $2q^{d-2}$ proves \eqref{eq:deficit-count}--\eqref{eq:deficit-count-d3}.  Under \eqref{eq:deficit-fixed-scale}, one has $D^2\ge K^2q^{d-1}n|\mathcal S|\ge4n^2$, so the first part applies and gives the normalized conclusions.
\end{proof}

\begin{remark}[Nontriviality range]\label{rem:deficit-scope}
The proposition is an exact statement about the complete mixed-radius design, but it is usually formal rather than structural.  By \eqref{eq:complement-sphere}, with $s=|\mathcal S|$ and $M=|\Omega|=q^{d+1}$,
\[
 D^2\le(q^d-q^{d-1})ns\left(1-\frac{s}{M}\right)
 =\frac{q-1}{q^2}\,ns\bar s.
\]
Consequently
\begin{equation}\label{eq:deficit-nontriviality}
 \frac{D^2}{4n\bar s^2}
 \le \frac{q-1}{4q^2}\frac{s}{\bar s}.
\end{equation}
In particular, if $s\le M/2$, then the section-size threshold in \eqref{eq:deficit-section} is strictly smaller than $1$, so the conclusion gives no more than nonemptiness.  For that threshold to exceed $1$, it is necessary that
\[
 \frac{s}{\bar s}>\frac{4q^2}{q-1};
\]
thus the omitted family must be an $O(q^{-1})$ fraction of the full sphere system.  The pair-count bound is also nonstructural in a large natural range.  If $s\le q^d/2$, then every point belongs to at least $q^d-s$ omitted spheres, so one point alone supplies at least
\[
 (q^d-s)(q^d-s-1)
\]
ordered omitted pairs with nonempty $P$-section.  Meanwhile \eqref{eq:complement-sphere} gives
\[
 \frac{D^2}{4nq^{d-1}}
 \le \frac{q-1}{4}s
 \le \frac{q-1}{8}q^d
 <\frac{q^d}{2}\left(\frac{q^d}{2}-1\right)
\]
for $q\ge3$ and $d\ge2$.  Hence the stated pair count is weaker there than this trivial one-point count.  We therefore use \cref{prop:deficit-complement} only as a complement-duality observation, not as an intrinsic inverse theorem for deficits in the original family.  A deficit inside one prescribed fixed-radius class depends on near-extremal Kloosterman and Sali\'e Fourier directions and is not resolved here.
\end{remark}

\subsection{A dense rich-pair core in dimension three}

\begin{proposition}[Positive-density rich-pair core in $d=3$]\label{prop:d3-core}
Assume the hypotheses of \cref{cor:moderate} with $d=3$, and put $s=|\mathcal S|$.  Let $G$ be the simple graph on $\mathcal S$ in which $S$ and $S'$ are adjacent when
\[
 |P\cap S\cap S'|\ge\frac{Kq}{4C_1}.
\]
Then
\begin{equation}\label{eq:d3-density}
 e(G)\ge\frac{K^2qs}{16}\ge\frac{K}{16C_1}s^2.
\end{equation}
Moreover, there is a nonempty induced subgraph $G[\mathcal S_0]$ satisfying
\begin{equation}\label{eq:d3-core}
 \delta(G[\mathcal S_0])\ge\frac{K}{16C_1}s,
 \qquad
 |\mathcal S_0|\ge1+\frac{K}{16C_1}s.
\end{equation}
Thus a positive proportion of the sphere family lies in a subfamily in which every sphere has a positive proportion of rich partners.
\end{proposition}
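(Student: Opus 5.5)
The plan has two steps: first derive the edge bound \eqref{eq:d3-density} from \cref{cor:moderate}, then extract a high-minimum-degree subgraph by the standard deletion argument.

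\textbf{Edge count.} Apply \cref{cor:moderate} with $d=3$. It gives at least $K^2qs/8$ ordered pairs of distinct spheres satisfying $|P\cap S\cap S'|\ge Kq/(4C_1)$. This condition is symmetric in $S,S'$, so each edge of $G$ accounts for exactly two ordered pairs, and hence $e(G)\ge K^2qs/16$. For the second inequality in \eqref{eq:d3-density}, use the upper half of the moderate window \eqref{eq:moderate-window} with $d=3$, namely $s\le C_1Kq$. This gives $q\ge s/(C_1K)$, and therefore
\[
 \frac{K^2qs}{16}\ge\frac{K}{16C_1}s^2.
\]

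\textbf{Core extraction.} Set $a=Ks/(16C_1)$, so that $e(G)\ge as$. Repeatedly delete any vertex whose degree in the current induced subgraph is less than $a$. Each deletion removes fewer than $a$ edges, and there are at most $s$ deletions. If every vertex were deleted, the total number of edges removed would be less than $as\le e(G)$. But deleting all vertices removes all $e(G)$ edges, a contradiction. Hence the process stops at a nonempty set $\mathcal S_0$ with $\delta(G[\mathcal S_0])\ge a$.

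Since $G$ is simple, any vertex of $G[\mathcal S_0]$ has at most $|\mathcal S_0|-1$ neighbours, so $|\mathcal S_0|\ge1+a$, which is \eqref{eq:d3-core}.

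\textbf{Main obstacle.} The deletion step is the only delicate point, and only because of strictness. The argument needs ``fewer than $a$'' removed per step, so that the total removed is strictly less than $as\le e(G)$. If the degree threshold is taken with non-strict inequality, the same reasoning still goes through with the condition $e(G)>a(s-1)$ in place of $e(G)\ge as$; since $e(G)\ge as>a(s-1)$, either convention works.
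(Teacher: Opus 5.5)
Your proposal is correct and follows the paper's proof essentially step for step. You halve the $K^2qs/8$ ordered-pair count from \cref{cor:moderate}, use $s\le C_1Kq$, and then run the low-degree vertex deletion argument. The only cosmetic difference is that you take the threshold $a=Ks/(16C_1)$ directly, whereas the paper takes $a=e(G)/s\ge Ks/(16C_1)$; this changes nothing.
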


\begin{proof}
By \cref{cor:moderate}, at least $K^2qs/8$ ordered pairs meet the displayed threshold.  Each edge of $G$ contributes the two orientations, so $e(G)\ge K^2qs/16$.  Since $s\le C_1Kq$, the second inequality in \eqref{eq:d3-density} follows.

Set $a=e(G)/s$.  Repeatedly delete a vertex of current degree less than $a$.  If every vertex were deleted, the sum of the degrees at deletion would be strictly less than $sa=e(G)$, although each original edge is counted exactly once when its first endpoint is deleted.  Hence a nonempty induced subgraph remains and has minimum degree at least $a\ge Ks/(16C_1)$.  Its number of vertices is at least one more than its minimum degree.
\end{proof}

\subsection{A log-free incidence-rich refinement}

\begin{proposition}[High-degree rich section without logarithmic loss]\label{prop:logfree}
Let $P\subseteq\mathbb F_q^d$ be nonempty, let $\mathcal S$ be a nonempty family of distinct spheres, let $K>0$, and let $C_1\ge1$.  Suppose \eqref{eq:fixed-surplus} holds and
\begin{equation}\label{eq:logfree-window}
 \frac{64|P|}{K^2q^{d-1}}
 \le|\mathcal S|
 \le C_1Kq^{(d-1)/2}.
\end{equation}
Put $n=|P|$, $s=|\mathcal S|$, $I=I(P,\mathcal S)$, and
\[
 M=\left\lceil\frac{I}{2n}\right\rceil.
\]
There are two nonconcentric spheres $S_0,S_1\in\mathcal S$, a set
\[
 P'\subseteq P\cap S_0\cap S_1,
\]
and a subfamily $\mathcal S'\subseteq\mathcal S$ such that
\begin{enumerate}[label=\textup{(\arabic*)}]
\item $|P'|\ge Kq^{(d-1)/2}/(8C_1)$;
\item $d_{\mathcal S}(p)\ge M$ for every $p\in P'$;
\item every $S\in\mathcal S'$ satisfies
\[
 |P'\cap S|\ge\frac{M}{2s}|P'|,
\]
and
\[
 \sum_{S\in\mathcal S'}|P'\cap S|\ge\frac M2|P'|,
 \qquad |\mathcal S'|\ge\frac M2.
\]
\end{enumerate}
\end{proposition}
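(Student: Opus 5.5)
The plan is a single high-degree truncation followed by the pigeonhole step from the proof of \cref{thm:exact-adaptive}, applied to the high-degree points only; a final averaging over spheres then gives item (3).

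\textbf{Step 1 (truncation).} Let
\[
 P_{\mathrm{hi}}=\{p\in P:d_{\mathcal S}(p)\ge M\}.
\]
Every point outside $P_{\mathrm{hi}}$ has degree at most $M-1<I/(2n)$. Hence the low points carry fewer than $n\cdot I/(2n)=I/2$ incidences, so $I_{\mathrm{hi}}:=I(P_{\mathrm{hi}},\mathcal S)>I/2$.

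\textbf{Step 2 (energy lower bound).} Write $d(p)=d_{\mathcal S}(p)$. By \eqref{eq:energy-decomp} applied to $P_{\mathrm{hi}}$,
\[
 E_{\mathrm{off}}(P_{\mathrm{hi}},\mathcal S)=\sum_{p\in P_{\mathrm{hi}}}d(p)\bigl(d(p)-1\bigr)\ge(M-1)I_{\mathrm{hi}}\ge\frac I2\Bigl(\frac I{2n}-1\Bigr).
\]
To use this, I first lower-bound $I$. The surplus hypothesis \eqref{eq:fixed-surplus} together with the lower end of \eqref{eq:logfree-window} gives
\[
 I\ge\sigma(P,\mathcal S)\ge Kq^{(d-1)/2}\sqrt{ns}\ge8n .
\]
Therefore
\[
 E_{\mathrm{off}}(P_{\mathrm{hi}},\mathcal S)\ge\frac{I^2}{4n}\Bigl(1-\frac{2n}I\Bigr)\ge\frac{3I^2}{16n}\ge\frac3{16}K^2q^{d-1}s,
\]
where the last step uses $I^2\ge K^2q^{d-1}ns$.

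\textbf{Step 3 (finding the rich pair).} Set $\lambda=Kq^{(d-1)/2}/(8C_1)$. Pairs of distinct spheres whose $P_{\mathrm{hi}}$-section is smaller than $\lambda$ contribute less than $\lambda s^2$ to $E_{\mathrm{off}}$. The upper end $s\le C_1Kq^{(d-1)/2}$ of \eqref{eq:logfree-window} gives
\[
 \lambda s^2\le \frac{K^2q^{d-1}s}{8}<\frac3{16}K^2q^{d-1}s .
\]
So these small pairs cannot exhaust the off-diagonal energy, and some ordered pair $S_0\ne S_1$ satisfies $|P_{\mathrm{hi}}\cap S_0\cap S_1|\ge\lambda$. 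Since the intersection is nonempty, the pair is nonconcentric by \cref{lem:radical}. Take
\[
 P'=P_{\mathrm{hi}}\cap S_0\cap S_1 .
\]
This gives items (1) and (2) directly.

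\textbf{Step 4 (item (3)).} Because every $p\in P'$ has degree at least $M$, double counting gives $\sum_{S\in\mathcal S}|P'\cap S|\ge M|P'|$. Define
\[
 \mathcal S'=\Bigl\{S\in\mathcal S:|P'\cap S|\ge\frac{M}{2s}|P'|\Bigr\}.
\]
The $s$ or fewer spheres outside $\mathcal S'$ contribute less than $M|P'|/2$ in total, so $\sum_{S\in\mathcal S'}|P'\cap S|\ge M|P'|/2$. Each term in this sum is at most $|P'|$, which forces $|\mathcal S'|\ge M/2$.

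\textbf{Main obstacle.} The only delicate point is Step 2. One must check that the truncation loss, namely the factor $1/2$ from Step 1 and the $-I/2$ term from the diagonal, still leaves enough energy to beat $\lambda s^2$. This is exactly where the strengthened constant $64$ in \eqref{eq:logfree-window} is used: it makes $I\ge8n$, which is what keeps the constants favourable.
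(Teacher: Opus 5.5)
Your proof is correct and follows the paper's argument step for step: you truncate to the points of degree at least $M$, show the off-diagonal energy of that set exceeds $\frac{3}{16}K^2q^{d-1}s$, use the upper end of the window to show pairs below $\lambda$ cannot account for it, and average over spheres for item (3). The only difference is cosmetic, in Step 2: you bound $\sum d(d-1)\ge(M-1)I_{\mathrm{hi}}$ using $I\ge 8n$, while the paper writes $E\ge M\,I(A,\mathcal S)$ and then $E_{\mathrm{off}}\ge\frac34E$ using the equivalent fact $M\ge4$.
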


\begin{proof}
Let
\[
 A=\{p\in P:d_{\mathcal S}(p)\ge M\}.
\]
Since $M=\lceil I/(2n)\rceil$, points outside $A$ have degree strictly less than $I/(2n)$ and contribute less than $I/2$ incidences.  Hence
\begin{equation}\label{eq:IA-half}
 I(A,\mathcal S)>\frac I2.
\end{equation}
The lower bound in \eqref{eq:logfree-window} gives
\[
 M\ge\frac{I}{2n}
 \ge\frac{Kq^{(d-1)/2}}2\sqrt{\frac{s}{n}}
 \ge4.
\]
On $A$, $d_{\mathcal S}(p)^2\ge M d_{\mathcal S}(p)$, so
\[
 E(A,\mathcal S)
 \ge M I(A,\mathcal S)
 >\frac{I^2}{4n}
 \ge\frac{K^2q^{d-1}s}{4}.
\]
Also $I(A,\mathcal S)\le E(A,\mathcal S)/M\le E(A,\mathcal S)/4$, and therefore
\begin{equation}\label{eq:off-A}
 E_{\mathrm{off}}(A,\mathcal S)
 \ge\frac34E(A,\mathcal S)
 >\frac{3K^2q^{d-1}s}{16}.
\end{equation}
Set $\lambda=Kq^{(d-1)/2}/(8C_1)$.  Pairs with $|A\cap S\cap S'|<\lambda$ carry at most
\[
 \lambda s^2\le\frac{K^2q^{d-1}s}{8},
\]
which is smaller than \eqref{eq:off-A}.  Thus a nonconcentric pair $S_0,S_1$ satisfies
\[
 |A\cap S_0\cap S_1|\ge\lambda.
\]
Take $P'=A\cap S_0\cap S_1$.

Finally, let
\[
 \mathcal S'=\left\{S\in\mathcal S:
 |P'\cap S|\ge\frac{M}{2s}|P'|\right\}.
\]
Since $I(P',\mathcal S)\ge M|P'|$, spheres outside $\mathcal S'$ contribute less than $M|P'|/2$.  Thus the asserted incidence lower bound holds, and division by the trivial upper bound $|P'|$ for each summand gives $|\mathcal S'|\ge M/2$.
\end{proof}

\subsection{Decomposition of the positive surplus}

Fix $\mathcal S$ and write
\[
 e_{\mathcal S}(p)=d_{\mathcal S}(p)-\frac{|\mathcal S|}{q},
 \qquad
 P_{\mathrm{pos}}=\{p\in P:e_{\mathcal S}(p)>0\}.
\]
For $A\subseteq P$, abbreviate $\sigma(A)=\sum_{p\in A}e_{\mathcal S}(p)$.

\begin{theorem}[Positive-surplus decomposition into section-supported pieces]\label{thm:decomposition}
Let $P\subseteq\mathbb F_q^d$ be nonempty, let $\mathcal S$ be a nonempty family of distinct spheres, and let $K>0$.  Assume \eqref{eq:fixed-surplus}.  Let $0<\eps\le1$ and suppose
\begin{equation}\label{eq:decomp-lower}
 \eps^2K^2q^{d-1}|\mathcal S|\ge4|P|.
\end{equation}
Then $P$ has a partition
\[
 P=R_1\sqcup\cdots\sqcup R_T\sqcup P_{\mathrm{rem}}
\]
with the following properties.
\begin{enumerate}[label=\textup{(\arabic*)}]
\item For every $j$, there is a nonconcentric pair $(S_j,S_j')\in\mathcal S^2$ such that
\begin{equation}\label{eq:piece-containment}
 R_j\subseteq P_{\mathrm{pos}}\cap S_j\cap S_j',
 \qquad
 |R_j|\ge
 \frac{\eps^2K^2q^{d-1}}{4|\mathcal S|}.
\end{equation}
\item Every extracted piece has strictly positive surplus: $\sigma(R_j)>0$.
\item The number of pieces satisfies
\[
 T\le\frac{4|P||\mathcal S|}{\eps^2K^2q^{d-1}}.
\]
\item The remainder obeys
\[
 \sigma(P_{\mathrm{rem}})<\eps\sigma(P,\mathcal S),
\]
and the sections carry the remaining net surplus:
\[
 \sum_{j=1}^T\sigma(R_j)>(1-\eps)\sigma(P,\mathcal S).
\]
\end{enumerate}
If $|\mathcal S|\le C_1Kq^{(d-1)/2}$, then every piece has size at least
\[
 \frac{\eps^2Kq^{(d-1)/2}}{4C_1}.
\]
\end{theorem}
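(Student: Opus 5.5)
Write $\Sigma=\sigma(P,\mathcal S)$, $n=|P|$, $s=|\mathcal S|$ and $\lambda=\eps^2K^2q^{d-1}/(4s)$.

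The plan is a greedy iteration inside the positive-excess core $P_{\mathrm{pos}}$. At each stage I extract one rich sphere-pair section with the exact energy argument of \cref{thm:exact-adaptive} and \cref{cor:fixed-adaptive}, remove it, and stop once the remaining positive core carries less than $\eps\Sigma$ of surplus.

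Two preliminary facts are needed. First, since $e_{\mathcal S}(p)\le0$ off $P_{\mathrm{pos}}$, one has $\sigma(P_{\mathrm{pos}})\ge\Sigma$. Second, every subset $A\subseteq P_{\mathrm{pos}}$ has $\sigma(A)\ge0$, and every nonempty such subset has $\sigma(A)>0$. The second fact settles property (2) immediately, because every piece will lie inside $P_{\mathrm{pos}}$.

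The iteration maintains a current set $A\subseteq P_{\mathrm{pos}}$ of unextracted positive points, starting from $A=P_{\mathrm{pos}}$. While $\sigma(A)\ge\eps\Sigma$, I do the following.

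1. From $I(A,\mathcal S)\ge\sigma(A)\ge\eps\Sigma\ge\eps Kq^{(d-1)/2}\sqrt{ns}$ and \eqref{eq:decomp-lower}, I get $\eps\Sigma\ge2n\ge2|A|$.
2. As in the proof of \cref{cor:fixed-adaptive}, monotonicity of $x\mapsto x^2/|A|-x$ for $x\ge|A|/2$ then gives
\[
 \mathcal E_0(A)\ge\frac{\eps^2\Sigma^2}{2|A|}\ge\frac{\eps^2K^2q^{d-1}s}{2}.
\]
Here I use $|A|\le n$ together with $\Sigma^2\ge K^2q^{d-1}ns$.
3. Apply \cref{thm:exact-adaptive} to $(A,\mathcal S)$ with $\eta=1/2$. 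Its threshold $\lambda_{1/2}=\mathcal E_0(A)/(2s^2)$ is at least $\lambda$. The rich energy is positive, so some ordered pair $(S_j,S_j')$ satisfies $|A\cap S_j\cap S_j'|\ge\lambda$. That pair is nonconcentric, since its section is nonempty.
4. Set $R_j=A\cap S_j\cap S_j'$, delete it from $A$, and continue.

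Each step removes at least $\lambda>0$ points, so the process terminates. The pieces are disjoint, satisfy \eqref{eq:piece-containment}, and are at most $n/\lambda=4ns/(\eps^2K^2q^{d-1})$ in number, which is property (3).

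For the accounting, put $P_{\mathrm{rem}}=(P\setminus P_{\mathrm{pos}})\sqcup A_{\mathrm{final}}$. Then
\[
 \sum_j\sigma(R_j)=\sigma(P_{\mathrm{pos}})-\sigma(A_{\mathrm{final}})>\Sigma-\eps\Sigma,
\]
using $\sigma(P_{\mathrm{pos}})\ge\Sigma$. Also
\[
 \sigma(P_{\mathrm{rem}})=\Sigma-\sum_j\sigma(R_j)<\eps\Sigma,
\]
which is property (4). The moderate-cap statement follows by substituting $s\le C_1Kq^{(d-1)/2}$ into $\lambda$.

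I expect the main obstacle to be the step-1 lower bound on $\mathcal E_0(A)$ at each stage. The energy inequality needs the incidence count of the current core, not its surplus, to be at least $|A|/2$ and comparable to $\eps\Sigma$. This is the reason for restricting to $P_{\mathrm{pos}}$: on that set the surplus is monotone under deletion, so $I(A,\mathcal S)\ge\sigma(A)$ holds with a nonnegative remainder. It is also exactly where the factor $\eps^2$ in \eqref{eq:decomp-lower} is consumed.
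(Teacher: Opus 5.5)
Your proof is correct and is essentially the paper's argument: greedy extraction inside $P_{\mathrm{pos}}$ while the residual surplus is at least $\eps\Sigma$, using $\sigma(A)\ge\eps\Sigma\ge2n\ge2|A|$ to feed the coincidence-energy bound. The only difference is cosmetic: you apply \cref{thm:exact-adaptive} directly with $\mathcal E_0(A)\ge\eps^2K^2q^{d-1}s/2$, whereas the paper applies \cref{cor:fixed-adaptive} to $(A_t,\mathcal S)$ with the exact residual parameter $K_t=\sigma(A_t)/(q^{(d-1)/2}\sqrt{|A_t|s})$; also, your closing remark misplaces the role of $P_{\mathrm{pos}}$, since $I(A,\mathcal S)\ge\sigma(A)$ holds for every $A$, and the positive core is really needed for property (2) and for $\sigma(P_{\mathrm{pos}})\ge\Sigma$.
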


\begin{proof}
Put $s=|\mathcal S|$, $n=|P|$, and $\Sigma_0=\sigma(P,\mathcal S)>0$.  Split
\[
 P=P_{\mathrm{pos}}\sqcup P_{\mathrm{non}},
 \qquad
 P_{\mathrm{non}}=\{p\in P:e_{\mathcal S}(p)\le0\}.
\]
Then $\sigma(P_{\mathrm{non}})\le0$, so $\sigma(P_{\mathrm{pos}})\ge\Sigma_0$.

Start with $A_0=P_{\mathrm{pos}}$.  Given $A_t\subseteq P_{\mathrm{pos}}$, continue while
\[
 \sigma(A_t)\ge\eps\Sigma_0.
\]
By \eqref{eq:fixed-surplus} and \eqref{eq:decomp-lower},
\[
 \sigma(A_t)\ge\eps\Sigma_0
 \ge\eps Kq^{(d-1)/2}\sqrt{ns}
 \ge2n\ge2|A_t|.
\]
Define the exact residual parameter
\[
 K_t=\frac{\sigma(A_t)}{q^{(d-1)/2}\sqrt{|A_t|s}}.
\]
The inequality $\sigma(A_t)\ge2|A_t|$ is equivalent to $K_t^2q^{d-1}s\ge4|A_t|$, so \cref{cor:fixed-adaptive} applies to $(A_t,\mathcal S)$ and gives a pair whose common section in $A_t$ has size at least
\[
 \frac{K_t^2q^{d-1}}{4s}
 =\frac{\sigma(A_t)^2}{4|A_t|s^2}
 \ge\frac{\eps^2K^2q^{d-1}}{4s}.
\]
Set
\[
 R_{t+1}=A_t\cap S_{t+1}\cap S_{t+1}',
 \qquad A_{t+1}=A_t\setminus R_{t+1}.
\]
Every point of $A_t$ has positive excess, hence $\sigma(R_{t+1})>0$.  The pieces are disjoint and have the lower bound in \eqref{eq:piece-containment}, so the asserted bound on $T$ follows.

Let $A_T$ be the final positive-excess remainder and put
\[
 P_{\mathrm{rem}}=A_T\sqcup P_{\mathrm{non}}.
\]
At termination $\sigma(A_T)<\eps\Sigma_0$, and therefore
\[
 \sigma(P_{\mathrm{rem}})
 =\sigma(A_T)+\sigma(P_{\mathrm{non}})
 <\eps\Sigma_0.
\]
Additivity over the partition gives the final surplus inequality.  The moderate-family estimate follows immediately from the upper cap.
\end{proof}

\begin{remark}\label{rem:decomp-converse}
The geometry in \cref{thm:decomposition} lies entirely in the necessity direction.  Conversely, for any partition of $P$, the identity
\[
 \sigma(P,\mathcal S)=\sum_j\sigma(R_j)+\sigma(P_{\mathrm{rem}})
\]
is only additivity.  We therefore do not describe that bookkeeping implication as a structural converse.
\end{remark}

\begin{lemma}[Surplus assembly criterion]\label{lem:assembly}
Let $P=R_1\sqcup\cdots\sqcup R_T\sqcup P_{\mathrm{rem}}$ be any partition, and fix a nonempty family $\mathcal S$ of distinct spheres.  Suppose that, for some $K>0$ and $0\le\theta<1$,
\[
 \sum_{j=1}^T\sigma(R_j,\mathcal S)
 \ge Kq^{(d-1)/2}\sqrt{|P||\mathcal S|}
\]
and
\[
 \sigma(P_{\mathrm{rem}},\mathcal S)
 \ge-\theta Kq^{(d-1)/2}\sqrt{|P||\mathcal S|}.
\]
Then $(P,\mathcal S)$ has a $(1-\theta)K$-large fixed-scale surplus.  A sufficient pointwise condition for the first inequality is the existence of numbers $\delta_j>0$ such that
\[
 d_{\mathcal S}(p)\ge\frac{|\mathcal S|}{q}+\delta_j
 \quad(p\in R_j),
 \qquad
 \sum_{j=1}^T\delta_j|R_j|
 \ge Kq^{(d-1)/2}\sqrt{|P||\mathcal S|}.
\]
\end{lemma}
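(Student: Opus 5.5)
The statement is essentially bookkeeping, so the plan is to combine additivity of the centered count over a partition with the two hypotheses. The key point is that $\sigma(\cdot,\mathcal S)$ is additive over disjoint point sets. Indeed, $\sigma(A,\mathcal S)=\sum_{p\in A}\bigl(d_{\mathcal S}(p)-|\mathcal S|/q\bigr)$, because $I(A,\mathcal S)=\sum_{p\in A}d_{\mathcal S}(p)$. Hence for the given partition
\[
 \sigma(P,\mathcal S)=\sum_{j=1}^T\sigma(R_j,\mathcal S)+\sigma(P_{\mathrm{rem}},\mathcal S),
\]
exactly as recorded in \cref{rem:decomp-converse}.

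Next, substitute the two hypotheses into this identity. The first sum is at least $Kq^{(d-1)/2}\sqrt{|P||\mathcal S|}$. The remainder term is at least $-\theta Kq^{(d-1)/2}\sqrt{|P||\mathcal S|}$. Adding these gives
\[
 \sigma(P,\mathcal S)\ge(1-\theta)Kq^{(d-1)/2}\sqrt{|P||\mathcal S|}.
\]
This is precisely \eqref{eq:fixed-surplus} with $K$ replaced by $(1-\theta)K$. Since $\theta<1$, the new parameter $(1-\theta)K$ is still positive, so the conclusion is a genuine fixed-scale surplus in the sense of the paper.

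For the pointwise sufficient condition, sum the assumed lower bound $e_{\mathcal S}(p)\ge\delta_j$ over $p\in R_j$. This gives $\sigma(R_j,\mathcal S)\ge\delta_j|R_j|$. Summing over $j$ and using the assumed lower bound on $\sum_j\delta_j|R_j|$ recovers the first hypothesis.

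No step is a real obstacle. The only care needed is to use the exact normalization $|P||\mathcal S|$ of the whole system, not of the individual pieces, so that the constants match those in \eqref{eq:fixed-surplus}.
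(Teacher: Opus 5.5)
Your proof is correct and matches the paper's argument. Additivity of $\sigma(\cdot,\mathcal S)$ over the partition gives the $(1-\theta)K$ surplus, and summing the pointwise excess bound over each $R_j$ gives $\sigma(R_j,\mathcal S)\ge\delta_j|R_j|$, which yields the first hypothesis.
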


\begin{proof}
Additivity of $\sigma(\,\cdot\,,\mathcal S)$ over disjoint point sets gives the first assertion.  Under the pointwise condition,
\[
 \sigma(R_j,\mathcal S)
 =\sum_{p\in R_j}\left(d_{\mathcal S}(p)-\frac{|\mathcal S|}{q}\right)
 \ge\delta_j|R_j|;
\]
summing proves the stated sufficient condition.
\end{proof}

\section{Atomicity obstructions}\label{sec:atomicity}

The decomposition theorem cannot in general be replaced by one proportional section, nor can a dense rich-pair graph be forced into one coaxal pencil or a centered direct sum.  We give two structurally different obstruction classes.  The first consists of exact centered blocks at one fixed nonzero radius.  The second consists of diffuse root-evaluation designs carried by rational normal curves in lifted parameter space.

\subsection{The isotropic quotient}

Let $d\ge3$.  Since every nondegenerate quadratic form in at least three variables over a finite field is isotropic, choose $v\ne0$ with $Q(v)=0$.  Choose $w\in\mathbb F_q^d$ with
\[
 \ip{w}{v}=1.
\]
The restriction of $Q$ to $v^{\perp_Q}$ has radical $\mathbb F_q v$ and therefore induces a nondegenerate quadratic form $\overline Q$ on
\[
 \overline V=v^{\perp_Q}/\mathbb F_q v,
 \qquad \dim\overline V=d-2.
\]
Choose a represented value $R\in\mathbb F_q^\times$ and put
\begin{equation}\label{eq:NR-def}
 X_R=\{y\in v^{\perp_Q}:Q(y)=R\},
 \qquad N_R=|X_R|.
\end{equation}
Because $Q(y+uv)=Q(y)$ on $v^{\perp_Q}$, every point of the quotient level set has exactly $q$ lifts, so
\begin{equation}\label{eq:NR-quotient}
 N_R=q\,|\{\bar y\in\overline V:\overline Q(\bar y)=R\}|.
\end{equation}
The standard evaluation of quadratic level sets gives
\begin{equation}\label{eq:NR-asymptotic}
 N_R=q^{d-2}+O_d\bigl(q^{(d-1)/2}\bigr)
 \qquad(d\ge4).
\end{equation}
When $d=3$, $R$ may be chosen so that $N_R=2q$.  We use only that $N_R\asymp_d q^{d-2}$ for $d\ge4$.  See, for example, \cite[Chapter~6]{LN}.

\subsection{Exact centered-incidence blocks}

Let $Z\subseteq\mathbb F_q$ be nonempty, with $|Z|=t$.  For $z\in Z$, define
\begin{equation}\label{eq:fixed-block-def}
 c_z=zw,
 \qquad
 A_z=c_z+X_R,
 \qquad
 \mathcal S_z=\{S_Q(c_z+uv,R):u\in\mathbb F_q\}.
\end{equation}
Finally put
\begin{equation}\label{eq:fixed-global-def}
 P=\bigsqcup_{z\in Z}A_z,
 \qquad
 \mathcal S=\bigsqcup_{z\in Z}\mathcal S_z.
\end{equation}
The unions are disjoint: every $x\in A_z$ satisfies $\ip{x}{v}=z$, and the centers $zw+uv$ are distinct as $(z,u)$ varies.  Every sphere in $\mathcal S$ has the same nonzero radius $R$.

\begin{theorem}[Exact fixed-radius block system]\label{thm:fixed-block}
For the configuration in \eqref{eq:fixed-block-def}--\eqref{eq:fixed-global-def}, the following hold.
\begin{enumerate}[label=\textup{(\arabic*)}]
\item The sizes are
\begin{equation}\label{eq:fixed-sizes}
 |P|=tN_R,
 \qquad
 |\mathcal S|=tq.
\end{equation}
\item The centered block-incidence matrix is diagonal:
\begin{equation}\label{eq:fixed-cross}
 \sigma(A_b,\mathcal S_z)=
 \begin{cases}
  (q-1)N_R,&b=z,\\
  0,&b\ne z.
 \end{cases}
\end{equation}
Consequently
\begin{equation}\label{eq:fixed-sigma}
 \sigma(P,\mathcal S)=t(q-1)N_R
 =\sum_{z\in Z}\sigma(A_z,\mathcal S_z).
\end{equation}
The exact parameter in the fixed-scale normalization \eqref{eq:intro-fixed} is
\begin{equation}\label{eq:fixed-K}
 K_*=(1-q^{-1})\frac{\sqrt{N_R}}{q^{(d-2)/2}}.
\end{equation}
Thus $K_*\to1$ for $d\ge4$, while in $d=3$ one may take
$K_*=(1-q^{-1})\sqrt2$.
\item Every point of $P$ has degree
\begin{equation}\label{eq:fixed-degree}
 d_{\mathcal S}(x)=q+t-1.
\end{equation}
\item If $u\ne u'$, then
\begin{equation}\label{eq:fixed-common-section}
 S_Q(c_z+uv,R)\cap S_Q(c_z+u'v,R)=A_z.
\end{equation}
All these pairs have center-difference direction $[v]$, and their radical hyperplane is
\begin{equation}\label{eq:fixed-Hz}
 H_z=\{x\in\mathbb F_q^d:\ip{x}{v}=z\}.
\end{equation}
\item If $d\ge4$ and $H$ is an affine hyperplane, then
\begin{equation}\label{eq:fixed-hyperplane-exact}
 |P\cap H|\le
 \max\{N_R,\,2tq^{d-3}\}.
\end{equation}
In particular, there is a constant $B_d>0$ such that, for all sufficiently large $q$,
\begin{equation}\label{eq:fixed-hyperplane-ratio}
 \max_H\frac{|P\cap H|}{|P|}
 \le B_d\max\left\{\frac1t,\frac1q\right\}.
\end{equation}
\end{enumerate}
\end{theorem}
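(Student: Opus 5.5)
The plan is to compute everything in the coordinates adapted to $v$. Each $x\in\mathbb F_q^d$ is described by $\ip{x}{v}$ together with $Q$-data. For a point $x\in A_b$, write $x=bw+y$ with $y\in X_R$, so that $\ip{y}{v}=0$ and $Q(y)=R$. The center of a sphere in $\mathcal S_z$ is $zw+uv$, so
\[
 Q(x-zw-uv)=Q\bigl((b-z)w+y-uv\bigr).
\]
Since $Q(v)=0$, $\ip{y}{v}=0$ and $\ip{w}{v}=1$, this expands to
\[
 (b-z)^2Q(w)+2(b-z)\ip{w}{y}+R-2u(b-z).
\]
All incidence statements will be read off from this single identity.

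\emph{Sizes, degrees and the block matrix.} The sizes \eqref{eq:fixed-sizes} are immediate from the disjointness already noted. For the diagonal entries, take $b=z$: the displayed quantity equals $R$ for every $u$, so each point of $A_z$ lies on all $q$ spheres of $\mathcal S_z$. Hence
\[
 \sigma(A_z,\mathcal S_z)=qN_R-N_R=(q-1)N_R.
\]
For $b\ne z$, the condition ``$=R$'' is a linear equation in $u$ with nonzero coefficient $-2(b-z)$. It therefore has exactly one solution, so each point of $A_b$ lies on exactly one sphere of $\mathcal S_z$, and
\[
 \sigma(A_b,\mathcal S_z)=N_R-N_Rq/q=0.
\]
Summing over blocks gives \eqref{eq:fixed-sigma}. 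The degree formula \eqref{eq:fixed-degree} then follows as $q+(t-1)$.

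\emph{The constant $K_*$.} Compute
\[
 K_*=\frac{t(q-1)N_R}{q^{(d-1)/2}\sqrt{tN_R\cdot tq}}=(1-q^{-1})\frac{\sqrt{N_R}}{q^{(d-2)/2}}.
\]
Then \eqref{eq:NR-asymptotic} gives $K_*\to1$, and $N_R=2q$ in $d=3$ gives $K_*=(1-q^{-1})\sqrt2$.

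\emph{Common sections (item 4).} Two distinct spheres with centers $c_z+uv$ and $c_z+u'v$ have, by \cref{lem:radical}, the radical hyperplane
\[
 2(u'-u)\ip{v}{x}=2(u'-u)\ip{v}{c_z}=2(u'-u)z,
\]
using $Q(c_z+uv)-Q(c_z+u'v)=2(u-u')z$. This is exactly $H_z$. On $H_z$ write $x=zw+y'$ with $y'\in v^{\perp_Q}$; the identity above (with $b=z$) shows $Q(x-c_z-uv)=Q(y')$. So the section consists of the points with $y'\in X_R$, that is, it equals $A_z$.

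\emph{Hyperplane bound (item 5).} This is the main obstacle. Let $H=\{\ip{x}{a}=\kappa\}$ and split into two cases.

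If $a\in\mathbb F_q v$, then $H$ is one of the $H_z$. It meets $P$ only in $A_z$ (or not at all), giving the bound $N_R$.

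Otherwise, for each $z$ the set $H\cap A_z$ is $c_z+\bigl(X_R\cap\{\text{an affine hyperplane of } v^{\perp_Q}\}\bigr)$. The map $y\mapsto\ip{y}{a}$ on $v^{\perp_Q}$ is either constant or a genuine linear functional.
\begin{itemize}
 \item If it is constant, then $a\in(v^{\perp_Q})^{\perp}=\mathbb F_q v$, which is the first case.
 \item If it is a genuine functional, the intersection is a level set of the quadric $Q(y)=R$ inside a $(d-2)$-dimensional affine space. Its quadratic part is $Q$ restricted to a $(d-2)$-dimensional subspace. That subspace cannot be totally singular, because the Witt index is at most $\lfloor d/2\rfloor<d-2$ when $d\ge5$. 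For $d=4$ a separate check is needed: a totally singular plane would have to contain $v$, and then the equation would reduce to a linear condition, so the same count still holds. In either situation Schwartz--Zippel gives at most $2q^{d-3}$ points per block.
\end{itemize}
Summing over the $t$ blocks gives $2tq^{d-3}$, which proves \eqref{eq:fixed-hyperplane-exact}.

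Finally, dividing by $|P|=tN_R\asymp tq^{d-2}$ yields \eqref{eq:fixed-hyperplane-ratio}. The delicate point I would verify most carefully is the nonvanishing of the restricted quadratic part in the degenerate sub-case for $d=4$.
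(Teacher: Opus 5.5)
Your items (1)--(4) are correct and follow the paper's argument. Your treatment of item (5) works in $v^{\perp_Q}$ with the Witt index instead of passing to the quotient $\overline V=v^{\perp_Q}/\mathbb F_q v$ as the paper does. That route is fine for $d\ge5$, and also whenever $Q$ does not vanish on the direction space $D=v^{\perp_Q}\cap a^{\perp_Q}$.

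\textbf{The gap.} It is exactly the $d=4$ subcase you flagged, where $D$ is a totally singular plane. There the restriction of $Q-R$ to $E_z=y_0+D$ is the affine-linear function $h\mapsto Q(y_0)-R+2\ip{y_0}{h}$. Such a ``linear condition'' gives at most $q=q^{d-3}$ points only if it is not vacuous. You never exclude that all of $E_z$ lies in the level set $\{Q=R\}$, in which case the block contributes $q^2>2q$ points.

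Your argument for item (5) never uses $R\ne0$, yet the per-block bound is false for $R=0$. Take $Q=x_1x_2+x_3x_4$ and $v=e_1$. The plane $\operatorname{span}(e_1,e_3)$ is totally singular and lies in $X_0$. A hyperplane $c_z+H_0$ with $H_0\cap v^{\perp_Q}=\operatorname{span}(e_1,e_3)$ then meets $c_z+X_0$ in $q^2$ points.

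Your observation that such a $D$ must contain $v$ is true: since $D\subseteq v^{\perp_Q}$, the space $D+\mathbb F_q v$ is totally singular and hence equals $D$. But it does not settle whether $E_z\subseteq\{Q=R\}$.

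\textbf{The missing step.} It is short:
\begin{itemize}
\item Suppose $Q(y_0+h)=R$ for every $h\in D$. Setting $h=0$ gives $Q(y_0)=R$, and then $\ip{y_0}{h}=0$ for all $h\in D$, so $y_0\in D^{\perp_Q}$.
\item A totally singular plane in a nondegenerate four-dimensional space satisfies $D\subseteq D^{\perp_Q}$ and $\dim D^{\perp_Q}=2$, so $D^{\perp_Q}=D$.
\item Hence $y_0\in D$ and $Q(y_0)=0\ne R$, a contradiction.
\end{itemize}
So the linear condition is either inconsistent or cuts out an affine line, and $|A_z\cap H|\le q$.

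This is the step the paper supplies in its quotient formulation when $\dim\overline V=2$. There, an affine line inside the nonzero level $\overline Q=R$ would have isotropic direction $L$ and base point in $L^{\perp_Q}=L$, forcing $R=0$.
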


\begin{proof}
The size identities follow from $|A_z|=N_R$ and $|\mathcal S_z|=q$.

Fix $x=c_b+y\in A_b$, where $y\in v^{\perp_Q}$ and $Q(y)=R$, and consider a sphere $S_Q(c_z+uv,R)$.  Put $a=b-z$.  Since $Q(v)=0$, $\ip{y}{v}=0$, and $\ip{w}{v}=1$,
\begin{align}
 Q(x-c_z-uv)
 &=Q(aw+y-uv)\notag\\
 &=R+a^2Q(w)+2a\ip{w}{y}-2au.
 \label{eq:fixed-membership}
\end{align}
If $b=z$, the right-hand side is $R$ for every $u$, so every sphere of $\mathcal S_z$ contains all of $A_z$.  If $b\ne z$, equation \eqref{eq:fixed-membership} equals $R$ for exactly one value of $u$.  Hence
\[
 I(A_b,\mathcal S_z)=
 \begin{cases}
 qN_R,&b=z,\\
 N_R,&b\ne z,
 \end{cases}
\]
and subtracting the baseline $|A_b||\mathcal S_z|/q=N_R$ proves \eqref{eq:fixed-cross}.  Summation proves \eqref{eq:fixed-sigma}.  Substitution of \eqref{eq:fixed-sizes} into \eqref{eq:intro-fixed} gives \eqref{eq:fixed-K}.  The asymptotics follow from \eqref{eq:NR-asymptotic}, and the degree formula follows from the same membership calculation: a point lies on all $q$ spheres in its own block and exactly one sphere in each of the other $t-1$ blocks.

For $u\ne u'$, subtracting the two sphere equations gives $\ip{x}{v}=z$.  Hence any point in their ambient intersection can be written $x=c_z+y$ with $y\in v^{\perp_Q}$; either sphere equation then reduces to $Q(y)=R$.  Thus the ambient intersection is exactly $A_z$, proving \eqref{eq:fixed-common-section}, and the same subtraction proves \eqref{eq:fixed-Hz}.

It remains to prove the hyperplane bound.  Write $H=\{x:\ell(x)=a_0\}$ with $\ell\ne0$.  If $\ell$ vanishes on $v^{\perp_Q}$, then $\ker\ell=v^{\perp_Q}$.  Since $\ell(w)\ne0$, the values of $\ell$ on the blocks $A_z$ are distinct, so $H$ meets at most one block and $|P\cap H|\le N_R$.

Suppose that $\ell|_{v^{\perp_Q}}\ne0$.  Fix $z$.  If $\ell(v)\ne0$, every quotient class in $\overline V$ has exactly one representative satisfying the affine equation defining $H-c_z$; hence
\[
 |A_z\cap H|= |\{\bar y:\overline Q(\bar y)=R\}|=N_R/q\le2q^{d-3},
\]
where the final inequality is the Schwartz--Zippel bound for the nonzero polynomial $\overline Q-R$.  If $\ell(v)=0$, then $\ell$ descends to a nonzero linear functional $\bar\ell$ on $\overline V$.  The restriction of $\overline Q-R$ to the affine hyperplane $\bar\ell=\text{constant}$ is a nonzero polynomial.  Indeed, if it vanished identically, then $\overline Q$ would vanish on $\ker\bar\ell$.  When $\dim\overline V\ge3$, this contradicts the Witt-index bound.  When $\dim\overline V=2$, an affine line contained in the nonzero level $\overline Q=R$ would have isotropic direction $L$ and base point in $L^{\perp_Q}=L$, forcing $R=0$.  Thus Schwartz--Zippel gives at most $2q^{d-4}$ quotient points, each with $q$ lifts along $\mathbb F_q v$, and again
\[
 |A_z\cap H|\le2q^{d-3}.
\]
Summing over $z$ proves \eqref{eq:fixed-hyperplane-exact}.  Finally \eqref{eq:NR-asymptotic} gives $N_R\asymp_d q^{d-2}$ for large $q$, and \eqref{eq:fixed-hyperplane-ratio} follows.
\end{proof}

\subsection{Consequences in the moderate range}

\begin{corollary}[Fixed-radius block obstructions in the family-only range]\label{cor:fixed-no-proportional}
The exact systems of \cref{thm:fixed-block} meet the point-independent lower condition $|\mathcal S|\ge4q/K_*^2$ as follows.
\begin{enumerate}[label=\textup{(\arabic*)}]
\item Fix $d\ge4$ and $C_1\ge1$.  For all sufficiently large odd $q$, every nondegenerate quadratic space $(\mathbb F_q^d,Q)$ admits a point set $P$ and a family $\mathcal S$ of distinct spheres of one fixed nonzero radius such that
\begin{equation}\label{eq:fixed-moderate-window}
 \frac{4q}{K_*^2}\le|\mathcal S|
 \le C_1K_*q^{(d-1)/2},
\end{equation}
while
\begin{equation}\label{eq:fixed-no-hyperplane}
 \max_H\frac{|P\cap H|}{|P|}\longrightarrow0.
\end{equation}
The number of exact centered blocks tends to infinity.
\item Let $d=3$, fix an integer $t\ge3$, and assume $C_1>t/\sqrt2$.  For all sufficiently large odd $q$, every nondegenerate ternary quadratic space admits a system with exactly $t$ blocks, one fixed nonzero radius, and
\begin{equation}\label{eq:d3-family-only-window}
 \frac{4q}{K_*^2}\le|\mathcal S|=tq
 \le C_1K_*q,
 \qquad
 K_*=(1-q^{-1})\sqrt2.
\end{equation}
Each diagonal block carries exactly $1/t$ of the total surplus and all cross-block centered discrepancies vanish.
\end{enumerate}
Consequently, the family-only lower range is not free of obstructions.  In $d\ge4$, no lower bound
\[
 |P\cap H|\ge cK_*^{-C}|P|
\]
with fixed $c>0$ and $C<\infty$ can follow from fixed-scale surplus and the moderate window, even for one nonzero radius.  For every fixed $\delta>0$, the examples in part~\textup{(1)} also satisfy $|P\cap L|\le\delta|P|$ for every proper affine flat $L$, and hence for every affine totally singular flat, once $q$ is sufficiently large.
\end{corollary}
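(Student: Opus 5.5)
The plan is to specialize the block system of \cref{thm:fixed-block} by choosing the number of blocks $t=|Z|$, and then read off each assertion from the exact formulas \eqref{eq:fixed-sizes}, \eqref{eq:fixed-K} and \eqref{eq:fixed-hyperplane-ratio}. Since $|\mathcal S|=tq$, the lower window condition $|\mathcal S|\ge4q/K_*^2$ is equivalent to $t\ge4/K_*^2$. The upper condition $tq\le C_1K_*q^{(d-1)/2}$ is equivalent to $t\le C_1K_*q^{(d-3)/2}$.

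For part (1), with $d\ge4$, \eqref{eq:NR-asymptotic} gives $K_*\to1$, so for large $q$ the lower constraint reads $t\ge5$. The upper constraint allows $t$ up to about $C_1q^{(d-3)/2}\ge C_1q^{1/2}$. I would choose $t=\lfloor q^{1/2}\rfloor$ (capped by $q$, since $Z\subseteq\mathbb F_q$). Both constraints then hold for large $q$ because $C_1\ge1$ and $K_*\ge 1-o(1)$; the borderline factor $K_*$ may be slightly less than $1$, so I would take $t=\lfloor \tfrac12 q^{1/2}\rfloor$ for safety. Then $t\to\infty$, and \eqref{eq:fixed-hyperplane-ratio} gives $\max_H|P\cap H|/|P|\le B_d\max\{1/t,1/q\}\to0$, which is \eqref{eq:fixed-no-hyperplane}. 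The nonexistence of a bound $|P\cap H|\ge cK_*^{-C}|P|$ follows at once: $K_*\to1$, so the right side tends to $c>0$ while the left ratio tends to $0$. For proper affine flats, I would note that every proper flat lies in some affine hyperplane, so the same ratio bound applies. Totally singular affine flats are proper, since a nondegenerate $Q$ is not identically zero.

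For part (2), with $d=3$, choose $R$ with $N_R=2q$ as allowed after \eqref{eq:NR-asymptotic}. This gives $K_*=(1-q^{-1})\sqrt2$ exactly. The lower constraint becomes $t\ge4/K_*^2=2/(1-q^{-1})^2$, which holds for $t\ge3$ once $q$ is large. The upper constraint becomes $t\le C_1(1-q^{-1})\sqrt2$, which holds for large $q$ because $C_1\sqrt2>t$ strictly. The block statements come from \eqref{eq:fixed-cross}–\eqref{eq:fixed-sigma}: each diagonal block contributes $(q-1)N_R$, which is a $1/t$ share of the total, and the cross terms vanish.

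The main obstacle is the word "every" in "every nondegenerate quadratic space". It requires that a nonzero value $R$ with $N_R\asymp q^{d-2}$ (respectively $N_R=2q$ for $d=3$) exists for each form. The isotropic vector $v$ needed to build the quotient exists by the isotropy remark. Existence of a suitable $R$ then reduces to level-set counts for the nondegenerate quotient form $\overline Q$ in dimension $d-2$, which I would take from the standard evaluation cited from \cite{LN}. In $d=3$ the quotient is one-dimensional, so any nonzero square class value of $\overline Q$ gives exactly two quotient solutions and hence $N_R=2q$.
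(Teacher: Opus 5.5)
Your proposal is correct and follows essentially the same route as the paper. In $d\ge4$ you specialize \cref{thm:fixed-block} with $t\asymp\sqrt q$ blocks; the paper takes $t=\lfloor C_1\sqrt q/8\rfloor$, and your $\lfloor\tfrac12\sqrt q\rfloor$ works equally well because $C_1\ge1$ and $K_*\to1$. You then read the window and hyperplane bounds off \eqref{eq:fixed-K} and \eqref{eq:fixed-hyperplane-ratio}, and in $d=3$ you take $N_R=2q$, which reduces the window to $2/(1-q^{-1})^2\le t\le C_1(1-q^{-1})\sqrt2$, exactly as the paper does (your justifications of the totally singular clause and of the existence of $R$ are slightly more explicit than the paper's).
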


\begin{proof}
For part~\textup{(1)}, use \cref{thm:fixed-block} with
\[
 t=\left\lfloor\frac{C_1\sqrt q}{8}\right\rfloor.
\]
By \eqref{eq:fixed-K}, $K_*\to1$.  Thus $t\to\infty$, the lower bound in \eqref{eq:fixed-moderate-window} holds for large $q$, and
\[
 |\mathcal S|=tq\le\frac{C_1}{8}q^{3/2}
 \le C_1K_*q^{(d-1)/2}
\]
for all sufficiently large $q$.  Equation \eqref{eq:fixed-no-hyperplane} follows from \eqref{eq:fixed-hyperplane-ratio}.  Every proper affine flat is contained in an affine hyperplane.

For part~\textup{(2)}, choose $R$ so that $N_R=2q$, as noted above.  Then \eqref{eq:fixed-K} gives $K_*=(1-q^{-1})\sqrt2$.  The lower inequality in \eqref{eq:d3-family-only-window} is equivalent to
\[
 t\ge\frac{2}{(1-q^{-1})^2},
\]
which holds for every fixed $t\ge3$ once $q$ is sufficiently large.  The upper inequality is equivalent to $t\le C_1(1-q^{-1})\sqrt2$, which follows for large $q$ from $C_1>t/\sqrt2$.  The centered-block assertions are \eqref{eq:fixed-cross} and \eqref{eq:fixed-sigma}.
\end{proof}

\begin{table}[ht]
\centering
\small
\caption{Location of the explicit obstruction families relative to the point-independent lower condition.}
\label{tab:range-obstructions}
\begin{tabularx}{\textwidth}{@{}P{0.27\textwidth}P{0.16\textwidth}Y Y@{}}
\toprule
Construction & Dimension & $|\mathcal S|\ge4q/K^2$? & Structural behavior \\
\midrule
Fixed-radius blocks, $t\asymp\sqrt q$ & $d\ge4$ & Yes & Unbounded exact direct sum; no macroscopic hyperplane or pair section. \\
Fixed-radius blocks, fixed $t\ge3$ & $d=3$ & Yes, when $C_1>t/\sqrt2$ & Genuine finite centered block system; each block carries $1/t$ of the surplus. \\
Rational-normal-curve design & $d\ge3$ & No & Complete rich-pair graph, diffuse parameter geometry, quantitative block indecomposability. \\
Fixed-radius twisted cubic & $d=3$ & No & Same diffuse behavior at the exact point-sensitive lower boundary. \\
\bottomrule
\end{tabularx}
\end{table}

The block construction is an exact model for the second branch of an atomicity dichotomy.  It also rules out the first branch at every fixed positive scale.

\begin{corollary}[No surplus-heavy sphere-pair section]\label{cor:no-heavy-section}
Assume $d\ge4$.  There is a constant $B_d'>0$ such that, for all sufficiently large $q$ and every pair of distinct spheres $S,S'\in\mathcal S$,
\begin{equation}\label{eq:no-heavy-section}
 \frac{\sigma(P\cap S\cap S',\mathcal S)}
 {\sigma(P,\mathcal S)}
 \le\frac{2q^{d-2}}{tN_R}
 \le\frac{B_d'}{t}.
\end{equation}
On the other hand, the partitions in \eqref{eq:fixed-global-def} satisfy
\begin{equation}\label{eq:block-surplus-share}
 \sigma(A_z,\mathcal S)=\sigma(A_z,\mathcal S_z)=(q-1)N_R
 =\frac1t\sigma(P,\mathcal S),
 \qquad
 \sigma(A_b,\mathcal S_z)=0\quad(b\ne z).
\end{equation}
Thus when $t\to\infty$, no sphere-pair section carries a fixed positive fraction of the surplus, while the second alternative is realized by an exact centered-incidence block decomposition.
\end{corollary}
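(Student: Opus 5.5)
The argument splits into the section bound \eqref{eq:no-heavy-section} and the block identity \eqref{eq:block-surplus-share}; both reduce to \cref{thm:fixed-block}.

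\textbf{The section bound.} Write $X=P\cap S\cap S'$ for distinct $S,S'\in\mathcal S$. By \eqref{eq:fixed-degree}, every point of $P$ has the same degree $q+t-1$. Hence every point carries the same excess
\[
 e_{\mathcal S}(x)=q+t-1-\frac{tq}{q}=q-1>0 .
\]
The surplus $\sigma(\cdot,\mathcal S)$ is additive over points and constant per point. Therefore
\[
 \frac{\sigma(X,\mathcal S)}{\sigma(P,\mathcal S)}=\frac{|X|}{|P|}=\frac{|X|}{tN_R}.
\]
This is consistent with \eqref{eq:fixed-sigma}, since $tN_R(q-1)=\sigma(P,\mathcal S)$. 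It then remains to bound $|X|\le|S\cap S'|\le2q^{d-2}$, which is \eqref{eq:pair-bound} in \cref{lem:radical} (valid since $d\ge4\ge3$). Concentric pairs cannot occur here because all radii are equal and the spheres are distinct; in any case they give an empty section. Finally, \eqref{eq:NR-asymptotic} gives $N_R\ge\tfrac12q^{d-2}$ for $q$ large. So $2q^{d-2}/(tN_R)\le 4/t$, and one may take $B_d'=4$, or any constant coming from $N_R\asymp_d q^{d-2}$.

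\textbf{The block identity.} Here $\sigma(A_z,\mathcal S)=\sum_b\sigma(A_z,\mathcal S_b)$, by additivity of the incidence count over the disjoint union of families. By \eqref{eq:fixed-cross}, every cross term vanishes and the diagonal term is $(q-1)N_R$. Comparing with \eqref{eq:fixed-sigma} gives the share $1/t$. The vanishing of $\sigma(A_b,\mathcal S_z)$ for $b\ne z$ is restated directly from \eqref{eq:fixed-cross}.

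The final sentence follows because $B_d'/t\to0$ as $t\to\infty$. No step is a real obstacle. The only point needing care is the observation that constant degree makes surplus proportional to cardinality, so that the pair codegree bound transfers to surplus shares.
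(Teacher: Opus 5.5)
Your proposal is correct and follows the paper's proof essentially step for step: the constant degree $q+t-1$ gives every point the uniform excess $q-1$, so surplus shares equal cardinality shares; the codegree bound $|S\cap S'|\le2q^{d-2}$ from \cref{lem:radical} together with $N_R\asymp_d q^{d-2}$ then gives \eqref{eq:no-heavy-section}; and \eqref{eq:block-surplus-share} is \eqref{eq:fixed-cross} plus additivity over the sphere blocks. Your explicit choice $B_d'=4$ goes slightly beyond the paper, which leaves the constant unspecified. It is valid once $N_R\ge\frac12q^{d-2}$, and this holds for large $q$ because $(d-1)/2<d-2$ when $d\ge4$.
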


\begin{proof}
Equation \eqref{eq:fixed-degree} and $|\mathcal S|/q=t$ show that every point of $P$ has the same positive excess degree $q-1$.  Hence
\[
 \sigma(A,\mathcal S)=(q-1)|A|
 \qquad(A\subseteq P).
\]
By \cref{lem:radical}, every distinct sphere pair has at most $2q^{d-2}$ common points.  Dividing by \eqref{eq:fixed-sigma} gives the first inequality in \eqref{eq:no-heavy-section}, and \eqref{eq:NR-asymptotic} gives the second.  Equation \eqref{eq:block-surplus-share} follows from \eqref{eq:fixed-cross} and additivity over the sphere blocks.
\end{proof}

\subsection{An overwhelmingly popular direction still need not concentrate}

\begin{proposition}[Popular direction distributed over a parallel class]\label{prop:fixed-popular}
Let $E_{[v]}$ denote the coincidence energy contributed by ordered pairs of distinct spheres whose center difference has projective direction $[v]$.  In the construction of \cref{thm:fixed-block},
\begin{equation}\label{eq:fixed-direction-energy}
 E_{[v]}=tq(q-1)N_R,
\end{equation}
and
\begin{equation}\label{eq:fixed-direction-ratio}
 \frac{E_{[v]}}{E_{\mathrm{off}}(P,\mathcal S)}
 =\frac{q(q-1)}{(q+t-1)(q+t-2)}.
\end{equation}
Consequently, if $t=o(q)$, one direction carries $1-o(1)$ of the full off-diagonal energy, but that energy is divided among the $t$ parallel hyperplanes $H_z$, each meeting $P$ in exactly $N_R=|P|/t$ points.
\end{proposition}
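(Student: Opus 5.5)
Both identities follow from bookkeeping on the membership formula \eqref{eq:fixed-membership} and the degree formula \eqref{eq:fixed-degree}. We compute $E_{\mathrm{off}}(P,\mathcal S)$ exactly, then isolate the pairs in direction $[v]$.

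\emph{Total off-diagonal energy.} By \eqref{eq:energy-decomp}, $E_{\mathrm{off}}=\sum_{p\in P}d_{\mathcal S}(p)(d_{\mathcal S}(p)-1)$. By \eqref{eq:fixed-degree} every point has degree $q+t-1$, so
\[
 E_{\mathrm{off}}(P,\mathcal S)=tN_R(q+t-1)(q+t-2).
\]

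\emph{The $[v]$-directional part.} The centers are $zw+uv$, and a center difference is $(z-z')w+(u-u')v$. Since $v,w$ are linearly independent ($\ip{w}{v}=1$ while $Q(v)=0$ gives $\ip{v}{v}=0$), this difference has direction $[v]$ exactly when $z=z'$ and $u\ne u'$. So the $[v]$-pairs are the ordered pairs of distinct spheres inside one block $\mathcal S_z$. There are $t\,q(q-1)$ of them. By \eqref{eq:fixed-common-section}, each has common section $A_z$, and $A_z\subseteq P$, so each contributes $|P\cap A_z|=N_R$. Summing,
\[
 E_{[v]}=tq(q-1)N_R,
\]
which is \eqref{eq:fixed-direction-energy}. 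Dividing by the total energy above gives \eqref{eq:fixed-direction-ratio}.

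\emph{Consequences.} If $t=o(q)$, the ratio $q(q-1)/\bigl((q+t-1)(q+t-2)\bigr)$ tends to $1$. The pairs in $\mathcal S_z$ have radical hyperplane $H_z$ by \eqref{eq:fixed-Hz}. Every $x\in A_b$ satisfies $\ip{x}{v}=b$, so $P\cap H_z=A_z$, which has $N_R=|P|/t$ points by \eqref{eq:fixed-sizes}. Hence the dominant directional energy is split evenly across the $t$ parallel hyperplanes $H_z$.

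The only point needing care is the identification of the $[v]$-direction pairs, which rests on the linear independence of $v$ and $w$. Everything else is direct substitution.
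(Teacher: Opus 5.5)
Your proof is correct and follows the paper's argument: you identify the $[v]$-direction pairs as the $tq(q-1)$ ordered intra-block pairs, each contributing $N_R$ by \eqref{eq:fixed-common-section}, and you compute $E_{\mathrm{off}}=|P|D(D-1)$ from the constant degree $D=q+t-1$. Your explicit check that $v$ and $w$ are linearly independent (since $\ip{w}{v}=1$ while $\ip{v}{v}=Q(v)=0$) fills in a step the paper asserts without comment.
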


\begin{proof}
Pairs with center difference in direction $[v]$ are exactly the ordered pairs of distinct members of one $\mathcal S_z$.  There are $tq(q-1)$ such pairs, and \eqref{eq:fixed-common-section} shows that each contributes $N_R$, proving \eqref{eq:fixed-direction-energy}.

Every point has degree $D=q+t-1$ by \eqref{eq:fixed-degree}.  Hence
\[
 E_{\mathrm{off}}(P,\mathcal S)
 =|P|D(D-1)
 =tN_R(q+t-1)(q+t-2).
\]
Division gives \eqref{eq:fixed-direction-ratio}.  The final assertion follows from \eqref{eq:fixed-Hz} and the disjointness of the blocks.
\end{proof}

\subsection{Diffuse rational-normal-curve systems in every dimension}\label{subsec:rnc}

Szab\'o recently used rational normal curves in physical point space to construct sets with uniformly small intersections with $Q$-quadrics \cite{Szabo}.  The construction below is dual in spirit but has a different incidence purpose: the curve lies in lifted sphere-parameter space, and physical points are chosen so that their incidence hyperplanes cut out prescribed root sets on that curve.

We next construct a diffuse obstruction in every dimension.  Assume $d\ge3$.  A Witt decomposition gives
\[
 \mathbb F_q^d=\mathbb F_q e\oplus\mathbb F_q f\oplus V_0,
 \qquad
 Q(xe+yf+z)=xy+Q_\perp(z),
\]
where $Q_\perp$ is nondegenerate on the $(d-2)$-dimensional space $V_0$.  Fix a basis $u_1,\ldots,u_{d-2}$ of $V_0$ and put
\[
 z_t=\sum_{j=1}^{d-2}t^j u_j,
 \qquad
 c_t=t^{d-1}f+z_t
 \qquad(t\in\mathbb F_q).
\]
Set
\[
 \gamma=-1-Q(c_1),
 \qquad
 g(t)=Q(c_t)+t^d+\gamma t^{d-1}.
\]
Since $g(0)=g(1)=0$, the map $g:\mathbb F_q\to\mathbb F_q$ is not injective and hence, on a finite set of the same cardinality as its codomain, is not surjective.  Choose
\[
 \kappa\notin-g(\mathbb F_q)
\]
and define
\begin{equation}\label{eq:rnc-spheres}
 b_t=t^d+\gamma t^{d-1}+\kappa,
 \qquad
 r_t=Q(c_t)+b_t,
 \qquad
 S_t=S_Q(c_t,r_t).
\end{equation}
Then every $r_t$ is nonzero.

Let $\mathcal M_\gamma$ be the set of monic polynomials
\[
 F(T)=T^d+a_{d-1}T^{d-1}+\cdots+a_1T+a_0
 \quad\text{with }a_{d-1}\ne\gamma.
\]

\begin{proposition}[Polynomial-root chart]\label{prop:rnc-chart}
Put
\[
 U=\{xe+yf+z:x\in\mathbb F_q^\times,\ y\in\mathbb F_q,\ z\in V_0\}.
\]
The map $\Psi:U\to\mathcal M_\gamma$ defined by
\begin{equation}\label{eq:rnc-chart}
 \Psi(xe+yf+z)(T)
 =T^d+(x+\gamma)T^{d-1}
  +\sum_{j=1}^{d-2}2\ip{z}{u_j}T^j
  +\kappa-Q(xe+yf+z)
\end{equation}
is a bijection.  Most importantly,
\begin{equation}\label{eq:rnc-root-incidence}
 p\in S_t\quad\Longleftrightarrow\quad\Psi(p)(t)=0.
\end{equation}
\end{proposition}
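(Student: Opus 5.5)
The plan is to verify the root-incidence identity \eqref{eq:rnc-root-incidence} by a direct computation, and then to read off bijectivity by comparing coefficients.

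For the identity, fix $p=xe+yf+z\in U$ and expand the sphere equation of $S_t$. By the lifted form used in \cref{lem:coaxal}, $p\in S_t$ holds exactly when
\[
 Q(p)-2\ip{c_t}{p}=r_t-Q(c_t)=b_t .
\]
Next compute $\ip{c_t}{p}$ in the Witt decomposition. The bilinear form satisfies $\ip{e}{f}=\tfrac12$ and $\ip{f}{f}=0$, and $f$ is orthogonal to $V_0$. Hence
\[
 \ip{t^{d-1}f}{p}=\tfrac12 t^{d-1}x,
 \qquad
 \ip{z_t}{p}=\sum_j t^j\ip{u_j}{z}.
\]
Substituting, the condition becomes
\[
 Q(p)-t^{d-1}x-\sum_j 2\ip{z}{u_j}t^j=t^d+\gamma t^{d-1}+\kappa.
\]
Rearranged, this says precisely that $\Psi(p)(t)=0$, with $\Psi(p)$ the monic degree-$d$ polynomial in \eqref{eq:rnc-chart}. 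The only care needed is the factor $\tfrac12$ in $\ip{e}{f}$ and the sign conventions; I will also check that $\Psi$ lands in $\mathcal M_\gamma$, which holds because $x\ne0$ gives $a_{d-1}=x+\gamma\ne\gamma$.

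For bijectivity, I recover $p$ from a given $F\in\mathcal M_\gamma$ coefficient by coefficient.
\begin{enumerate}
\item Set $x=a_{d-1}-\gamma\in\mathbb F_q^\times$.
\item The coefficients $a_1,\ldots,a_{d-2}$ prescribe $\ip{z}{u_j}=a_j/2$ for $j=1,\ldots,d-2$. Since $Q_\perp$ is nondegenerate on $V_0$ and $u_1,\ldots,u_{d-2}$ is a basis, the map $z\mapsto(\ip{z}{u_j})_j$ is an isomorphism $V_0\to\mathbb F_q^{d-2}$. So $z$ is uniquely determined.
\item The constant term requires $Q(p)=\kappa-a_0$, that is, $xy+Q_\perp(z)=\kappa-a_0$. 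Because $x\ne0$, this determines $y$ uniquely.
\end{enumerate}
Thus $\Psi$ has a two-sided inverse. As a consistency check, $|U|=(q-1)q^{d-1}=|\mathcal M_\gamma|$.

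I expect no real obstacle here. The main risk is bookkeeping in the bilinear-form normalization: the paper's $\ip{\cdot}{\cdot}$ carries a factor $\tfrac12$, so $\ip{e}{f}=\tfrac12$, and this must match the coefficient $x$ (rather than $2x$ or $x/2$) in $\Psi$. I will therefore carry out the expansion of $\ip{c_t}{p}$ first and carefully.
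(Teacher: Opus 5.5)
Your proposal is correct and follows essentially the same route as the paper: you reduce $p\in S_t$ to $Q(p)-2\ip{p}{c_t}=b_t$, expand using $\ip{e}{f}=\tfrac12$ and $f\perp V_0$, and invert $\Psi$ coefficient by coefficient ($x$ from $a_{d-1}$, then $z$ from $a_1,\dots,a_{d-2}$ via the nondegenerate pairing on $V_0$, then $y$ by dividing by $x\ne0$). Your explicit check that $\Psi(U)\subseteq\mathcal M_\gamma$ and your cardinality comparison are small additions that the paper leaves implicit.
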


\begin{proof}
The linear map
\[
 V_0\longrightarrow\mathbb F_q^{d-2},
 \qquad
 z\longmapsto(2\ip{z}{u_1},\ldots,2\ip{z}{u_{d-2}})
\]
is an isomorphism by nondegeneracy of $Q_\perp$.  Given the coefficients of $F\in\mathcal M_\gamma$, first set $x=a_{d-1}-\gamma\ne0$, then recover the unique $z\in V_0$ from the coefficients $a_1,\ldots,a_{d-2}$, and finally set
\[
 y=\frac{\kappa-Q_\perp(z)-a_0}{x}.
\]
This gives the inverse of \eqref{eq:rnc-chart}.

For $p=xe+yf+z$, the sphere equation and \eqref{eq:rnc-spheres} give
\begin{align*}
 Q(p-c_t)=r_t
 &\Longleftrightarrow
 Q(p)-2\ip{p}{c_t}=b_t\\
 &\Longleftrightarrow
 t^d+(x+\gamma)t^{d-1}
 +\sum_{j=1}^{d-2}2\ip{z}{u_j}t^j
 +\kappa-Q(p)=0,
\end{align*}
which is \eqref{eq:rnc-root-incidence}.
\end{proof}

Let
\[
 \mathfrak A_{d,\gamma}=
 \left\{A\in\binom{\mathbb F_q}{d}:-\sum_{a\in A}a\ne\gamma\right\}.
\]
For $A\in\mathfrak A_{d,\gamma}$, let $p_A$ be the point corresponding under \cref{prop:rnc-chart} to the polynomial $\prod_{a\in A}(T-a)$, and put
\begin{equation}\label{eq:rnc-system}
 P_{\mathrm{rn}}=\{p_A:A\in\mathfrak A_{d,\gamma}\},
 \qquad
 \mathcal S_{\mathrm{rn}}=\{S_t:t\in\mathbb F_q\}.
\end{equation}

\begin{theorem}[Diffuse rational-normal-curve sphere systems]\label{thm:rnc-system}
Fix $d\ge3$.  For all sufficiently large odd $q$, the system in \eqref{eq:rnc-system} has the following properties.  Write $N=|P_{\mathrm{rn}}|$ and $\Sigma=\sigma(P_{\mathrm{rn}},\mathcal S_{\mathrm{rn}})$.
\begin{enumerate}[label=\textup{(\arabic*)}]
\item Every sphere has nonzero radius.  The lifted parameters $\Phi(S_t)$ are affinely equivalent to
\[
 t\longmapsto(t,t^2,\ldots,t^d)\in\mathbb F_q^d.
\]
For every $0\le k\le d-1$, no affine $k$-flat in their $d$-dimensional affine span contains more than $k+1$ lifted parameters.  In particular, no coaxal pencil contains three members of $\mathcal S_{\mathrm{rn}}$.
\item The map $A\mapsto p_A$ is injective,
\begin{equation}\label{eq:rnc-N-bounds}
 \binom qd-\frac1d\binom q{d-1}
 \le N\le\binom qd,
 \qquad
 N=\frac{q^d}{d!}+O_d(q^{d-1}),
\end{equation}
and
\begin{equation}\label{eq:rnc-incidence}
 p_A\in S_t\quad\Longleftrightarrow\quad t\in A.
\end{equation}
Consequently every point has degree $d$.  For distinct $t,u$,
\begin{equation}\label{eq:rnc-pair-count}
 \binom{q-2}{d-2}-\frac1{d-2}\binom{q-2}{d-3}
 \le |P_{\mathrm{rn}}\cap S_t\cap S_u|
 \le\binom{q-2}{d-2}.
\end{equation}
\item One has
\begin{equation}\label{eq:rnc-surplus}
 I(P_{\mathrm{rn}},\mathcal S_{\mathrm{rn}})=dN,
 \qquad
 \Sigma=(d-1)N,
 \qquad
 K_q=\frac{(d-1)\sqrt N}{q^{d/2}}
 \longrightarrow\frac{d-1}{\sqrt{d!}}.
\end{equation}
Moreover,
\begin{equation}\label{eq:rnc-lower-exact}
 K_q^2q^{d-1}|\mathcal S_{\mathrm{rn}}|=(d-1)^2N\ge4N.
\end{equation}
For all sufficiently large $q$, the system lies in the moderate window of \cref{cor:moderate} with $C_1=2$, and its rich-pair graph is complete.
\item Every $B\subseteq P_{\mathrm{rn}}$ satisfies
\begin{equation}\label{eq:rnc-uniform-excess}
 \sigma(B,\mathcal S_{\mathrm{rn}})=(d-1)|B|.
\end{equation}
Hence, uniformly over distinct $t,u$,
\begin{equation}\label{eq:rnc-no-atom}
 \frac{\sigma(P_{\mathrm{rn}}\cap S_t\cap S_u,\mathcal S_{\mathrm{rn}})}{\Sigma}
 =O_d(q^{-2}).
\end{equation}
\end{enumerate}
\end{theorem}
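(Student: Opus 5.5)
The plan is to derive almost everything from the polynomial-root chart of \cref{prop:rnc-chart}, and then handle the lifted-parameter geometry in part (1) separately.

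\textbf{Part (1).} Nonzero radii were arranged in the construction: $r_t=g(t)+\kappa\ne0$ because $\kappa\notin-g(\mathbb F_q)$. For the lifts, I would compute
\[
\Phi(S_t)=(2c_t,\,r_t-Q(c_t))=(2t^{d-1}f+2z_t,\;t^d+\gamma t^{d-1}+\kappa).
\]
In the coordinates $u_1,\dots,u_{d-2},f$ and the last coordinate, this is an affine image of $(t,\dots,t^{d-2},t^{d-1},t^d+\gamma t^{d-1})$. A unipotent linear change of variables removes the $\gamma t^{d-1}$ term, giving the moment curve; its span is a $d$-dimensional affine space inside $\mathbb F_q^{d+1}$. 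For the flat statement, suppose $k+2$ distinct parameters lie in an affine $k$-flat with $k\le d-1$. Then their moment vectors $(1,t,\dots,t^d)$ span at most $k+1$ dimensions. But any $k+2\le d+1$ of them are independent by the Vandermonde determinant, a contradiction. The case $k=1$ gives the coaxal statement via \cref{lem:coaxal}.

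\textbf{Part (2).} Injectivity holds because $\Psi$ is a bijection and a monic split squarefree polynomial determines its root set $A$. Incidence \eqref{eq:rnc-incidence} is \eqref{eq:rnc-root-incidence} applied to $\prod_{a\in A}(T-a)$, whose roots are exactly $A$. For the counts, the condition $a_{d-1}\ne\gamma$ says $-\sum A\ne\gamma$. To bound the excluded sets, fix any $(d-1)$-subset: at most one last element makes the sum equal $-\gamma$, and each excluded $d$-set is counted $d$ times. So at most $\frac1d\binom q{d-1}$ sets are excluded. The pair count is the same argument applied to $A\supseteq\{t,u\}$, with $d-2$ free elements and the bad set counted $d-2$ times. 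Since $d\ge3$, the count is nonempty for large $q$; when $d=3$ one should check that $\frac1{1}\binom{q-2}{0}=1$ still gives the displayed bound.

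\textbf{Parts (3) and (4).} Every point has degree $d$, so $I=dN$. With $|\mathcal S|=q$ the baseline is $N$, so $\Sigma=(d-1)N$ and $K_q=(d-1)\sqrt N/q^{d/2}$. The limit follows from \eqref{eq:rnc-N-bounds}, and \eqref{eq:rnc-lower-exact} is direct substitution, using $d-1\ge2$. For the moderate window with $C_1=2$, the upper condition needs $q\le 2K_qq^{(d-1)/2}$, which holds for large $q$ since $K_q$ tends to a positive constant and $d\ge3$. For completeness of the rich-pair graph, the threshold $K_q q^{(d-1)/2}/(4C_1)\asymp q^{(d-1)/2}$ must be compared with the pair sections $\asymp q^{d-2}/(d-2)!$. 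Here $d-2\ge(d-1)/2$, with equality only at $d=3$.

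\textbf{Main obstacle.} The case $d=3$ is where I expect the real difficulty. There I must compare constants: the section size is about $q$, while the threshold is about $K_q q/8\approx q\cdot 2/(8\sqrt6)<q$. This works with margin, but it has to be checked explicitly rather than by exponents.

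Finally, \eqref{eq:rnc-uniform-excess} holds because every point has excess $d-1$. The ratio in \eqref{eq:rnc-no-atom} is then $|P\cap S_t\cap S_u|/N\le\binom{q-2}{d-2}/N=O_d(q^{-2})$.
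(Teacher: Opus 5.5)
Your proposal is correct and follows the paper's proof essentially step for step: the root chart gives incidence and injectivity, the deletion double count handles both the excluded sets and the pair sections, Vandermonde gives general position of the lifts, and the constant excess degree $d-1$ yields parts (3)--(4). One spot to tighten is the $d=3$ upper moderate condition $q\le 2K_q q$: it needs $\lim K_q=2/\sqrt{6}>1/2$, not merely a positive limit, and this is the same constant you already compute in your rich-pair check.
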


\begin{proof}
The radii are nonzero by the choice of $\kappa$.  Relative to the basis $u_1,\ldots,u_{d-2},f$ and the last lift coordinate,
\[
 \Phi(S_t)=(2t,2t^2,\ldots,2t^{d-2},2t^{d-1},t^d+\gamma t^{d-1}+\kappa).
\]
An invertible affine change of coordinates sends this to $(t,t^2,\ldots,t^d)$.  If $t_0,\ldots,t_{k+1}$ are distinct, the augmented vectors
\[
 (1,t_i,t_i^2,\ldots,t_i^d)
\]
have rank $k+2$ by the Vandermonde determinant.  Thus those $k+2$ points are affinely independent, proving part~(1).

Part~(2) follows from the root chart.  To count the excluded sets, observe that every $d$-set with prescribed sum gives $d$ distinct $(d-1)$-subsets by deleting one element, while each $(d-1)$-subset has at most one completion to the prescribed sum.  Hence at most $d^{-1}\binom q{d-1}$ sets are excluded, proving \eqref{eq:rnc-N-bounds}.  The same deletion argument with two fixed labels shows that at most $(d-2)^{-1}\binom{q-2}{d-3}$ of the $\binom{q-2}{d-2}$ candidate $d$-sets in a pair section are excluded.  This proves \eqref{eq:rnc-pair-count}.

Since $|\mathcal S_{\mathrm{rn}}|=q$ and every point has degree $d$, the identities in \eqref{eq:rnc-surplus} are immediate.  Equation \eqref{eq:rnc-lower-exact} follows by substitution and uses $d\ge3$.  The moderate upper bound with $C_1=2$ follows from the positive limit of $K_q$ when $d=3$, and from $q=o(q^{(d-1)/2})$ when $d\ge4$.  The lower bound in \eqref{eq:rnc-pair-count} is asymptotic to $q^{d-2}/(d-2)!$; it therefore exceeds $K_qq^{(d-1)/2}/8$ for large $q$, including $d=3$.  Thus every pair is rich at the threshold of \cref{cor:moderate}.

Finally the baseline point degree is $|\mathcal S_{\mathrm{rn}}|/q=1$, so every point has excess degree $d-1$, proving \eqref{eq:rnc-uniform-excess}.  Divide the upper bound in \eqref{eq:rnc-pair-count} by \eqref{eq:rnc-N-bounds} to obtain \eqref{eq:rnc-no-atom}.
\end{proof}

\begin{remark}[Point-sensitive versus family-only lower ranges]\label{rem:rnc-lower-range}
The systems in \cref{thm:rnc-system} satisfy the exact point-sensitive lower condition
\[
 K_q^2q^{d-1}|\mathcal S_{\mathrm{rn}}|=(d-1)^2|P_{\mathrm{rn}}|,
\]
and in dimension three this is equality in the threshold $K^2q^{d-1}|\mathcal S|\ge4|P|$.  They do not satisfy the stronger point-independent sufficient condition $|\mathcal S|\ge4q/K_q^2$: indeed $|\mathcal S|=q$ whereas $4q/K_q^2>q$ for all sufficiently large $q$.  Accordingly, the diffuse obstruction disproves a two-branch label-coherence theorem under the point-sensitive hypotheses of \cref{cor:moderate}, but it does not settle whether a diffuse indecomposable branch exists in the denser family-only subrange.  Exact block obstructions do occur there by \cref{cor:fixed-no-proportional}.
\end{remark}

\begin{definition}[Root-evaluation incidence system]\label{def:root-system}
For $d\ge3$ and $\gamma\in\mathbb F_q$, let $\mathcal D_{d,\gamma}$ be the bipartite incidence system whose object set is $\mathbb F_q$, whose point set is
\[
 \mathfrak A_{d,\gamma}
 =\left\{A\in\binom{\mathbb F_q}{d}:-\sum_{a\in A}a\ne\gamma\right\},
\]
and in which $A$ is incident to $t$ exactly when $t\in A$.  Its centered discrepancy is defined using the baseline $1/q$.  The sphere system of \cref{thm:rnc-system} is incidence-isomorphic to $\mathcal D_{d,\gamma}$ for the value of $\gamma$ chosen in its construction.
\end{definition}

\begin{theorem}[Hypergraph universality on one lifted rational normal curve]\label{thm:hypergraph-universality}
Fix $d\ge3$ and use the sphere family $\mathcal S_{\mathrm{rn}}=\{S_t:t\in\mathbb F_q\}$ and points $p_A$ from \cref{prop:rnc-chart}.  For every $d$-uniform hypergraph
\[
 \mathcal G\subseteq\mathfrak A_{d,\gamma}
\]
on the label set $\mathbb F_q$, put
\[
 P_{\mathcal G}=\{p_A:A\in\mathcal G\}.
\]
Then
\begin{equation}\label{eq:universality-incidence}
 p_A\in S_t\quad\Longleftrightarrow\quad t\in A,
\end{equation}
so the point--sphere incidence graph of $(P_{\mathcal G},\mathcal S_{\mathrm{rn}})$ is exactly the membership graph of $\mathcal G$.  If $m=|\mathcal G|$, then
\begin{equation}\label{eq:universality-parameters}
 I(P_{\mathcal G},\mathcal S_{\mathrm{rn}})=dm,
 \qquad
 \sigma(P_{\mathcal G},\mathcal S_{\mathrm{rn}})=(d-1)m,
 \qquad
 K_{\mathcal G}=\frac{(d-1)\sqrt m}{q^{d/2}}.
\end{equation}
In particular,
\begin{equation}\label{eq:universality-lower}
 K_{\mathcal G}^2q^{d-1}|\mathcal S_{\mathrm{rn}}|
 =(d-1)^2m\ge4m,
\end{equation}
so every nonempty member of this universal class satisfies the point-sensitive diagonal-dominance condition of the scale-adaptive inverse theorem.

For each fixed $\rho>0$, all systems with $m\ge\rho|\mathfrak A_{d,\gamma}|$ have $K_{\mathcal G}\asymp_{d,\rho}1$.  For sufficiently large $q$ they lie in the moderate range with $C_1=1$ when $d\ge4$, and with every fixed
\[
 C_1>\sqrt{\frac{3}{2\rho}}
\]
when $d=3$.
\end{theorem}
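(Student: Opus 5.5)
The incidence statement \eqref{eq:universality-incidence} needs no new work: it is the root-incidence identity \eqref{eq:rnc-root-incidence} of \cref{prop:rnc-chart} applied to $\Psi(p_A)=\prod_{a\in A}(T-a)$. This polynomial lies in $\mathcal M_\gamma$ precisely because $A\in\mathfrak A_{d,\gamma}$: its $T^{d-1}$ coefficient is $-\sum_{a\in A}a\ne\gamma$. Its roots in $\mathbb F_q$ are exactly the elements of $A$, so $p_A\in S_t$ iff $t\in A$. Injectivity of $A\mapsto p_A$ follows from bijectivity of $\Psi$ and unique factorization. Hence $|P_{\mathcal G}|=m$, and the incidence graph is the membership graph of $\mathcal G$.

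For \eqref{eq:universality-parameters}, every point $p_A$ lies on exactly $|A|=d$ spheres, so $I=dm$. Since $|\mathcal S_{\mathrm{rn}}|=q$, the baseline is $m\cdot q/q=m$, which gives $\sigma=(d-1)m$. Solving $\sigma=K q^{(d-1)/2}\sqrt{mq}$ for $K$ yields $K_{\mathcal G}=(d-1)\sqrt m/q^{d/2}$. Substituting into $K^2q^{d-1}\cdot q$ gives $(d-1)^2m$, and $(d-1)^2\ge4$ because $d\ge3$. This proves \eqref{eq:universality-lower}, and it is exactly \eqref{eq:point-sensitive-lower} for the pair $(P_{\mathcal G},\mathcal S_{\mathrm{rn}})$.

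For the density statement, combine $\rho|\mathfrak A_{d,\gamma}|\le m\le|\mathfrak A_{d,\gamma}|$ with \eqref{eq:rnc-N-bounds}, $|\mathfrak A_{d,\gamma}|=q^d/d!+O_d(q^{d-1})$. This gives
\[
 (d-1)^2\rho/d!\,(1+o(1))\le K_{\mathcal G}^2\le(d-1)^2/d!\,(1+o(1)),
\]
so $K_{\mathcal G}\asymp_{d,\rho}1$. The lower end of the moderate window \eqref{eq:moderate-window} is already \eqref{eq:universality-lower}. The upper end requires $q\le C_1K_{\mathcal G}q^{(d-1)/2}$.
\begin{itemize}
\item For $d\ge4$, we have $q^{(d-1)/2}\ge q^{3/2}$ and $K_{\mathcal G}$ is bounded below by a positive constant, so $C_1=1$ suffices for large $q$.
\item For $d=3$, the condition is $C_1K_{\mathcal G}\ge1$. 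Here $K_{\mathcal G}^2\ge 4\rho/6\,(1+o(1))=\tfrac{2\rho}{3}(1+o(1))$, so any fixed $C_1>\sqrt{3/(2\rho)}$ works once $q$ is large.
\end{itemize}

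The only delicate point is this last asymptotic bookkeeping: the lower bound on $N$ must be used with the $o(1)$ error absorbed by the strict inequality on $C_1$. Everything else is a direct substitution.
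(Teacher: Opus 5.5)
Your proposal is correct and follows essentially the same route as the paper: you read the incidence identity off the root chart, get the parameters from the constant degree $d$ and the baseline $m$, and derive the moderate-range claims from $|\mathfrak A_{d,\gamma}|=q^d/d!+O_d(q^{d-1})$, with the $o(1)$ error absorbed by the strict inequality on $C_1$. You are slightly more explicit than the paper in checking that $\prod_{a\in A}(T-a)\in\mathcal M_\gamma$ and that $A\mapsto p_A$ is injective; the paper instead takes injectivity from \cref{thm:rnc-system}.
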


\begin{proof}
Equation \eqref{eq:universality-incidence} is \eqref{eq:rnc-root-incidence} restricted to the chosen set of split polynomials.  Every point has degree exactly $d$ and $|\mathcal S_{\mathrm{rn}}|=q$, so the baseline contribution is $m$ and \eqref{eq:universality-parameters} follows.  Equation \eqref{eq:universality-lower} is immediate and uses $d\ge3$.

By \eqref{eq:rnc-N-bounds},
\[
 |\mathfrak A_{d,\gamma}|=\frac{q^d}{d!}+O_d(q^{d-1}).
\]
Thus $m\ge\rho|\mathfrak A_{d,\gamma}|$ gives
\[
 K_{\mathcal G}\ge(d-1)\sqrt{\frac\rho{d!}}+o_{d,\rho}(1).
\]
The moderate upper condition is $q\le C_1K_{\mathcal G}q^{(d-1)/2}$.  It is automatic for large $q$ when $d\ge4$, and the displayed condition on $C_1$ makes it hold when $d=3$.
\end{proof}

The universality theorem gives a precise obstruction to a global finite-model Freiman theorem at the fixed-scale threshold.  It is stronger than exhibiting several exceptional configurations: even with the sphere family and its lifted rational normal curve fixed once and for all, the point side can encode arbitrary dense uniform hypergraphs.

\begin{corollary}[Exponential metric entropy for individual templates]\label{cor:no-freiman-list}
Fix $d\ge3$ and $0<\varepsilon<1/2$, and put
\[
 N_q=|\mathfrak A_{d,\gamma}|.
\]
Let $\mathfrak T_q$ be any family of point sets in $\mathbb F_q^d$ satisfying
\[
 \log|\mathfrak T_q|=o(N_q).
\]
For all sufficiently large $q$, there is a hypergraph $\mathcal G\subseteq\mathfrak A_{d,\gamma}$ with
\begin{equation}\label{eq:middle-hypergraph}
 \frac{N_q}{3}\le|\mathcal G|\le\frac{2N_q}{3}
\end{equation}
and
\begin{equation}\label{eq:no-template-close}
 |P_{\mathcal G}\triangle T|>\varepsilon N_q
 \qquad(T\in\mathfrak T_q).
\end{equation}
Every resulting point--sphere system has a fixed-scale parameter bounded above and below by positive constants depending only on $d$, satisfies the point-sensitive lower condition, and lies in the moderate range for large $q$ (with $C_1=1$ if $d\ge4$, and with any $C_1>3/\sqrt2$ if $d=3$).

Consequently, any classification of all constant-$K$ fixed-scale systems up to $o(q^d)$ point edits requires $\exp(\Omega_d(q^d))$ distinct templates, even when the sphere family and its lifted rational normal curve are fixed.  In particular, no finite, polynomial-size, or more generally subexponential list of individual point-set templates can classify the full fixed-scale class in this edit metric.
\end{corollary}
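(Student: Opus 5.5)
The argument is a Hamming-ball counting argument on the cube $\{0,1\}^{\mathfrak A_{d,\gamma}}$, using injectivity of $A\mapsto p_A$ from \cref{thm:rnc-system}(2). That injectivity gives $|P_{\mathcal G}|=|\mathcal G|$ and $|P_{\mathcal G}\triangle P_{\mathcal G'}|=|\mathcal G\triangle\mathcal G'|$.

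\textbf{Step 1: reduce each template to at most one hypergraph.} For $T\in\mathfrak T_q$, call $\mathcal G$ \emph{captured} by $T$ if $|P_{\mathcal G}\triangle T|\le\varepsilon N_q$. Suppose $\mathcal G,\mathcal G'$ are both captured by the same $T$. Then the triangle inequality gives $|\mathcal G\triangle\mathcal G'|\le2\varepsilon N_q$. Fix any hypergraph $\mathcal G_T$ captured by $T$, if one exists. Every hypergraph captured by $T$ then lies in the Hamming ball of radius $2\varepsilon N_q$ around $\mathcal G_T$ in $\{0,1\}^{N_q}$. Points of $T$ outside $\{p_A\}$ only increase the distance, so they do not affect this.

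\textbf{Step 2: compare ball sizes with the middle layer.} The standard entropy bound gives that this ball has at most $2^{H(2\varepsilon)N_q}$ elements, where $H$ is binary entropy and $2\varepsilon<1$. The number of $\mathcal G$ satisfying \eqref{eq:middle-hypergraph} is at least $\binom{N_q}{\lfloor N_q/2\rfloor}\ge2^{N_q}/(N_q+1)$.

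The main obstacle is that $H(2\varepsilon)$ tends to $1$ as $\varepsilon\to1/2$, so Step 2 alone does not show the balls fail to cover the middle layer. Working with radius $\varepsilon N_q$ instead of $2\varepsilon N_q$ does not help: every captured hypergraph lies within $\varepsilon N_q$ of the template's trace $\{A:p_A\in T\}$, but the entropy $H(\varepsilon)$ still approaches $1$ as $\varepsilon\to1/2$. The fix is the subexponential hypothesis. For each fixed $\varepsilon<1/2$ we have $1-H(\varepsilon)=c_\varepsilon>0$. Since $\log|\mathfrak T_q|=o(N_q)$, the union of all balls then has size $|\mathfrak T_q|\,2^{H(\varepsilon)N_q}=2^{(1-c_\varepsilon+o(1))N_q}$. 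This is smaller than $2^{N_q}/(N_q+1)$ for large $q$, so some middle-layer $\mathcal G$ is uncaptured. That $\mathcal G$ satisfies \eqref{eq:no-template-close}.

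\textbf{Step 3: check the remaining properties with \cref{thm:hypergraph-universality}.} Apply it with $\rho=1/3$. From \eqref{eq:universality-parameters} and $N_q=q^d/d!+O_d(q^{d-1})$, with $m\in[N_q/3,2N_q/3]$, we get $K_{\mathcal G}\asymp_d1$. The point-sensitive condition is \eqref{eq:universality-lower}. The moderate range holds with $C_1=1$ for $d\ge4$, and for $d=3$ with $C_1>\sqrt{3/(2\rho)}=3/\sqrt2$.

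\textbf{Step 4: the closing consequence.} Any list classifying all constant-$K$ systems up to $o(q^d)$ edits, hence within $\varepsilon N_q\asymp q^d$ edits for any fixed $\varepsilon$, must satisfy $\log|\mathfrak T_q|\ge c N_q$. This gives $|\mathfrak T_q|\ge\exp(\Omega_d(q^d))$.
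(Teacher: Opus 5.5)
Your final argument is correct and is essentially the paper's proof. Each captured $\mathcal G$ lies within Hamming distance $\varepsilon N_q$ of the trace $\{A:p_A\in T\}$, which amounts to replacing $T$ by $T\cap P_{\mathrm{rn}}$; you then compare $|\mathfrak T_q|\,2^{H(\varepsilon)N_q}$ with the middle-layer count and finish with \cref{thm:hypergraph-universality} at $\rho=1/3$.

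Drop the Step~1 detour through a ball of radius $2\varepsilon N_q$ around a captured $\mathcal G_T$. It is unnecessary, and as stated it is wrong: the bound $2^{H(\alpha)N}$ on a Hamming ball of radius $\alpha N$ needs $\alpha\le1/2$, so it fails once $\varepsilon\ge1/4$. The accompanying ``obstacle'' discussion is also muddled. The subexponential hypothesis is simply what the counting needs, not a fix for a defect of the radius-$\varepsilon N_q$ argument.
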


\begin{proof}
Replacing each template $T$ by $T\cap P_{\mathrm{rn}}$ can only decrease its symmetric-difference distance from every $P_{\mathcal G}$.  The map $\mathcal G\mapsto P_{\mathcal G}$ is an isometry between symmetric-difference metrics on subsets of $\mathfrak A_{d,\gamma}$ and subsets of $P_{\mathrm{rn}}$.  The number of subsets satisfying \eqref{eq:middle-hypergraph} is
\[
 2^{N_q}-2\sum_{j<N_q/3}\binom{N_q}{j}
 =\exp\bigl((\log2-o(1))N_q\bigr).
\]
On the other hand, a Hamming ball of radius $\varepsilon N_q$ contains at most
\[
 \sum_{j\le\varepsilon N_q}\binom{N_q}{j}
 =\exp\bigl((h(\varepsilon)+o(1))N_q\bigr)
\]
subsets, where
\[
 h(\varepsilon)=-\varepsilon\log\varepsilon-(1-\varepsilon)\log(1-\varepsilon)<\log2.
\]
Since $\log|\mathfrak T_q|=o(N_q)$, the union of the corresponding balls cannot cover all middle-sized subsets for large $q$.  This proves \eqref{eq:no-template-close}.  The quantitative incidence assertions follow from \cref{thm:hypergraph-universality} with $\rho=1/3$.
\end{proof}

\begin{definition}[Quantitatively nontrivial centered blocks]\label{def:centered-blocks}
Let $\Sigma=\sigma(P,\mathcal S)>0$, let $0<\tau\le1$, and let $\varepsilon\ge0$.  A $(\tau,\varepsilon)$-centered block decomposition consists of an integer $m\ge2$ and matched partitions
\[
 \mathcal S=\mathcal S_1\sqcup\cdots\sqcup\mathcal S_m,
 \qquad
 P=A_1\sqcup\cdots\sqcup A_m\sqcup R,
\]
such that
\begin{align}
 \sigma(A_i,\mathcal S_i)&\ge\tau\Sigma
 &&(1\le i\le m),\label{eq:block-diagonal}\\
 \sigma(R,\mathcal S)&\le\varepsilon\Sigma,\label{eq:block-remainder}\\
 \sum_{i\ne j}\sigma(A_i,\mathcal S_j)_+&\le\varepsilon\Sigma,
 \qquad x_+=\max\{x,0\}.\label{eq:block-cross}
\end{align}
The diagonal lower bound prevents vacuous refinement into arbitrarily small blocks.  Controlling only positive cross discrepancy is weaker than controlling the sum of absolute cross discrepancies.
\end{definition}

\begin{theorem}[Quantitative centered-block indecomposability]\label{thm:rnc-block-indecomp}
Fix $d\ge3$ and put
\[
 c_d=\frac{d-1}{32d^2},
 \qquad
 C_d=8d(d+1).
\]
Uniformly for every $\gamma\in\mathbb F_q$, every $0<\tau\le1$, and every odd $q\ge C_d/\tau$, the root-evaluation system $\mathcal D_{d,\gamma}$ admits no $(\tau,\varepsilon)$-centered block decomposition with
\[
 \varepsilon\le c_d\tau^2.
\]
In particular, this conclusion holds for the rational-normal-curve sphere system of \cref{thm:rnc-system}.  More quantitatively, writing $P$ and $\mathcal S$ for the two sides of $\mathcal D_{d,\gamma}$ and $\Sigma=\sigma(P,\mathcal S)$, every matched partition satisfying \eqref{eq:block-diagonal} and
\[
 \sigma(R,\mathcal S)\le c_d\tau^2\Sigma
\]
obeys
\begin{equation}\label{eq:rnc-cross-lower}
 \sum_{i\ne j}\sigma(A_i,\mathcal S_j)_+
 \ge2c_d\tau^2\Sigma.
\end{equation}
\end{theorem}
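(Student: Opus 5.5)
The plan is to reduce everything to counting $d$-sets by how many of their labels fall in each block, using two facts. First, every point $A$ of $\mathcal D_{d,\gamma}$ has degree exactly $d$ against a baseline of $|\mathcal S|/q=1$. Second, a block $A_i$ can carry a large diagonal surplus only if many of its $d$-sets lie entirely inside $\mathcal S_i$, and there are few such sets.

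Write $N=|P|$, $\Sigma=(d-1)N$, $s_i=|\mathcal S_i|$, $x_i=s_i/q$ (so $\sum_i x_i=1$) and $a_i=|A_i|/N$. Let $F_i\subseteq A_i$ be the set of $d$-sets contained in $\mathcal S_i$, and put $f_i=|F_i|/N$. I would carry out the following steps in order.

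\emph{Step 1 (remainder is small).} By \eqref{eq:rnc-uniform-excess}, $\sigma(R,\mathcal S)=(d-1)|R|$. The hypothesis then gives $|R|\le c_d\tau^2N$, and hence $\sum_i a_i\ge 1-c_d\tau^2$.

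\emph{Step 2 (block sizes and label masses are bounded below).} Since $\sigma(A_i,\mathcal S_i)\le\sum_{A\in A_i}|A\cap\mathcal S_i|\le d|A_i|$, the diagonal condition \eqref{eq:block-diagonal} gives $a_i\ge(d-1)\tau/d$. In particular $m\le 2/\tau$.

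The number of $d$-sets meeting $\mathcal S_i$ is at most $s_i\binom{q-1}{d-1}\approx d\,x_iN$, using \eqref{eq:rnc-N-bounds} with $q\ge C_d/\tau$ to absorb errors. This gives $x_i\gtrsim\tau(d-1)/d^2$, so every $x_i$ is bounded away from $0$ and, because $m\ge2$, also from $1$.

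Next I use $f_i\le\binom{s_i}{d}/N\le x_i^d(1+O_d(1/q))$. The factor $d(d+1)$ in $C_d$ should exactly cover the relative error $\binom{s}{d}/\binom{q}{d}-x^d$ and the excluded-sum sets.

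\emph{Step 3 (exact formula for the cross discrepancy).} Since $\sum_j\sigma(A_i,\mathcal S_j)=(d-1)|A_i|$, we have
\[
\sum_{j\ne i}\sigma(A_i,\mathcal S_j)=\sum_{A\in A_i}\bigl(d-|A\cap\mathcal S_i|\bigr)-|A_i|(1-x_i).
\]
Every $A\notin F_i$ contributes at least $1$ to the first sum. Since positive parts dominate the sum, this gives
\[
\sum_{i\ne j}\sigma(A_i,\mathcal S_j)_+\ \ge\ N\sum_i\bigl(a_ix_i-f_i\bigr).
\]

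The diagonal condition, written with the same bookkeeping, reads
\[
a_ix_i+(d-1)\tau\le(d-1)a_i+f_i .
\]

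\emph{Step 4 (finite-dimensional optimization; the main obstacle).} It remains to minimize $\sum_i(a_ix_i-f_i)$ subject to the constraints
\[
0\le f_i\le\min(a_i,x_i^d),\qquad \sum_i x_i=1,\qquad \sum_i a_i\ge1-c_d\tau^2,
\]
together with the diagonal inequality from Step 3 and the lower bounds of Step 2. The target is to show the minimum is at least $2c_d\tau^2(d-1)$.

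The heuristic is as follows. If $a_i\ge x_i^{d-1}$, the term satisfies
\[
a_ix_i-f_i\ge x_i\bigl(a_i-x_i^{d-1}\bigr)\ge a_ix_i\bigl(1-x_i^{d-2}\bigr).
\]
Since $d\ge3$ and $x_i\le1-\Omega(\tau/d)$, this is $\gtrsim\tau a_ix_i/d$, which is $\gtrsim\tau^2/d^2$ after Step 2.

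If instead $a_i<x_i^{d-1}$, then $f_i\le a_i$, and the diagonal inequality forces $f_i\ge(d-1)\tau-(d-1)a_i+a_ix_i$. This is only possible when $a_i$ is not much larger than $\tau$. In that case I would trade this block against the blocks with large $a_j$, using $\sum_i a_i\approx\sum_i x_i=1$, so that a deficit in one term is paid for by the convexity gap $\sum_i a_ix_i-\sum_i x_i^d$ elsewhere.

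I expect that tracking the constants through this case analysis, so that exactly $c_d=(d-1)/(32d^2)$ emerges uniformly in $m\le2/\tau$, is where the real work lies. Uniformity in $\gamma$ is automatic, because only the counts from \eqref{eq:rnc-N-bounds} are used.

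Finally, suppose a $(\tau,\varepsilon)$-decomposition existed with $\varepsilon\le c_d\tau^2$. Then \eqref{eq:block-remainder} holds with $\sigma(R,\mathcal S)\le c_d\tau^2\Sigma$, so \eqref{eq:rnc-cross-lower} forces the positive cross sum to be at least $2c_d\tau^2\Sigma>\varepsilon\Sigma$, contradicting \eqref{eq:block-cross}. The sphere-system statement then follows from the incidence isomorphism in \cref{def:root-system}.
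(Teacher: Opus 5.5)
There is a genuine gap in Step 4. Steps 1--3 are valid inequalities. But once you replace $\sum_{i\ne j}\sigma(A_i,\mathcal S_j)_+$ by the signed quantity $N\sum_i(a_ix_i-f_i)$, no positive lower bound remains to be proved, even for genuine admissible partitions.

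Here is a counterexample. Take $m=2$ and $0<\tau<1/(2d)$. Let $\mathcal S_1$ consist of about $\tau q$ labels and $\mathcal S_2$ the rest. Let $A_2$ be the points whose $d$-set lies inside $\mathcal S_2$, and put $A_1=P\setminus A_2$, $R=\varnothing$. Up to $O_d(1/q)$ errors, $a_2=f_2\approx(1-\tau)^d>1/2$ and $a_1\approx1-(1-\tau)^d\le d\tau$. Both diagonal conditions hold. For block $2$, $\sigma(A_2,\mathcal S_2)=(d-1+\tau)|A_2|>\tau\Sigma$, since $|A_2|>N/2$. For block $1$, $A_2$ has no incidences with $\mathcal S_1$, so
\[
\sigma(A_1,\mathcal S_1)=I(P,\mathcal S_1)-\tau|A_1|\approx(d\tau-\tau a_1)N\ge d\tau(1-\tau)N>\tau\Sigma .
\]
Yet the fully internal block contributes $a_2x_2-f_2=-\tau a_2<-\tau/2$. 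Block $1$ contributes at most $\tau a_1\lesssim d\tau^2<\tau/2$. Hence $\sum_i(a_ix_i-f_i)<0$.

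Since this is an actual partition satisfying all hypotheses, no argument working only with your relaxed quantity can reach $2c_d\tau^2(d-1)$. Your case analysis also misreads the diagonal inequality. With $f_i\le a_i$ it yields only the \emph{lower} bound $a_i(d-x_i)\ge(d-1)\tau$, and the block $A_2$ above has $a_2<x_2^{d-1}$ with $a_2$ close to $1$.

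The missing idea is to keep the positive parts blockwise and normalize by label mass before summing. Write $a_ix_i-f_i=x_ig_i$ with $g_i=a_i-f_i/x_i$. Your signed sum weights the negative $g_2$ by $x_2\approx1$ and the positive $g_1$ by $x_1\approx\tau$. Keeping positive parts lets you replace every weight by a uniform lower bound $\rho=\tau(d-1)/(2d)\le\min_ix_i$:
\[
\frac1N\sum_{i\ne j}\sigma(A_i,\mathcal S_j)_+\ge\sum_i(x_ig_i)_+\ge\rho\sum_i(g_i)_+\ge\rho\sum_ig_i .
\]
The unweighted sum is then controlled by convexity. First, $f_i/x_i\le x_i^{d-1}q^d/(d!N)$. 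Second, $\sum_ix_i^{d-1}\le(\max_ix_i)^{d-2}\le1-\rho$, using $\sum_ix_i=1$, $\max_ix_i\le1-\rho$ (which follows from $m\ge2$), and $d\ge3$. Your Step 1 gives $\sum_ia_i\ge1-\rho/4$, since $c_d\tau^2\le\rho/4$. The hypothesis $q\ge C_d/\tau$ gives $d!N/q^d\ge1-\rho/4$. Together these give $\sum_ig_i\ge\rho/4$, hence exactly $\rho^2N/4=2c_d\tau^2\Sigma$.

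Getting $\min_ix_i\ge\rho$ with this $\rho$ needs one more change to Step 2. Bound $\sigma(A_i,\mathcal S_i)\le I(P,\mathcal S_i)\le s_i\binom{q-1}{d-1}$ directly; that quantity counts incidences, not sets. This gives $x_i\ge\tau(d-1)d!N/(dq^d)\ge\rho$. Your route through $d$ times the number of $d$-sets meeting $\mathcal S_i$ loses a factor $d$ and would not reach the stated $c_d$.
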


\begin{proof}
Identify each object with its label $t\in\mathbb F_q$ and each point with its $d$-element incidence neighborhood.  Put $N=|P|$.  The deletion argument used in the proof of \eqref{eq:rnc-N-bounds} is uniform in $\gamma$, and gives
\[
 \binom qd-\frac1d\binom q{d-1}\le N\le\binom qd.
\]
Every point has degree $d$, $|\mathcal S|=q$, and therefore $\Sigma=(d-1)N$.  Write
\[
 s_i=|\mathcal S_i|,
 \qquad
 \theta_i=\frac{s_i}{q},
 \qquad
 n_i=|A_i|,
\]
and let $n_i^0$ be the number of points of $A_i$ whose entire incidence neighborhood lies in $\mathcal S_i$.  Put
\[
 \mu=\frac{d!N}{q^d},
 \qquad
 \rho=\frac{\tau(d-1)}{2d}.
\]
The lower bound for $N$ gives the explicit estimate
\begin{align}
 \mu
 &\ge \prod_{j=0}^{d-1}\left(1-\frac jq\right)-\frac1q\notag\\
 &\ge1-\frac{d(d-1)/2+1}{q}
 \ge1-\frac\rho4,
 \label{eq:rnc-mu}
\end{align}
where the last inequality follows from $q\ge C_d/\tau$ and $d(d-1)/2+1\le(d+1)(d-1)$.

A fixed object belongs to at most $\binom{q-1}{d-1}\le q^{d-1}/(d-1)!$ points.  If $D_i=\sigma(A_i,\mathcal S_i)$, then
\[
 D_i\le I(P,\mathcal S_i)
 \le\frac{d\theta_iq^d}{d!}.
\]
On the other hand, \eqref{eq:block-diagonal} and $\Sigma=(d-1)N$ give
\[
 D_i\ge\tau(d-1)N
 =\frac{\tau(d-1)\mu q^d}{d!}.
\]
Thus $\theta_i\ge2\rho\mu\ge\rho$.  Since $m\ge2$, also $\theta_i\le1-\rho$ for every $i$.

For a point $e\in A_i$, let $k_i(e)$ be the number of its $d$ incident object labels lying in $\mathcal S_i$.  Summing the signed cross discrepancies leaving the point block $A_i$ gives
\[
 C_i:=\sum_{j\ne i}\sigma(A_i,\mathcal S_j)
 =\sum_{e\in A_i}\bigl((d-k_i(e))-(1-\theta_i)\bigr).
\]
A point internal to $\mathcal S_i$ contributes $-(1-\theta_i)$, whereas every other point contributes at least $\theta_i$.  Therefore
\begin{equation}\label{eq:rnc-row-cross}
 C_i\ge\theta_i n_i-n_i^0.
\end{equation}
Put
\[
 X=\sum_{i\ne j}\sigma(A_i,\mathcal S_j)_+.
\]
Using \eqref{eq:rnc-row-cross}, $\theta_i\ge\rho$, and the elementary inequality $\sum_j(x_j)_+\ge(\sum_jx_j)_+$, we obtain
\begin{align}
 X
 &\ge\sum_i(C_i)_+
 \ge\sum_i(\theta_i n_i-n_i^0)_+\notag\\
 &\ge\rho\sum_i\left(n_i-\frac{n_i^0}{\theta_i}\right)_+
 \ge\rho\left(\sum_i n_i-\sum_i\frac{n_i^0}{\theta_i}\right)_+.
 \label{eq:rnc-X}
\end{align}

Now
\[
 n_i^0\le\binom{s_i}{d}\le\frac{\theta_i^dq^d}{d!},
\]
so
\begin{align*}
 \sum_i\frac{n_i^0}{\theta_i}
 &\le\frac{q^d}{d!}\sum_i\theta_i^{d-1}
 \le\frac{q^d}{d!}(1-\rho)^{d-2}\\
 &\le\frac{q^d}{d!}(1-\rho)
 \le\left(1-\frac\rho2\right)N.
\end{align*}
The last inequality follows from $\mu\ge1-\rho/4$ and
\[
 \frac{1-\rho}{1-\rho/4}\le1-\frac\rho2.
\]
Since every point has full-family excess degree $d-1$, the remainder hypothesis implies
\[
 |R|\le c_d\tau^2N\le\frac\rho4N;
\]
the last inequality holds because $0<\tau\le1$ and $d\ge3$.
Thus $\sum_i n_i=N-|R|\ge(1-\rho/4)N$.  Substitution in \eqref{eq:rnc-X} yields
\[
 X\ge\frac{\rho^2}{4}N
 =\frac{\tau^2(d-1)}{16d^2}\Sigma
 =2c_d\tau^2\Sigma,
\]
which proves \eqref{eq:rnc-cross-lower} and the theorem.
\end{proof}

\subsection{A fixed-radius twisted-cubic obstruction in dimension three}\label{subsec:fixed-tc}

The diffuse phenomenon persists after fixing one prescribed nonzero radius in dimension three.  Assume $\operatorname{char}\mathbb F_q\ne3$ (the standing hypotheses already exclude characteristic two) and put
\[
 Q_0(x,y,z)=xy+z^2.
\]
Fix $R\in\mathbb F_q^\times$ and define
\begin{equation}\label{eq:tc-spheres}
 c_t=(0,t^3,t),
 \qquad
 S_t=S_{Q_0}(c_t,R)
 \qquad(t\in\mathbb F_q).
\end{equation}
Then
\begin{equation}\label{eq:tc-lift}
 \Phi(S_t)=(0,2t^3,2t,R-t^2),
\end{equation}
which is an affine twisted cubic in its three-dimensional affine span.  The finite-projective geometry of the twisted cubic is classical; see, for example, \cite{BDMP}.

Let $\mathcal C^\times$ be the set of monic cubics
\[
 F(T)=T^3-s_1T^2+s_2T-s_3
 \qquad(s_1\ne0).
\]

\begin{proposition}[Fixed-radius cubic-root chart]\label{prop:tc-chart}
The map $\Psi:\{(x,y,z)\in\mathbb F_q^3:x\ne0\}\to\mathcal C^\times$ defined by
\begin{equation}\label{eq:tc-chart}
 \Psi(x,y,z)(T)
 =T^3-\frac1xT^2+\frac{2z}{x}T-\frac{xy+z^2-R}{x}
\end{equation}
is a bijection.  Its inverse sends $T^3-s_1T^2+s_2T-s_3$ to
\begin{equation}\label{eq:tc-chart-inverse}
 \left(\frac1{s_1},\ s_3+s_1R-\frac{s_2^2}{4s_1},\ \frac{s_2}{2s_1}\right).
\end{equation}
Moreover,
\begin{equation}\label{eq:tc-root-incidence}
 p\in S_t\quad\Longleftrightarrow\quad\Psi(p)(t)=0.
\end{equation}
\end{proposition}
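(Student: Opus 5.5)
The plan is to verify the root-incidence identity \eqref{eq:tc-root-incidence} directly from the sphere equation, in the same way as \cref{prop:rnc-chart}, and then check separately that the displayed map \eqref{eq:tc-chart-inverse} is a two-sided inverse.

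For the incidence identity, write $p=(x,y,z)$ with $x\ne0$ and $c_t=(0,t^3,t)$.  Since $Q_0(a,b,c)=ab+c^2$, we get
\[
 Q_0(p-c_t)=x(y-t^3)+(z-t)^2=xy+z^2-xt^3-2zt+t^2 .
\]
The condition $Q_0(p-c_t)=R$ is therefore
\[
 -xt^3+t^2-2zt+(xy+z^2-R)=0 .
\]
Dividing by $-x$, which is nonzero, turns this into
\[
 t^3-\frac1xt^2+\frac{2z}{x}t-\frac{xy+z^2-R}{x}=0 ,
\]
and the left-hand side is exactly $\Psi(p)(t)$.  This proves \eqref{eq:tc-root-incidence}.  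The same computation shows that $\Psi(p)$ is monic with $s_1=1/x\ne0$, so $\Psi$ does land in $\mathcal C^\times$.

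For bijectivity, read off the coefficients:
\[
 s_1=\frac1x,\qquad s_2=\frac{2z}{x},\qquad s_3=\frac{xy+z^2-R}{x}.
\]
Given $(s_1,s_2,s_3)$ with $s_1\ne0$, these equations have a unique solution.  First $x=1/s_1$.  Next $z=s_2x/2=s_2/(2s_1)$, which uses that $q$ is odd.  Finally
\[
 y=\frac{s_3x-z^2+R}{x}=s_3+s_1R-s_1z^2=s_3+s_1R-\frac{s_2^2}{4s_1}.
\]
This is exactly \eqref{eq:tc-chart-inverse}.  Because every coefficient triple with $s_1\ne0$ is solved uniquely, $\Psi$ is injective and surjective, and the displayed formula is its inverse.

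No step here is a real obstacle.  The only points needing care are the sign bookkeeping in the expansion of $Q_0(p-c_t)$ and recording where the hypotheses enter: $x\ne0$ for the division, and $q$ odd for the factor $1/2$.  The assumption $\operatorname{char}\mathbb F_q\ne3$ is not needed for the chart itself; it matters only later, for the twisted-cubic geometry of the lifts.
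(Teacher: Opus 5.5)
Your proof is correct and takes essentially the same route as the paper: expand $Q_0(p-c_t)-R$, multiply by $-x^{-1}$ to obtain both the chart formula and the root-incidence equivalence, and read off the coefficients to invert. Your explicit solution of the coefficient system for $(x,y,z)$, noting where $x\ne0$ and $q$ odd are used, is a more detailed version of the paper's remark that ``substitution verifies that the two maps are inverse.''
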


\begin{proof}
For $p=(x,y,z)$, direct expansion gives
\[
 Q_0(p-c_t)-R=xy-xt^3+z^2-2zt+t^2-R.
\]
Multiplication by $-x^{-1}$ proves \eqref{eq:tc-root-incidence} and gives \eqref{eq:tc-chart}.  Reading off the coefficients gives \eqref{eq:tc-chart-inverse}; substitution verifies that the two maps are inverse.
\end{proof}

Let
\[
 \mathfrak T=\left\{A\in\binom{\mathbb F_q}{3}:\sum_{a\in A}a\ne0\right\}.
\]
For $A\in\mathfrak T$, let $p_A$ correspond under \cref{prop:tc-chart} to $\prod_{a\in A}(T-a)$, and put
\[
 P_{\mathrm{tc}}=\{p_A:A\in\mathfrak T\},
 \qquad
 \mathcal S_{\mathrm{tc}}=\{S_t:t\in\mathbb F_q\}.
\]

\begin{theorem}[Fixed-radius diffuse twisted-cubic system]\label{thm:fixed-tc}
Let $q\ge5$ and $\operatorname{char}\mathbb F_q\notin\{2,3\}$.  For the preceding system, put $N=|P_{\mathrm{tc}}|$ and $\Sigma=\sigma(P_{\mathrm{tc}},\mathcal S_{\mathrm{tc}})$.  Then:
\begin{enumerate}[label=\textup{(\arabic*)}]
\item Every sphere has the prescribed radius $R\ne0$, the lifted parameters are pairwise distinct, and no affine line contains three of them.  Hence no coaxal pencil contains three members of $\mathcal S_{\mathrm{tc}}$.
\item One has
\begin{equation}\label{eq:tc-N}
 N=\frac{(q-1)^2(q-2)}6,
 \qquad
 p_A\in S_t\Longleftrightarrow t\in A.
\end{equation}
Consequently every point has degree three and
\begin{equation}\label{eq:tc-surplus}
 I(P_{\mathrm{tc}},\mathcal S_{\mathrm{tc}})=3N,
 \qquad
 \Sigma=2N,
 \qquad
 K_q=\frac{2\sqrt N}{q^{3/2}}\longrightarrow\sqrt{\frac23}.
\end{equation}
In particular, $K_q^2q^2|\mathcal S_{\mathrm{tc}}|=4|P_{\mathrm{tc}}|$.
\item For distinct $a,b\in\mathbb F_q$,
\begin{equation}\label{eq:tc-pairs}
 q-3\le|P_{\mathrm{tc}}\cap S_a\cap S_b|\le q-2.
\end{equation}
The system lies in the moderate window of \cref{cor:moderate} with $C_1=2$, and its rich-pair graph is complete.
\item Every $B\subseteq P_{\mathrm{tc}}$ satisfies
\[
 \sigma(B,\mathcal S_{\mathrm{tc}})=2|B|,
\]
and every distinct sphere pair obeys
\begin{equation}\label{eq:tc-no-atom}
 \frac{\sigma(P_{\mathrm{tc}}\cap S_a\cap S_b,\mathcal S_{\mathrm{tc}})}{\Sigma}
 \le\frac6{(q-1)^2}.
\end{equation}
\item The incidence system is $\mathcal D_{3,0}$ from \cref{def:root-system}.  Hence, for every $0<\tau\le1$ and all $q\ge C_3/\tau$, it has no $(\tau,\varepsilon)$-centered block decomposition with $\varepsilon\le\tau^2/144$.
\end{enumerate}
Every nondegenerate ternary quadratic form in characteristic different from two and three admits an incidence-isomorphic example with one fixed nonzero radius.
\end{theorem}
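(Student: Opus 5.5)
The plan is to run everything through the cubic-root chart of \cref{prop:tc-chart}, mirroring the proof of \cref{thm:rnc-system}.

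\textbf{Part (1).} The radius is $R$ by construction. For the lifts \eqref{eq:tc-lift}, an affine line through three of them would make the augmented vectors $(1,t_i,t_i^2,t_i^3)$ of rank at most $2$. The coordinates $2t$, $R-t^2$, $2t^3$ are an invertible affine image of $(t,t^2,t^3)$; this uses $q$ odd. A Vandermonde minor of size $3$ in three distinct $t_i$ is then nonzero, a contradiction. The same argument gives pairwise distinctness. The statement about coaxal pencils follows from \cref{lem:coaxal}, since a pencil has lifts on an affine line.

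\textbf{Parts (2) and (3).} A monic split cubic $\prod_{a\in A}(T-a)$ has $s_1=\sum_{a\in A} a$. It therefore lies in $\mathcal C^\times$ exactly when $A\in\mathfrak T$, and $\Psi^{-1}$ gives injectivity. Incidence is read off from \eqref{eq:tc-root-incidence}.

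To get the exact count of $N$, I count $3$-sets with sum $0$.
\begin{itemize}
\item Ordered triples of distinct elements with sum $0$ number $q^2$, minus those with a repeated coordinate.
\item Repeats $a=b$ force $c=-2a$, and $c\ne a$ needs $3a\ne0$, i.e.\ $a\ne0$ when $\operatorname{char}\ne3$. This gives $q-1$ triples for each of the three positions of the repeat.
\item The triple $a=b=c=0$ must be added back once.
\end{itemize}
Hence there are $q^2-3(q-1)-1=(q-1)(q-2)$ ordered triples, i.e.\ $(q-1)(q-2)/6$ sets. Then
\[
N=\binom q3-\frac{(q-1)(q-2)}6=\frac{(q-1)^2(q-2)}6 .
\]
Since every point has degree $3$ and $|\mathcal S_{\mathrm{tc}}|=q$, we get $\Sigma=2N$ and $K_q=2\sqrt N/q^{3/2}\to\sqrt{2/3}$, with $K_q^2q^2\cdot q=4N$.

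For the pair sections, $|P_{\mathrm{tc}}\cap S_a\cap S_b|$ counts third labels $c\notin\{a,b\}$ with $c\ne-(a+b)$. This gives $q-2$ or $q-3$.

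For the moderate window, the lower bound is the equality just shown. The upper bound $q\le 2K_q q$ holds since $K_q>1/2$ for large $q$. I must check small $q\ge5$ directly, or else restrict to large $q$ as \cref{cor:moderate} is only needed asymptotically. Completeness of the rich-pair graph needs $q-3\ge K_q q/8$, which holds because $K_q<1$.

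\textbf{Part (4).} The baseline degree is $1$, so every point has excess $2$, giving $\sigma(B,\mathcal S_{\mathrm{tc}})=2|B|$. Dividing $2(q-2)$ by $2N$ yields $6/((q-1)^2)$.

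\textbf{Part (5).} The system is literally $\mathcal D_{3,0}$: the excluded condition $-\sum a\ne\gamma$ with $\gamma=0$ matches $\mathfrak T$. Invoke \cref{thm:rnc-block-indecomp} with $c_3=2/288=1/144$.

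\textbf{Transfer to other forms.} Every nondegenerate ternary form is isotropic, so it is equivalent to $\lambda(xy+z^2)$ for a suitable scalar $\lambda$, or to $xy+\epsilon z^2$. Rescaling $Q$ by $\lambda$ maps $Q_0$-spheres of radius $R$ to $\lambda Q_0$-spheres of radius $\lambda R$, and a linear isometry transports spheres and points bijectively while preserving incidences.

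The main obstacle I expect is this last normalization. I need to confirm that every ternary form is similar to $Q_0$. Scaling by $\lambda$ changes the discriminant by $\lambda^3$, so both discriminant classes are reached, and ternary forms are classified by discriminant. I will also verify that the small-$q$ inequalities in part (3) really hold for all $q\ge5$, or else state them only for large $q$.
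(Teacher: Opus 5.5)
Your proposal is correct and follows the paper's proof essentially step for step. The shared steps are the cubic-root chart for incidence and injectivity, a count of sum-zero $3$-sets, the excluded completion $-a-b$ for pair sections, \cref{thm:rnc-block-indecomp} with $c_3=1/144$, and a similarity to $Q_0$. Your inclusion--exclusion on ordered triples replaces the paper's count of ordered pairs whose completion collides; note that the all-zero triple is removed, not added back, which your formula $q^2-3(q-1)-1$ already does.

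The one step you left open closes directly. Since $K_q^2=\tfrac23(1-q^{-1})^2(1-2q^{-1})$ is increasing in $q$ and equals $32/125>1/4$ at $q=5$, you get $1/2<K_q<1$ for every admissible $q\ge5$, so no restriction to large $q$ is needed.
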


\begin{proof}
The lift formula is \eqref{eq:tc-lift}.  Projection onto the coordinates $(2t,R-t^2)$ gives a nondegenerate parabola, so three distinct lifted parameters cannot be collinear.  The coaxal assertion follows from \cref{lem:coaxal}.

The root chart proves the incidence equivalence and injectivity of $A\mapsto p_A$.  There are $\binom q3$ three-element subsets in total.  The number with sum zero is $(q-1)(q-2)/6$.  Indeed, among the $q(q-1)$ ordered pairs $(a,b)$ with $a\ne b$, the completion $c=-a-b$ fails to be distinct precisely when $b=-2a$ or $a=-2b$; in characteristic different from two and three these are two disjoint families of $q-1$ pairs.  Thus there are $(q-1)(q-2)$ ordered triples of distinct elements with sum zero.  Subtraction gives \eqref{eq:tc-N}.  The identities in \eqref{eq:tc-surplus} follow because every point has degree three and $|\mathcal S_{\mathrm{tc}}|/q=1$.

For a fixed pair $a\ne b$, the third label may be any element outside $\{a,b\}$ except possibly $-a-b$, proving \eqref{eq:tc-pairs}.  Formula \eqref{eq:tc-N} gives $1/2\le K_q<1$ for $q\ge5$, so $|\mathcal S_{\mathrm{tc}}|=q\le2K_qq$, which is the moderate upper bound with $C_1=2$; the lower moderate condition is the exact identity following \eqref{eq:tc-surplus}.  Since $q-3\ge q/8>K_qq/8$, every pair is rich for $C_1=2$.

The full-family excess degree is two, proving the subset identity and \eqref{eq:tc-no-atom}.  Part~(5) follows from \cref{thm:rnc-block-indecomp}, because $c_3=1/144$.  Finally, every nondegenerate ternary form over such a field is similar to $Q_0$: isotropy splits off a hyperbolic plane, and a similarity adjusts the coefficient of the remaining one-dimensional summand.  A linear similarity preserves incidences, sends one nonzero radius to one nonzero radius, and acts invertibly on the lifted parameter space.
\end{proof}

\begin{remark}[Scope of the fixed-radius obstruction]\label{rem:tc-lower-range}
The system of \cref{thm:fixed-tc} satisfies $K_q^2q^2|\mathcal S_{\mathrm{tc}}|=4|P_{\mathrm{tc}}|$, so it lies exactly on the point-sensitive lower boundary in dimension three.  Since $|\mathcal S_{\mathrm{tc}}|=q$ and $K_q^2\to2/3$, it does not satisfy $|\mathcal S|\ge4q/K_q^2$.  Thus fixing the radius does not restore label coherence at the sharp point-sensitive threshold.  The family-only range already contains exact block systems by \cref{cor:fixed-no-proportional}; what this construction leaves open is a diffuse indecomposable example, or a theorem excluding one, in that denser range.
\end{remark}

\section{Affine hyperplanes and pinned configurations}\label{sec:applications}

\subsection{The centered affine-hyperplane design}

Let $\Omega_{\mathcal H}$ be the family of all affine hyperplanes in $\mathbb F_q^d$.  Its cardinality is
\[
 |\Omega_{\mathcal H}|=q\frac{q^d-1}{q-1}.
\]

\begin{lemma}[Affine-hyperplane design]\label{lem:hyperplane-design}
Let $A$ be the point--hyperplane incidence matrix and put $B=A-q^{-1}J_0$.  Then
\[
 BB^{\mathsf T}=q^{d-1}I-q^{-1}J,
 \qquad B\mathbf 1_{\Omega_{\mathcal H}}=0.
\]
Consequently, for every $X\subseteq\mathbb F_q^d$ and $\mathcal H\subseteq\Omega_{\mathcal H}$,
\begin{equation}\label{eq:hyperplane-complement}
 \left|I(X,\mathcal H)-\frac{|X||\mathcal H|}{q}\right|
 \le
 \sqrt{q^{d-1}|X|-\frac{|X|^2}{q}}
 \sqrt{|\mathcal H|\left(1-\frac{|\mathcal H|}{|\Omega_{\mathcal H}|}\right)}.
\end{equation}
Equality holds for a singleton $X$ and the family of all affine hyperplanes through its point.
\end{lemma}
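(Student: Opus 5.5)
The plan mirrors the proof of \cref{prop:sphere-design}. First I will compute $AA^{\mathsf T}$ for the point--hyperplane incidence matrix. Every point lies on exactly $(q^d-1)/(q-1)$ affine hyperplanes, one for each projective class of nonzero linear functionals $\ell$, the constant being forced to equal $\ell(x)$. Two distinct points $x,x'$ lie on a common hyperplane exactly when $\ell(x-x')=0$, which leaves $(q^{d-1}-1)/(q-1)$ classes. Hence
\[
AA^{\mathsf T}=\frac{q^{d-1}-1}{q-1}J+\left(\frac{q^d-1}{q-1}-\frac{q^{d-1}-1}{q-1}\right)I=\frac{q^{d-1}-1}{q-1}J+q^{d-1}I.
\]

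Next, with $B=A-q^{-1}J_0$, I expand $BB^{\mathsf T}=AA^{\mathsf T}-q^{-1}(AJ_0^{\mathsf T}+J_0A^{\mathsf T})+q^{-2}J_0J_0^{\mathsf T}$. Each row of $A$ has sum $r=(q^d-1)/(q-1)$, so $AJ_0^{\mathsf T}=J_0A^{\mathsf T}=rJ$. Also $J_0J_0^{\mathsf T}=|\Omega_{\mathcal H}|J=qrJ$. The coefficient of $J$ is therefore
\[
\frac{q^{d-1}-1}{q-1}-\frac{2r}{q}+\frac rq=\frac{q^{d-1}-1}{q-1}-\frac{q^d-1}{q(q-1)}=\frac{q^d-q-q^d+1}{q(q-1)}=-\frac1q,
\]
which gives $BB^{\mathsf T}=q^{d-1}I-q^{-1}J$. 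The identity $B\1=0$ holds because each row sum is $r=|\Omega_{\mathcal H}|/q$.

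For the inequality, I set $v=B^{\mathsf T}\1_X$, so that $\|v\|^2=\1_X^{\mathsf T}BB^{\mathsf T}\1_X=q^{d-1}|X|-|X|^2/q$. Since $v\perp\1_{\Omega_{\mathcal H}}$, the centered count equals $\langle v,\1_{\mathcal H}-\frac{|\mathcal H|}{|\Omega_{\mathcal H}|}\1\rangle$. The second vector has squared norm $|\mathcal H|(1-|\mathcal H|/|\Omega_{\mathcal H}|)$, and Cauchy--Schwarz yields \eqref{eq:hyperplane-complement}.

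For the equality case, take $X=\{x\}$ and let $\mathcal H$ be the star of $x$, so $|\mathcal H|=r$ and $|\mathcal H|/|\Omega_{\mathcal H}|=1/q$. Then $v$ has entries $1-1/q$ on the star and $-1/q$ off it. This coincides with the centered indicator of $\mathcal H$, so the two vectors are parallel and equality holds. As a check on the numbers, the left side is $r(1-1/q)$, and the right side is $\sqrt{q^{d-1}-1/q}\sqrt{r(1-1/q)}$; since $q^{d-1}-1/q=r(1-1/q)$, both sides agree. The only place needing care is the $J$-coefficient arithmetic; everything else is routine.
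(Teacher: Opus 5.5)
Your proposal is correct and follows the same route as the paper: compute $AA^{\mathsf T}=\frac{q^{d-1}-1}{q-1}J+q^{d-1}I$ from the point and pair counts, expand the centered product using the row sums and $|\Omega_{\mathcal H}|=q(q^d-1)/(q-1)$, then apply Cauchy--Schwarz to $B^{\mathsf T}\1_X$ against the centered indicator, with the point-star giving equality. Your explicit $J$-coefficient arithmetic and equality check fill in exactly the steps the paper leaves to the reader.
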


\begin{proof}
Every point lies on $(q^d-1)/(q-1)$ affine hyperplanes, while two distinct points lie on $(q^{d-1}-1)/(q-1)$ common hyperplanes.  Hence
\[
 AA^{\mathsf T}=\frac{q^{d-1}-1}{q-1}J+q^{d-1}I.
\]
Since every row of $A$ has sum $(q^d-1)/(q-1)$ and $|\Omega_{\mathcal H}|=q(q^d-1)/(q-1)$, expanding the centered product gives the matrix identity.  The proof of \eqref{eq:hyperplane-complement} is then identical to the proof of \cref{prop:sphere-design}, and the point-star example gives equality.
\end{proof}

Dropping the complement factor gives Vinh's point--hyperplane estimate \cite{Vinh} with its exact centered constant.

\subsection{A scale-adaptive point--hyperplane inverse theorem}

\begin{theorem}[Rich codimension-two sections]\label{thm:hyperplane-adaptive}
Let $q$ be any prime power, let $d\ge2$, let $X\subseteq\mathbb F_q^d$ be nonempty, and let $\mathcal H$ be a nonempty family of distinct affine hyperplanes.  Put
\[
 n=|X|,\qquad h=|\mathcal H|,\qquad I_H=I(X,\mathcal H),
 \qquad \mathcal E_H=\frac{I_H^2}{n}-I_H.
\]
If $\mathcal E_H>0$, then for every $0<\eta<1$, ordered pairs satisfying
\[
 |X\cap H\cap H'|\ge\eta\frac{\mathcal E_H}{h^2}
\]
carry at least $(1-\eta)\mathcal E_H$ units of coincidence energy.  In particular, there are at least
\[
 \frac{(1-\eta)\mathcal E_H}{q^{d-2}}
\]
ordered such pairs, all nonparallel.

If, more specifically,
\[
 I(X,\mathcal H)-\frac{nh}{q}
 \ge Kq^{(d-1)/2}\sqrt{nh},
 \qquad K^2q^{d-1}h\ge4n,
\]
then at least $K^2qh/4$ ordered nonparallel pairs satisfy
\begin{equation}\label{eq:hyperplane-adaptive-section}
 |X\cap H\cap H'|\ge\frac{K^2q^{d-1}}{4h}.
\end{equation}
Under the additional cap $h\le C_1Kq^{(d-1)/2}$, where $C_1\ge1$, the right-hand side in \eqref{eq:hyperplane-adaptive-section} is at least $Kq^{(d-1)/2}/(4C_1)$.
\end{theorem}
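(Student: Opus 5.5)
The plan is to repeat the coincidence-energy argument of \cref{thm:exact-adaptive} verbatim for hyperplanes, and to replace the sphere pair bound of \cref{lem:radical} by the trivial codimension-two bound. Write $d_{\mathcal H}(x)$ for the number of members of $\mathcal H$ through $x$. Expanding the square gives
\[
 \sum_{x\in X}d_{\mathcal H}(x)^2=I_H+\sum_{H\ne H'}|X\cap H\cap H'|,
\]
and Cauchy--Schwarz gives $\sum_x d_{\mathcal H}(x)^2\ge I_H^2/n$. Hence the off-diagonal energy is at least $\mathcal E_H$. Ordered pairs with $|X\cap H\cap H'|<\eta\mathcal E_H/h^2$ contribute less than $h^2\cdot\eta\mathcal E_H/h^2=\eta\mathcal E_H$ in total. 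Subtracting, the rich pairs carry at least $(1-\eta)\mathcal E_H$.

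Any contributing pair has nonempty intersection. Distinct parallel affine hyperplanes are disjoint, so every rich pair is nonparallel, and its intersection is an affine flat of codimension two, with exactly $q^{d-2}$ points. Dividing the rich energy by $q^{d-2}$ gives the count $(1-\eta)\mathcal E_H/q^{d-2}$. This gives an improvement over the sphere case: the $q^{d-2}$ codegree holds for all $d\ge2$ and needs no Witt-index argument. Nothing about $q$ odd is used, which is why any prime power is allowed.

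For the fixed-scale part, put $\Sigma=I_H-nh/q$. The hypotheses give $\Sigma^2\ge K^2q^{d-1}nh\ge4n^2$, so $\Sigma\ge2n$. Then $I_H\ge\Sigma\ge n/2$, and since $x\mapsto x^2/n-x$ is increasing for $x\ge n/2$, we get
\[
 \mathcal E_H\ge\frac{\Sigma^2}{n}-\Sigma\ge\frac{\Sigma^2}{2n}\ge\frac12K^2q^{d-1}h.
\]
Applying the first part with $\eta=1/2$ gives a threshold of at least $K^2q^{d-1}/(4h)$ and rich energy at least $K^2q^{d-1}h/4$. Dividing by $q^{d-2}$ gives at least $K^2qh/4$ ordered nonparallel pairs satisfying \eqref{eq:hyperplane-adaptive-section}. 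Under the cap $h\le C_1Kq^{(d-1)/2}$, substituting into \eqref{eq:hyperplane-adaptive-section} gives the bound $Kq^{(d-1)/2}/(4C_1)$.

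There is no real obstacle. The only points needing care are checking that $\Sigma\ge2n$ follows from the two hypotheses, and recording that parallel distinct hyperplanes contribute zero, which justifies the nonparallel conclusion and the exact codegree $q^{d-2}$.
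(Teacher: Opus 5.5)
Your proposal is correct and follows the paper's own proof: you rerun the coincidence-energy argument of \cref{thm:exact-adaptive} for hyperplanes, using that distinct parallel hyperplanes are disjoint and nonparallel pairs meet in exactly $q^{d-2}$ points. For the fixed-scale part you use the same bound $\mathcal E_H\ge\frac12K^2q^{d-1}h$ from \cref{cor:fixed-adaptive} with $\eta=1/2$, and your check that $\Sigma\ge2n$ (taking $K>0$, as the paper does implicitly) is exactly the step that bound needs.
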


\begin{proof}
The proof of \cref{thm:exact-adaptive} applies verbatim to the point--hyperplane incidence relation.  Distinct parallel hyperplanes are disjoint, while a nonparallel pair meets in a codimension-two flat of size $q^{d-2}$.  The fixed-scale consequences follow from the same estimate
\[
 \mathcal E_H\ge\frac12K^2q^{d-1}h
\]
used in \cref{cor:fixed-adaptive}, followed by the choice $\eta=1/2$.
\end{proof}

\subsection{Pin-generated systems and deficiency-sensitive pin counts}

Pinned distances and dot products are classical applications of finite-field incidence estimates; see, for example, \cite{Chapman,HIKR}.  For $p\in\mathbb F_q^d$ and nonempty sets $P,Y\subseteq\mathbb F_q^d$, define
\[
 \Delta_p^Q(P)=\{Q(x-p):x\in P\},
 \qquad
 \Pi_p(Y)=\{\langle p,x\rangle:x\in Y\},
\]
where the dot product is taken with respect to any fixed nondegenerate bilinear pairing.

Let $T$ be a set of $m$ distance pins and put
\[
 \mathcal S_T=\{S_Q(p,t):p\in T,\ t\in\Delta_p^Q(P)\},
 \qquad M_\Delta=|\mathcal S_T|.
\]
Every $x\in P$ lies on exactly one member of $\mathcal S_T$ for each pin, so
\begin{equation}\label{eq:pin-sphere-system}
 I(P,\mathcal S_T)=m|P|,
 \qquad
 \sigma(P,\mathcal S_T)=\beta_\Delta m|P|,
 \qquad
 \beta_\Delta=1-\frac{M_\Delta}{mq}.
\end{equation}
The exact fixed-scale parameter is
\begin{equation}\label{eq:Kdelta}
 K_\Delta
 =\frac{\beta_\Delta}{\sqrt{1-\beta_\Delta}}
 \frac{\sqrt{m|P|}}{q^{d/2}}.
\end{equation}

For dot products, let $T$ be a set of pairwise nonproportional nonzero pins and define
\[
 H_{p,t}=\{x:\langle p,x\rangle=t\},
 \qquad
 \mathcal H_T=\{H_{p,t}:p\in T,\ t\in\Pi_p(Y)\}.
\]
All these hyperplanes are distinct.  With $M_\Pi=|\mathcal H_T|$ and $\beta_\Pi=1-M_\Pi/(mq)$, the analogues of \eqref{eq:pin-sphere-system} and \eqref{eq:Kdelta} hold with $(P,\Delta)$ replaced by $(Y,\Pi)$.

\begin{proposition}[Deficiency-sensitive pin-count bounds]\label{prop:pin-counts}
Let $d\ge2$ and $0<\alpha\le1$.
\begin{enumerate}[label=\textup{(\arabic*)}]
\item Let $q$ be odd and let $P\subseteq\mathbb F_q^d$ be nonempty.  The number of pins $p\in\mathbb F_q^d$ satisfying
\[
 |\Delta_p^Q(P)|\le(1-\alpha)q
\]
is at most
\begin{equation}\label{eq:distance-pin-count}
 \frac{(1-\alpha)(q^{d+1}-q^d)}{\alpha^2|P|}.
\end{equation}
\item Let $q$ be any prime power, let $Y\subseteq\mathbb F_q^d$ be nonempty, and fix a nondegenerate bilinear pairing on $\mathbb F_q^d$.  Every set of pairwise nonproportional nonzero pins satisfying
\[
 |\Pi_p(Y)|\le(1-\alpha)q
\]
has cardinality at most
\begin{equation}\label{eq:dot-pin-count}
 \frac{(1-\alpha)(q^d-|Y|)}{\alpha^2|Y|}.
\end{equation}
\end{enumerate}
\end{proposition}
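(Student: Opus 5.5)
The plan is to feed the pin-generated systems of \eqref{eq:pin-sphere-system} into the exact complement-sensitive design inequalities: \eqref{eq:complement-sphere} for distances and \eqref{eq:hyperplane-complement} for dot products. Then solve the resulting quadratic inequality for the number $m$ of pins. No extraction is needed, only Cauchy--Schwarz in the centered design.

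\emph{Part (1).} Let $T$ be the set of pins with $|\Delta_p^Q(P)|\le(1-\alpha)q$, put $m=|T|$ and $n=|P|$, and assume $m\ge1$.
\begin{itemize}
\item The spheres $S_Q(p,t)$ with $p\in T$ and $t\in\Delta_p^Q(P)$ are distinct as parameter pairs, so $M_\Delta=\sum_{p\in T}|\Delta_p^Q(P)|\le(1-\alpha)mq$. Hence $\beta_\Delta\ge\alpha>0$.
\item Each $x\in P$ lies on exactly one member of $\mathcal S_T$ for each pin, namely $S_Q(p,Q(x-p))$. This gives $\sigma(P,\mathcal S_T)=\beta_\Delta mn$, as in \eqref{eq:pin-sphere-system}.
\item Apply \eqref{eq:complement-sphere} and drop the factor $1-M_\Delta/q^{d+1}\le1$. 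Using $M_\Delta=(1-\beta_\Delta)mq$, we get
\[
 \beta_\Delta^2m^2n^2\le(q^d-q^{d-1})\,n\,(1-\beta_\Delta)mq,
\]
that is, $m\le(1-\beta_\Delta)(q^{d+1}-q^d)/(\beta_\Delta^2n)$.
\item The function $\beta\mapsto(1-\beta)/\beta^2$ is decreasing on $(0,1]$ and $\beta_\Delta\ge\alpha$. Substituting $\beta_\Delta=\alpha$ in the worst case yields \eqref{eq:distance-pin-count}.
\end{itemize}

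\emph{Part (2).} The argument is the same with the affine-hyperplane design of \cref{lem:hyperplane-design}, which holds over any prime power. Let $T$ be a set of pairwise nonproportional nonzero pins with $|\Pi_p(Y)|\le(1-\alpha)q$, and put $m=|T|$.
\begin{itemize}
\item The hyperplanes $H_{p,t}$ are distinct. For a fixed pin, different $t$ give parallel, disjoint hyperplanes. For different pins, nonproportionality makes the normal directions differ.
\item Hence $M_\Pi\le(1-\alpha)mq$ and $\beta_\Pi\ge\alpha$. Each $y\in Y$ lies on exactly one $H_{p,t}$ per pin, so $\sigma(Y,\mathcal H_T)=\beta_\Pi m|Y|$.
\item Apply \eqref{eq:hyperplane-complement}, dropping the complement factor:
\[
 \beta_\Pi^2m^2|Y|^2\le\Bigl(q^{d-1}|Y|-\frac{|Y|^2}{q}\Bigr)(1-\beta_\Pi)mq=|Y|(q^d-|Y|)(1-\beta_\Pi)m.
\]
\item This gives $m\le(1-\beta_\Pi)(q^d-|Y|)/(\beta_\Pi^2|Y|)$. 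The same monotonicity in $\beta$ yields \eqref{eq:dot-pin-count}.
\end{itemize}

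The only point needing care is the bookkeeping behind the two uses of the design inequalities. First, the families must really consist of distinct spheres or hyperplanes, because the design inequalities are stated for subsets of $\Omega$ and $\Omega_{\mathcal H}$. Second, we need $0<\alpha\le\beta<1$, so that dividing by $\beta^2$ is legitimate and the monotone substitution $\beta\to\alpha$ goes in the right direction. Here $\beta<1$ holds because $M\ge m$ when the point set is nonempty. Beyond these checks, I expect no real obstacle; the rest is algebra.
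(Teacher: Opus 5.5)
Your proposal is correct and matches the paper's proof: feed the pin-generated systems into \eqref{eq:complement-sphere} and \eqref{eq:hyperplane-complement}, drop the complement factor, and solve for $m$. The only cosmetic difference is that you keep the exact $\beta$ and then use monotonicity of $(1-\beta)/\beta^2$. The paper instead inserts the separate bounds $\sigma\ge\alpha m|P|$ and $M\le(1-\alpha)mq$ directly into the squared inequality, which gives the same result in one step.
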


\begin{proof}
For distance pins, let $T$ be the deficient set and $m=|T|$.  If $m=0$ there is nothing to prove, so assume $m>0$.  Then $M_\Delta\le(1-\alpha)mq$ and $\sigma(P,\mathcal S_T)\ge\alpha m|P|$.  Squaring \eqref{eq:complement-sphere}, dropping only its final complement factor, and using the bound on $M_\Delta$ gives
\[
 \alpha^2m^2|P|^2
 \le(q^d-q^{d-1})|P|(1-\alpha)mq,
\]
which is \eqref{eq:distance-pin-count}.

For dot products, the claim is trivial when the chosen pin set is empty; otherwise use \eqref{eq:hyperplane-complement}.  Since $M_\Pi\le(1-\alpha)mq$,
\[
 \alpha^2m^2|Y|^2
 \le\left(q^{d-1}|Y|-\frac{|Y|^2}{q}\right)(1-\alpha)mq.
\]
After cancellation this is \eqref{eq:dot-pin-count}.
\end{proof}

\subsection{Deficient pinned distances}

\begin{theorem}[Deficient pinned distances force rich common sections]\label{thm:pinned-distance}
Let $q$ be odd, let $d\ge2$, let $P\subseteq\mathbb F_q^d$ be nonempty and let $T\subseteq\mathbb F_q^d$ be a set of $m\ge2$ pins.  Suppose
\[
 |\Delta_p^Q(P)|\le(1-\alpha)q
 \qquad(p\in T)
\]
for some $0<\alpha<1$, and suppose that for some $C_1\ge1$,
\begin{equation}\label{eq:pinned-upper-condition}
 (1-\alpha)^3mq^3
 \le C_1^2\alpha^2|P|.
\end{equation}
Then there exist distinct pins $p,p'\in T$ and attained radii $t\in\Delta_p^Q(P)$, $t'\in\Delta_{p'}^Q(P)$ for which
\begin{equation}\label{eq:pinned-distance-section}
 |P\cap S_Q(p,t)\cap S_Q(p',t')|
 \ge
 \frac{\alpha}{8C_1\sqrt{1-\alpha}}
 \sqrt{\frac{m|P|}{q}}.
\end{equation}
The same lower bound holds for the corresponding radical hyperplane.  Moreover, at least
\begin{equation}\label{eq:pinned-distance-count}
 \frac{3|P|m(m-1)}{4q^{d-1}}
\end{equation}
ordered choices $((p,t),(p',t'))$ satisfy \eqref{eq:pinned-distance-section}; if $d\ge3$, the lower bound improves to
\begin{equation}\label{eq:pinned-distance-count-d3}
 \frac{3|P|m(m-1)}{8q^{d-2}}.
\end{equation}
\end{theorem}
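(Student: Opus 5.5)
The plan is to apply \cref{thm:exact-adaptive} directly to the pin-generated system $(P,\mathcal S_T)$ with $\eta=1/4$, and to exploit the fact that every point has the same degree. Put $n=|P|$. By \eqref{eq:pin-sphere-system}, every $x\in P$ lies on exactly one sphere of $\mathcal S_T$ for each pin, so $d_{\mathcal S_T}(x)=m$ and $I=mn$. Hence
\[
 \mathcal E_0=\frac{I^2}{n}-I=m(m-1)n>0,
\]
using $m\ge2$. In this case $\mathcal E_0$ in fact equals $E_{\mathrm{off}}(P,\mathcal S_T)$ exactly. Two spheres centered at the same pin are concentric, so by \cref{lem:radical} they are disjoint and contribute nothing. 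Therefore every pair with a nonempty section automatically comes from distinct pins $p\ne p'$, and its radii are attained because the spheres lie in $\mathcal S_T$.

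Next I will bound the threshold $\lambda_{1/4}=\mathcal E_0/(4M_\Delta^2)$ from below. The deficiency hypothesis gives $M_\Delta\le(1-\alpha)mq$, and $(m-1)/m\ge1/2$. Together these yield
\[
 \lambda_{1/4}\ge\frac{m(m-1)n}{4(1-\alpha)^2m^2q^2}\ge\frac{n}{8(1-\alpha)^2q^2}.
\]
It then remains to compare this with the right side of \eqref{eq:pinned-distance-section}. After squaring, the required inequality is
\[
 C_1^2n\ge\alpha^2(1-\alpha)^3mq^3.
\]
Hypothesis \eqref{eq:pinned-upper-condition} gives
\[
 C_1^2n\ge(1-\alpha)^3mq^3/\alpha^2,
\]
which is stronger because $0<\alpha<1$. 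So every pair in $\mathcal R_{1/4}$ satisfies \eqref{eq:pinned-distance-section}. This comparison is the only step where the hypotheses must be matched carefully, and I expect it to be the main point to check; once the squaring is written out it is routine.

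Finally, \eqref{eq:exact-rich-energy} shows that the rich pairs carry at least $\tfrac34m(m-1)n$ units of coincidence energy. Nonemptiness of the rich family gives the existence claim, and the radical-hyperplane statement follows from \cref{lem:radical}. For the counts, I divide this rich energy by the pair-codegree bounds $q^{d-1}$ and, when $d\ge3$, $2q^{d-2}$ from \cref{lem:radical}. This is exactly \eqref{eq:exact-rich-count} and \eqref{eq:exact-rich-count-d3} with $\eta=1/4$, and it yields \eqref{eq:pinned-distance-count} and \eqref{eq:pinned-distance-count-d3}.
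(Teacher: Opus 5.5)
Your proposal is correct and is essentially the paper's proof. Both use the same ingredients: constant degree $m$ gives $E_{\mathrm{off}}(P,\mathcal S_T)=|P|m(m-1)$, pairs below the threshold carry at most a quarter of it, concentric pairs are disjoint, and the counts come from dividing the rich energy by the codegree bounds of \cref{lem:radical}. The only difference is bookkeeping: the paper fixes the threshold at the target value $K_\Delta q^{(d-1)/2}/(8C_1)$ and uses the hypothesis in the cap form $M_\Delta\le C_1K_\Delta q^{(d-1)/2}$, whereas you take $\lambda_{1/4}$ from \cref{thm:exact-adaptive} and compare it with the target directly, which incidentally shows that $\alpha^2(1-\alpha)^3mq^3\le C_1^2|P|$ would already suffice.
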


\begin{proof}
Use the notation in \eqref{eq:pin-sphere-system}.  Since $\beta_\Delta\ge\alpha$ and $(1-x)^3/x^2$ decreases on $(0,1)$, condition \eqref{eq:pinned-upper-condition} implies
\[
 M_\Delta\le C_1K_\Delta q^{(d-1)/2}.
\]
Put $\lambda=K_\Delta q^{(d-1)/2}/(8C_1)$.  Every point of $P$ has degree exactly $m$ in the generated sphere system.  Hence
\begin{equation}\label{eq:pin-exact-off}
 E_{\mathrm{off}}(P,\mathcal S_T)=|P|m(m-1).
\end{equation}
Pairs contributing less than $\lambda$ carry at most
\[
 \lambda M_\Delta^2
 \le\frac18K_\Delta^2q^{d-1}M_\Delta
 =\frac18\beta_\Delta^2m^2|P|
 \le\frac14|P|m(m-1),
\]
where the final inequality uses $m\ge2$.  Thus rich pairs carry at least three quarters of \eqref{eq:pin-exact-off}.  A contributing pair cannot come from the same pin, because distinct concentric spheres are disjoint.  The size bound \eqref{eq:pinned-distance-section} follows from \eqref{eq:Kdelta} and the monotonicity of $x/\sqrt{1-x}$.  Dividing the rich energy by $q^{d-1}$ gives \eqref{eq:pinned-distance-count}; for $d\ge3$, divide by $2q^{d-2}$ and use \cref{lem:radical}.
\end{proof}

\subsection{Deficient pinned dot products}

\begin{theorem}[Deficient pinned dot products force rich codimension-two sections]\label{thm:pinned-dot}
Let $q$ be any prime power, let $d\ge2$, let $Y\subseteq\mathbb F_q^d$ be nonempty, and let $T$ be a set of $m\ge2$ pairwise nonproportional nonzero pins.  Suppose
\[
 |\Pi_p(Y)|\le(1-\alpha)q
 \qquad(p\in T)
\]
for some $0<\alpha<1$ and $C_1\ge1$, and suppose
\begin{equation}\label{eq:pinned-dot-upper}
 (1-\alpha)^3mq^3
 \le C_1^2\alpha^2|Y|.
\end{equation}
Then there exist distinct pins $p,p'\in T$ and attained values $t\in\Pi_p(Y)$, $t'\in\Pi_{p'}(Y)$ for which
\begin{equation}\label{eq:pinned-dot-section}
 |\{x\in Y:\langle p,x\rangle=t,\ \langle p',x\rangle=t'\}|
 \ge
 \frac{\alpha}{8C_1\sqrt{1-\alpha}}
 \sqrt{\frac{m|Y|}{q}}.
\end{equation}
At least
\begin{equation}\label{eq:pinned-dot-count}
 \frac{3|Y|m(m-1)}{4q^{d-2}}
\end{equation}
ordered choices $((p,t),(p',t'))$ satisfy \eqref{eq:pinned-dot-section}.
\end{theorem}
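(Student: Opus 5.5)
The proof will mirror that of \cref{thm:pinned-distance}, with the hyperplane system $\mathcal H_T$ in place of $\mathcal S_T$ and \cref{lem:hyperplane-design}, \cref{thm:hyperplane-adaptive} in place of the sphere tools.

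\textbf{Exact structure of the pin system.} Every $x\in Y$ lies on exactly one hyperplane $H_{p,t}$ for each pin $p$, namely $t=\langle p,x\rangle$. Hence
\[
 I(Y,\mathcal H_T)=m|Y|,
 \qquad
 \sigma(Y,\mathcal H_T)=\beta_\Pi m|Y|,
 \qquad
 E_{\mathrm{off}}(Y,\mathcal H_T)=|Y|m(m-1).
\]
The exact parameter is $K_\Pi=\beta_\Pi(1-\beta_\Pi)^{-1/2}\sqrt{m|Y|}/q^{d/2}$, obtained by substituting $M_\Pi=(1-\beta_\Pi)mq$ into the fixed-scale normalization. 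The hypothesis $|\Pi_p(Y)|\le(1-\alpha)q$ gives $\beta_\Pi\ge\alpha$.

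\textbf{Cap on the family size.} The cap condition $M_\Pi\le C_1K_\Pi q^{(d-1)/2}$ squares to
\[
 (1-\beta_\Pi)^3mq^3\le C_1^2\beta_\Pi^2|Y|.
\]
Since $(1-x)^3/x^2$ is decreasing on $(0,1)$, this follows from \eqref{eq:pinned-dot-upper}.

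\textbf{Energy threshold.} Set $\lambda=K_\Pi q^{(d-1)/2}/(8C_1)$. The ordered pairs with $|Y\cap H\cap H'|<\lambda$ carry at most
\[
 \lambda M_\Pi^2\le\tfrac18K_\Pi^2q^{d-1}M_\Pi=\tfrac18\beta_\Pi^2m^2|Y|\le\tfrac14|Y|m(m-1),
\]
using $m\ge2$. Thus rich pairs carry at least $\frac34|Y|m(m-1)$ of the energy.

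\textbf{Geometry of rich pairs.} A contributing pair cannot come from the same pin, since $H_{p,t}$ and $H_{p,t'}$ are parallel and hence disjoint. Distinct pins are nonproportional, so such a pair is nonparallel and meets in a codimension-two flat of size $q^{d-2}$.

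\textbf{Size bound.} Monotonicity of $x/\sqrt{1-x}$ gives
\[
 \lambda\ge\frac{\alpha}{8C_1\sqrt{1-\alpha}}\sqrt{\frac{m|Y|}{q}},
\]
which is \eqref{eq:pinned-dot-section}.

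\textbf{Pair count.} Dividing the rich energy by $q^{d-2}$ yields \eqref{eq:pinned-dot-count}.

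The only delicate point is the monotonicity bookkeeping in the cap and size steps, which converts the hypotheses stated in $\alpha$ into the exact $\beta_\Pi$ quantities. Both functions are monotone in the needed direction, so no genuine obstacle arises. Existence of a pair of distinct pins with attained values follows from the positive count, since every hyperplane in $\mathcal H_T$ carries an attained value by construction.
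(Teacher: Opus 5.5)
Your proof is correct and follows the paper's argument step for step. Exact point degree $m$ gives $E_{\mathrm{off}}=|Y|m(m-1)$, and the hypothesis converts through monotonicity of $(1-x)^3/x^2$ into the cap $M_\Pi\le C_1K_\Pi q^{(d-1)/2}$. Sub-threshold pairs then carry at most a quarter of the energy, and rich pairs come from distinct nonproportional pins with sections of size at most $q^{d-2}$.
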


\begin{proof}
The generated hyperplane family has exact point degree $m$, so its off-diagonal energy is $|Y|m(m-1)$.  With
\[
 K_\Pi=\frac{\beta_\Pi}{\sqrt{1-\beta_\Pi}}
 \frac{\sqrt{m|Y|}}{q^{d/2}},
\]
condition \eqref{eq:pinned-dot-upper} gives $M_\Pi\le C_1K_\Pi q^{(d-1)/2}$.  Put $\lambda=K_\Pi q^{(d-1)/2}/(8C_1)$.  Exactly as in the proof of \cref{thm:pinned-distance}, pairs below $\lambda$ carry at most one quarter of the off-diagonal energy.  Every contributing pair comes from distinct pins and is nonparallel because the pins are nonproportional.  Dividing the remaining energy by $q^{d-2}$ gives \eqref{eq:pinned-dot-count}, and the threshold is at least \eqref{eq:pinned-dot-section}.
\end{proof}

\section{Discussion: a classification theorem and its sharp boundary}\label{sec:discussion}

There are two mathematically different inverse problems in this paper.  At the maximal-incidence endpoint, the problem has a genuine Freiman-type answer.  \Cref{thm:dense-freiman,cor:model-distance} show that if the missing-incidence density is $\mu=o_d(1)$, then deleting $O_d(\sqrt\mu)$ from each side produces one exact lifted-linear model, and this loss controls the natural edit distance in both directions.  \Cref{prop:linear-model-normal-form} gives the complete model list: a rank-$r$ affine system of sphere parameters, $0\le r\le d$, with common base
\[
 S_Q(c_0,r_0)\cap(z+W^{\perp_Q}).
\]
Under codimension-two nonconcentration, \cref{cor:dense-coaxal} forces $r\le1$, leaving a single coaxal pencil.  The rectangular-hole construction of \cref{prop:dense-sqrt-sharp} shows that the square-root stability exponent is best possible.  Thus every dense-endpoint near-extremizer is quantitatively close to an explicit geometric model, with the correct order of deletion loss.

At the lower fixed-radius deviation scale, the universal conclusion is necessarily different.  Once the diagonal is dominated, a positive discrepancy forces common sphere sections at the adaptive scale $K^2q^{d-1}/|\mathcal S|$.  Complete pencils attain this scale.  Under an explicit lower condition, the positive surplus can be resolved, up to a prescribed relative error, into disjoint section-supported pieces; the log-free refinement records additional incidence density on one extracted section.  These are sharp extraction and decomposition theorems, not a classification of every constant-$K$ system.  At the sharp mixed-radius design norm, \cref{thm:mixed-profile} additionally gives a complete quantitative classification of the sphere-intersection and point-degree profiles for both signs.

The reason an individual-template edit classification cannot extend to the whole fixed-scale class is not merely the existence of a few exceptional constructions.  \Cref{thm:hypergraph-universality} fixes one nonzero-radius sphere family whose lifts lie on one rational normal curve and realizes every admissible $d$-uniform hypergraph on the point side.  Dense subhypergraphs already give constant-$K$ moderate-range systems.  By \cref{cor:no-freiman-list}, this class has metric entropy $\exp(\Omega_d(q^d))$: no finite, polynomial-size, or subexponential list of individual point-set templates can approximate every such system within $o(q^d)$ point edits.  A stronger container theorem is not excluded, but its model class would have to permit arbitrary dense subhypergraphs and would therefore record substantially less of the incidence pattern.

The special obstruction systems identify rigid subfamilies inside this universal landscape.  The fixed-radius construction of \cref{thm:fixed-block} is an exact centered direct sum: all cross-block centered incidences vanish and the diagonal blocks carry the entire surplus.  In $d\ge4$ an unbounded number of such blocks fit not only inside the moderate range but also inside the family-only subrange $|\mathcal S|\ge4q/K^2$; no hyperplane or sphere-pair section is macroscopic, and one center-difference direction may still carry asymptotically all coincidence energy.  In $d=3$, \cref{cor:fixed-no-proportional} places every fixed number $t\ge3$ of exact blocks in the same family-only range once $C_1>t/\sqrt2$.

The full rational-normal-curve design is opposite in character.  Its rich-pair graph is complete, but the lifted parameters are in affine general position up to the maximum possible order.  Every pair section carries only $O_d(q^{-2})$ of the surplus, while \cref{thm:rnc-block-indecomp} rules out every $(\tau,\varepsilon)$-centered block decomposition whenever $\varepsilon\le c_d\tau^2$ and $q\ge C_d/\tau$.  The fixed-radius twisted-cubic system of \cref{thm:fixed-tc} shows that, under the point-sensitive hypotheses of \cref{cor:moderate}, even one prescribed nonzero radius does not restore an atom-versus-block alternative in dimension three.  The full diffuse examples do not satisfy $|\mathcal S|\ge4q/K^2$.  Thus a narrower dense-range question remains: in dimension three, does every non-atomic system in that family-only subrange admit an approximate centered block decomposition, or can a diffuse indecomposable branch persist there?

The point--hyperplane and pinned results demonstrate that the coincidence principle is not confined to sphere geometry.  Further applications should be possible whenever object pairs have controlled common neighborhoods and those neighborhoods possess a useful geometric classification.  Negative near-equality at the sharp mixed-radius norm is classified by \cref{thm:mixed-profile}.  Exact complement duality is recorded in \cref{prop:deficit-complement}, but \cref{rem:deficit-scope} shows that its pair-section threshold is below $1$ throughout the balanced regime and becomes structurally informative only when the omitted family is an $O(q^{-1})$ fraction of the full sphere system.  What remains open is an intrinsic inverse theory for deficits inside one prescribed fixed-radius class; that problem is tied to near-extremal Kloosterman and Sali\'e directions rather than to positive coincidence energy.

\appendix
\section{Finite verification and reproducibility}\label{app:verification}

The proofs above are symbolic and do not depend on computation.  To guard against normalization, sign, and finite-field implementation errors, the ancillary source package contains exact-arithmetic scripts with fixed random seeds.  The suite checks the identities and inequalities summarized in \cref{tab:verification}.  Prime fields are implemented by integer arithmetic modulo $p$; the extension-field tests use explicit polynomial-basis arithmetic.  The scripts and a complete transcript are included in the \texttt{anc/} directory of the source package.

\begin{table}[H]
\centering
\footnotesize
\caption{Finite verification suite.  ``Exhaustive'' means every object or assignment in the displayed small instance was enumerated; randomized tests use fixed seeds.}
\label{tab:verification}
\begin{tabularx}{\textwidth}{@{}P{0.31\textwidth}P{0.25\textwidth}Y@{}}
\toprule
Component & Fields and dimensions & Check \\
\midrule
Centered sphere and hyperplane designs & $\mathbb F_3^2,\mathbb F_3^3,\mathbb F_5^2$ & Exhaustive Gram identities, row degrees, and complement-sensitive discrepancy normalization. \\
Radical hyperplanes and coaxal completion & $\mathbb F_3^2,\mathbb F_5^3$ & Exhaustive or randomized equality of common sections along lifted parameter lines. \\
Dense-endpoint classification & Corrupted rank-one models in $\mathbb F_{17}^3$; $75$ random nondegenerate forms over $\mathbb F_7$ in $d=2,3,4$; split rectangular holes in $\mathbb F_{17}^4$ & The good-sphere deletion algorithm, affine-span rank, common base, model-distance inequalities, and square-root sharpness. \\
Scale-adaptive extraction & Small prime-field incidence systems & $912$ randomized systems with positive $\mathcal E_0$, checking the threshold, retained energy, and witness counts. \\
Fixed-radius centered blocks & $\mathbb F_5^4$, $\mathbb F_9^{3,4}$ & Exact block incidence table, constant degrees, common sections, surplus, and family-only inequalities; in $\mathbb F_5^4$, $(|P|,|\mathcal S|,I,\sigma)=(60,15,420,240)$. \\
Rational-normal-curve designs & $d=3,4,5,6$ over $\mathbb F_7,\mathbb F_{11}$; also $\mathbb F_9,\mathbb F_{25}$ & Root-evaluation incidence, pair-section counts, lift independence, and normalized $K$. \\
Hypergraph universality and entropy & $d=3$ over $\mathbb F_5,\mathbb F_7$ & Exact realization of arbitrary subhypergraphs, the formulas $I=d|\mathcal G|$ and $\sigma=(d-1)|\mathcal G|$, and exhaustive Hamming-ball counts in the smallest host. \\
Centered-block indecomposability & Root designs over $\mathbb F_7,\mathbb F_{11}$ & $45{,}000$ fixed-seed randomized matched partitions, checking the theorem's cross-discrepancy lower bound. \\
Fixed-radius twisted cubic & Several prime fields and $\mathbb F_{25}$ & Cubic-root chart, nonzero common radius, complete rich-pair graph, and pair-section counts. \\
Sharp mixed profiles and deficit duality & $\mathbb F_3^2$ and a dense $d=3$ block instance & Positive and negative equality profiles, complement identity, the omitted-family formulas, and the nontriviality diagnostic \eqref{eq:deficit-nontriviality}. \\
\bottomrule
\end{tabularx}
\end{table}

These checks are reproducibility aids rather than evidence substituted for proof.  In particular, the asymptotic obstruction theorems and the profile-stability theorem are established algebraically in the text.

\end{document}